\newtheorem{theorem}{Theorem}
\newtheorem{corollary}[theorem]{Corollary}
\newtheorem{definition}[theorem]{Definition}
\newtheorem{lemma}[theorem]{Lemma}
\newtheorem{proposition}[theorem]{Proposition}
\newtheorem{remark}[theorem]{Remark}
\newenvironment{proof}[1][Proof]{\noindent\textbf{#1.} }{\ \rule{0.5em}{0.5em}}
\title{\bf Vertical and complete lifts of sections of a (dual) vector bundle and Legendre duality}
\author{E. Peyghan, C. M. Arcu\c{s}, L. Nourmohammadifar}
\begin{document}

\maketitle

\begin{abstract}
Supplementary comments about generalized Lie algebroids are
presented and a new point of view over the construction of the Lie
algebroid generalized tangent bundle of a (dual) vector
bundle is introduced. Using the general theory of exterior
differential calculus for generalized Lie algebroids, a covariant
derivative for exterior forms of a (dual) vector bundle is
introduced. Using this covariant derivative, the complete lift of an
arbitrary section of a (dual) vector bundle is discovered. A theory of Legendre type and Legendre duality between vertical and complete lifts is presented. Finally, a duality between Lie
 algebroids structures is developed.
\footnote{ 2000 Mathematics subject Classification: 53C60, 53C25.}
\end{abstract}

\section{Introduction}
It is well-known that the lift of geometrical objects such as functions,
vector fields and 1-forms defined on the base of the usual Lie
algebroid
\[
( ( TM,\tau _{M},M) ,[ ,] _{TM},( Id_{TM},Id_{M}) ),
\]
has an important role in the geometry of the Lie algebroid%
\[
( ( TTM,\tau _{TM},TM) ,[ ,] _{TTM},( Id_{TTM},Id_{TM}) ).
\]
Using these lifts, it is possible to introduce the lift of a
(pseudo) Riemannian metric structure. The Sasaki lift of a Riemannian metric structure on $M$ is an
important example of metric structures on $TM$ used in differential geometry with many
applications in physics \cite{S}. Lift of geometrical structures of
the Lie algebroid
\[
( ( TM,\tau _{M},M) ,[ ,] _{TM},( Id_{TM},Id_{M}) ),
\]
to the Lie algebroid
\[
( ( TTM,\tau _{TM},TM) ,[ ,] _{TTM},( Id_{TTM},Id_{TM}) ),
\]
were introduced and studied by several authors \cite{OSH, O, YI}.
In many papers such as \cite{EC, PNT, SM, SW}, the
authors studied the lifts to the second order tangent bundle, tensor
bundle and jet bundle.

The Lie algebroids are important issues in physics and mechanics
since the extension of Lagrangian and Hamiltonian systems to their
entity \cite{LMM, M, P} and catching the Poisson structure
\cite{popescu0}. Several authors presented and studied the lift of
geometrical objects of a Lie algebroid $( ( F,\nu ,N) ,[ ,] _{F},(
\rho ,Id_{N}) ) $ to the Lie algebroid prolongation. Using the
vertical and complete lifts of sections of a Lie algebroid, the
first author presented important results about Lie simetry and
horizontal lifts in the general framework of prolongation Lie
algebroid \cite{Pe}.

Extending the notion of Lie algebroid from one base manifold to a
pair of diffeomorphic base manifolds, the second author introduced
the generalized Lie algebroid \cite{A0, A}. Using the lift of a
differentiable curve defined on the base of a generalized Lie
algebroid, he developed a new theory of mechanical systems with many
applications in physics \cite{A1}. The space used for developing
this theory of mechanical systems is the Lie algebroid
generalized tangent bundle%
\[
( ( ( \rho ,\eta ) TF,( \rho ,\eta ) \tau
_{F},F) ,[ ,] _{( \rho ,\eta ) TF},( \tilde{%
\rho},Id_{F}) ),
\]
of a generalized Lie algebroid $( ( F,\nu ,N) ,[ ,] _{F,h},( \rho
,\eta ) )$.

This paper is arranged as follows. Some notions and results about
exterior differential algebra of a vector bundle and information
about generalized Lie algebroids are presented in Section 1. Using a
vector bundle $( E,\pi ,M) $ anchored by a generalized Lie algebroid
$( ( F,\nu ,N) ,[ ,] _{F,F},(
\rho ,\eta ) ) $ and a vector bundle morphism $%
( g,h) $ we obtain a new point of view over construction of the Lie
algebroid generalized tangent bundle
\[
( ( ( \rho ,\eta ) TE,( \rho ,\eta ) \tau
_{E},E) ,[ ,] _{( \rho ,\eta ) TE},( \tilde{%
\rho},Id_{E}) ),
\]%
in Section 2. Using the exterior differential calculus of the
exterior algebra of the generalized Lie algebroid $ ( ( F,\nu ,N) ,[
,] _{F,h},( \rho ,\eta ) ) $ presented in \cite{A00}, in Section 3,
we introduce a Lie covariant derivative for the exterior algebra of
the vector bundle $( E,\pi
,M) .$ Using this Lie covarinat derivative, we introduce in \emph{%
Theorem 11} the complete $(g, h)$-lift $u^{c}\in \Gamma ( TE,\tau
_{E},E) $ of an arbitrary section $u\in \Gamma ( E,\pi ,M)$. Using
the complete $( g,h) $-lift of a function $f\in
\mathcal{F}(N)$ we obtain new results for vertical and complete
$(g,h)$-lifts. In the final of Section 3, we introduced the complete
and vertical $(g, h)$-lifts
\[
u^{C},u^{V}\in \Gamma ( ( \rho ,\eta ) TE,( \rho ,\eta ) \tau
_{E},E),
\]
of a section $u\in \Gamma ( E,\pi ,M) $ and important
results are presented in \emph{Theorem 18}. Also, using the dual vector bundle $( \overset{\ast }{E},\overset{\ast }{\pi }%
,M) $ anchored by a generalized Lie algebroid $( ( F,\nu ,N) ,[ ,]
_{F,h},( \rho ,\eta ) ) $ and a vector bundle morphism
$( \overset{\ast }{g},h)$, in Section 4, we obtain a new point of
view over construction of the Lie algebroid generalized tangent
bundle
\[
( ( ( \rho ,\eta ) T\overset{\ast }{E},( \rho
,\eta ) \tau _{\overset{\ast }{E}},\overset{\ast }{E}) ,[ ,%
] _{( \rho ,\eta ) T\overset{\ast }{E}},( \overset{%
\ast }{\tilde{\rho}},Id_{\overset{\ast }{E}}) ).
\]
A dual theory for the vertical and complete lifts is presented in
Section 5 and similar results are obtained. A general presentation of Lagrange (Finsler) and Hamilton
(Cartan) fundamental functions and a theory of Legendre type are
presented in Section 6. Using the tangent $\left( \rho ,\eta \right)
$-application of the Legendre bundle morphism associated to a
Lagrange respectively Hamilton fundamental function, we obtain new
results about duality between vertical and complete $(g, h)$-lifts in Section 7. New results about duality between Lie
algebroids structures and the Legendre $( \rho ,\eta) $%
-equivalence between the vector bundle $( E,\pi ,M) $ and its dual
$( \overset{\ast }{E},\overset{\ast }{\pi },M) $ are presented in
Section 8.
\section{Preliminaries}
Let $(E,\pi ,M)$ be an arbitrary vector bundle. If  $\Gamma (E,\pi ,M)$ is the set of the sections of the vector bundle $%
(E,\pi ,M)$ and $\mathcal{F}(M)$ is the set of differentiable
real-valued functions on $M$, then $(\Gamma (E,\pi ,M),+,\cdot )$ is
a $\mathcal{F}(M)$-module.

For any $q\in \mathbb{N}$ we denote by $( \Sigma _{q},\circ ) $ the
permutations group of the set $\{ 1,2,...,q\}$. We denoted by
$\Lambda ^{q}(E,\pi ,M)$ the set of $q$-linear applications
\begin{equation*}
\begin{array}{ccc}
\Gamma (E,\pi ,M)^{q} & ^{\underrightarrow{\ \ \omega \ \ }} & \mathcal{F}%
( M),\\
( z_{1},\cdot,z_{q}) & \longmapsto & \omega ( z_{1},\cdot,z_{q}),
\end{array}%
\end{equation*}%
such that
\begin{equation*}
\begin{array}{c}
\omega ( z_{\sigma ( 1) },\cdot,z_{\sigma ( q) }) =sgn( \sigma )
\cdot \omega ( z_{1},\cdot,z_{q}),
\end{array}%
\end{equation*}%
for any $z_{1},...,z_{q}\in \Gamma (E,\pi ,M)$ and for any $\sigma
\in \Sigma _{q}$. The elements of $\Lambda ^{q}(E,\pi ,M)$ will be
called \emph{differential forms of degree }$q$ or \emph{differential
}$q$\emph{-forms}. It is known that $( \Lambda ^{q}(E,\pi ,M),+,\cdot ) $\ is a $\mathcal{F}%
( M) $-module \cite{A00}.

\begin{definition}
If $\omega \in \Lambda ^{q}(E,\pi ,M)$ and $\theta \in \Lambda
^{r}(E,\pi ,M) $, then the $( q+r) $-form $\omega \wedge \theta $
defined by
\begin{align*}
\omega \wedge \theta ( u_{1},...,u_{q+r})&=\underset{\underset{\sigma ( q+1) <...<\sigma ( q+r)}{\sigma ( 1) <...<\sigma ( q)}}{\sum}{sgn(
\sigma )} \omega ( u_{\sigma ( 1) },...,u_{\sigma
( q) }) \theta ( u_{\sigma ( q+1)
},...,u_{\sigma ( q+r) })\nonumber\\
&=\frac{1}{q!r!}\underset{\sigma \in \Sigma _{q+r}}{\sum}
sgn( \sigma ) \omega ( u_{\sigma ( 1)
},...,u_{\sigma ( q) }) \theta ( u_{\sigma (
q+1) },...,u_{\sigma ( q+r) }),
\end{align*}
for any $u_{1},...,u_{q+r}\in \Gamma (E,\pi ,M),$ will be called
{the exterior product of the forms }$\omega $ \emph{and}~$\theta .$
\end{definition}
Using the previous definition, we obtain
\begin{theorem}
Let $\omega, \sigma \in \Lambda ^{q}(E,\pi ,M)$, $%
\theta \in \Lambda ^{r}(E,\pi ,M)$, $\eta, \xi\in \Lambda ^{s}(E,\pi
,M)$ and $f\in\mathcal{F}(M)$. Then
\begin{align*}
&\omega \wedge \theta=( -1) ^{qr}\theta \wedge \omega,\ \ \ \ (
\omega \wedge \theta ) \wedge \eta =\omega
\wedge ( \theta \wedge \eta ),\\
&( \omega +\sigma ) \wedge \eta=\omega \wedge \eta
+\sigma\wedge\eta,\ \ \omega \wedge (\eta+\xi ) =\omega
\wedge \eta+\omega \wedge \xi,\\
&( f\omega ) \wedge \theta =f( \omega \wedge \theta )=\omega \wedge
( f\theta ) .
\end{align*}
\end{theorem}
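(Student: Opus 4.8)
The plan is to verify the five identities directly from the definition, disposing of the three easy families first and reserving the real work for associativity. The two distributivity laws $(\omega+\sigma)\wedge\eta=\omega\wedge\eta+\sigma\wedge\eta$ and $\omega\wedge(\eta+\xi)=\omega\wedge\eta+\omega\wedge\xi$ are immediate, since in the defining sum the first factor (respectively the second) enters linearly, so replacing it by a sum of two forms splits the sum termwise. Likewise the scalar identity $(f\omega)\wedge\theta=f(\omega\wedge\theta)=\omega\wedge(f\theta)$ follows because $(f\omega)(z_1,\ldots,z_q)=f\cdot\omega(z_1,\ldots,z_q)$, so $f\in\mathcal{F}(M)$ factors out of every summand; the fact that $f$ may equally be attached to $\theta$ rests only on the commutativity of the ring $\mathcal{F}(M)$.

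For the graded anticommutativity $\omega\wedge\theta=(-1)^{qr}\theta\wedge\omega$ I would use the normalized form $\frac{1}{q!r!}\sum_{\pi\in\Sigma_{q+r}}$ of the definition. Writing out $\theta\wedge\omega$, the idea is to reindex the summation by the block-transposition $\tau\in\Sigma_{q+r}$ sending $i\mapsto i+r$ for $1\le i\le q$ and $i\mapsto i-q$ for $q<i\le q+r$, i.e.\ the permutation interchanging the first $q$ slots with the last $r$ slots. Substituting $\pi\mapsto\pi\circ\tau$ realigns the arguments of $\omega$ and $\theta$ into the order in which they occur in $\omega\wedge\theta$, while the sign contributes a factor $sgn(\tau)=(-1)^{qr}$, since moving a block of length $q$ past a block of length $r$ costs exactly $qr$ transpositions. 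As $\pi\mapsto\pi\circ\tau$ is a bijection of $\Sigma_{q+r}$, the reindexed sum equals $(-1)^{qr}(\omega\wedge\theta)$, which is the claim.

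The heart of the theorem is associativity, and here the plan is to prove the stronger statement that both $(\omega\wedge\theta)\wedge\eta$ and $\omega\wedge(\theta\wedge\eta)$ coincide with the single symmetric triple sum
\[
\frac{1}{q!r!s!}\sum_{\rho\in\Sigma_{q+r+s}}sgn(\rho)\,\omega(u_{\rho(1)},\ldots,u_{\rho(q)})\,\theta(u_{\rho(q+1)},\ldots,u_{\rho(q+r)})\,\eta(u_{\rho(q+r+1)},\ldots,u_{\rho(q+r+s)}).
\]
To identify $(\omega\wedge\theta)\wedge\eta$ with this expression I would start from the $\frac{1}{(q+r)!s!}$ form of the outer product and substitute the $\frac{1}{q!r!}$ form of $\omega\wedge\theta$ into it. After extending each inner permutation $\lambda\in\Sigma_{q+r}$ to an element $\tilde{\lambda}\in\Sigma_{q+r+s}$ fixing the last $s$ slots, the composite $\pi\circ\tilde{\lambda}$, where $\pi\in\Sigma_{q+r+s}$ indexes the outer sum, becomes a single permutation with $sgn(\pi\circ\tilde{\lambda})=sgn(\pi)sgn(\lambda)$; for each fixed $\lambda$ the map $\pi\mapsto\pi\circ\tilde{\lambda}$ runs over all of $\Sigma_{q+r+s}$, so the summand becomes independent of $\lambda$ and the sum over $\lambda$ merely contributes a factor $(q+r)!$ that cancels the normalization, leaving precisely the triple sum. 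The analogous computation applied to $\omega\wedge(\theta\wedge\eta)$ yields the same result, proving $(\omega\wedge\theta)\wedge\eta=\omega\wedge(\theta\wedge\eta)$. I expect this reindexing—correctly extending the inner permutation, checking that the sign is multiplicative, and recognizing that the overcounting factor exactly cancels the combinatorial normalization—to be the main obstacle; everything else is routine bookkeeping.
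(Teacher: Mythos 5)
Your proof is correct: the linearity and scalar identities do follow termwise, the block transposition with $sgn(\tau)=(-1)^{qr}$ gives graded commutativity, and the reduction of both bracketings to the symmetric triple sum $\tfrac{1}{q!r!s!}\sum_{\rho\in\Sigma_{q+r+s}}$ via the reindexing $\pi\mapsto\pi\circ\tilde{\lambda}$ is the standard and complete argument for associativity. The paper itself offers no proof of this theorem (it is stated with only the remark that it follows from the definition), so there is nothing to compare against; your argument supplies exactly the omitted verification.
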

We set
\begin{equation*}
\Lambda (E,\pi ,M):=\underset{q\geq 0}{\oplus }\Lambda ^{q}(E,\pi
,M).
\end{equation*}
Then it is easy to see that $( \Lambda (E,\pi ,M),+,\cdot ,\wedge )
$ is a $\mathcal{F}( M) $-algebra. This algebra will be called
\emph{the exterior differential algebra of the vector bundle
}$(E,\pi ,M)$.

Now let $(F,\nu ,N)$ be an another vector bundle and $(\varphi
,\varphi _{0})$ is
a vector bundles morphism from $(E,\pi ,M)$ to $(F,\nu ,N)$ such that $%
\varphi _{0}$ is a isomorphism from $M$ to $N$. Then using the
operation
\begin{equation*}
\begin{array}{ccc}
\mathcal{F}( M) \times \Gamma ( F,\nu ,N) & ^{%
\underrightarrow{~\ \ \cdot ~\ \ }} & \Gamma ( F,\nu ,N) , \\
( f,z) & \longmapsto & f\circ \varphi _{0}^{-1}\cdot z,%
\end{array}%
\end{equation*}%
it results that $(\Gamma (F,\nu ,N),+,\cdot )$ is a $\mathcal{F}(M)$-module
and we obtain the modules morphism
\begin{equation*}
\begin{array}{ccc}
\Gamma (E,\pi ,M) & ^{\underrightarrow{~\ \ \Gamma \left( \varphi ,\varphi
_{0}\right) ~\ \ }} & \Gamma \left( F,\nu ,N\right) , \\
u=u^{a}s_{a} & \longmapsto & \Gamma \left( \varphi ,\varphi _{0}\right) u,%
\end{array}%
\end{equation*}%
defined by
\begin{equation*}\label{*}
\begin{array}{c}
\Gamma \left( \varphi ,\varphi _{0}\right) u=\left( u^{a}\circ \varphi
_{0}^{-1}\right) \left( \varphi _{a}^{\alpha }\circ \varphi _{0}^{-1}\right)
t_{\alpha }.%
\end{array}%
\end{equation*}
\begin{definition}
Let $(\varphi ,\varphi _{0})$ be a vector bundles morphism from
$(E,\pi ,M)$ to $(F,\nu ,N)$ such that $\varphi _{0}$ is a
isomorphism from $M$ to $N$. Then we define the pull-back
application
\begin{equation*}
\begin{array}{ccc}
\Lambda ^{q}\left( F,\nu ,N\right) & ^{\underrightarrow{\ \left( \varphi
,\varphi _{0}\right) ^{\ast }\ }} & \Lambda ^{q}(E,\pi ,M),\\
\omega & \longmapsto & \left( \varphi ,\varphi _{0}\right) ^{\ast }\omega,%
\end{array}%
\end{equation*}%
by
\begin{equation*}
\begin{array}{c}
\left( \left( \varphi ,\varphi _{0}\right) ^{\ast }\omega \right) \left(
u_{1},...,u_{q}\right) =\omega \left( \Gamma \left( \varphi ,\varphi
_{0}\right) \left( u_{1}\right) ,...,\Gamma \left( \varphi ,\varphi
_{0}\right) \left( u_{q}\right) \right) ,%
\end{array}%
\end{equation*}%
for any $u_{1},...,u_{q}\in \Gamma (E,\pi ,M).$ If $f\in
\mathcal{F}( N) ,$ then $( \varphi ,\varphi _{0}) ^{\ast }( f)
=f\circ \varphi _{0}.$
\end{definition}
\begin{remark}
If $(\rho ,\eta )$ and $( Th,h) $ are two vector bundles
morphisms given by the diagrams%
\begin{equation*}
\begin{array}[b]{ccccc}
F & ^{\underrightarrow{~\ \ \ \rho \ \ \ \ }} & TM & ^{\underrightarrow{~\ \
\ Th\ \ \ \ }} & TN \\
~\downarrow \nu  &  & ~\ \ \downarrow \tau _{M} &  & ~\ \ \downarrow \tau
_{N} \\
N & ^{\underrightarrow{~\ \ \ \eta ~\ \ }} & M &
^{\underrightarrow{~\ \ \ h~\ \ }} & N
\end{array}
,
\end{equation*}
then we obtain the modules morphism $\Gamma ( Th\circ \rho
,h\circ \eta ) $ different by $\Gamma ( Th,h) \circ \Gamma ( \rho
,\eta ) $ defined by
\begin{equation*}
\begin{array}{c}
\Gamma ( Th\circ \rho ,h\circ \eta ) ( z^{\alpha }t_{\alpha
}) ( f) =z^{\alpha }\rho _{\alpha }^{i}( \frac{%
\partial ( f\circ h) }{\partial x^{i}}) \circ h^{-1},%
\end{array}%
\end{equation*}%
for any $z^{\alpha }t_{\alpha }\in \Gamma ( F,\nu ,N) $ and $f\in
\mathcal{F}( N) .$
\end{remark}
\begin{definition}
A generalized Lie algebroid is a vector bundle $(F,\nu ,N)$ given by the
diagrams:
\begin{equation*}
\begin{array}{c}
\begin{array}[b]{ccccc}
( F,[ ,] _{F,h}) & ^{\underrightarrow{~\ \ \ \rho \ \ \ \ }} & (
TM,[ ,] _{TM}) & ^{\underrightarrow{~\ \ \
Th\ \ \ \ }} & ( TN,[ ,] _{TN}) \\
~\downarrow \nu &  & ~\ \ \downarrow \tau _{M} &  & ~\ \ \downarrow \tau _{N}
\ \ \ \ \ \ \ ,\\
N & ^{\underrightarrow{~\ \ \ \eta ~\ \ }} & M & ^{\underrightarrow{~\ \ \
h~\ \ }} & N%
\end{array}%
\end{array}%
\end{equation*}%
where $h$ and $\eta $ are arbitrary isomorphisms, $(\rho ,\eta )$ is a
vector bundles morphism from $(F,\nu ,N)$ to $(TM,\tau _{M},M)$ and the
operation
\begin{equation*}
\begin{array}{ccc}
\Gamma ( F,\nu ,N) \times \Gamma ( F,\nu ,N) & ^{%
\underrightarrow{~\ \ [ ,] _{F,h}~\ \ }} & \Gamma ( F,\nu
,N) , \\
( u,v) & \longmapsto & \ [ u,v] _{F,h},%
\end{array}%
\end{equation*}%
satisfies in
\begin{equation*}
\begin{array}{c}
[ u,f\cdot v] _{F,h}=f[ u,v] _{F,h}+\Gamma ( Th\circ \rho ,h\circ
\eta ) ( u) f\cdot v,\ \ \ \forall
f\in \mathcal{F}(N),%
\end{array}%
\end{equation*}%
such that the 4-tuple $(\Gamma (F,\nu ,N),+,\cdot ,[,]_{F,h})$ is a Lie $%
\mathcal{F}(N)$-algebra.
\end{definition}
We denote by $((F, \nu, N), [ , ] _{F,h}, (\rho, \eta))$ the
generalized Lie algebroid defined in the above. Moreover, the couple $(%
\lbrack\ , ] _{F,h}, (\rho, \eta))$ is called the
\emph{generalized Lie algebroid structure.} In particular, if $\eta
=Id_{M}=h,$ then we obtain the definition of Lie algebroid. So, any
Lie algebroid can be regarded as a generalized Lie algebroid.
\begin{remark}\label{R1}
The modules morphism $\Gamma \left( Th\circ \rho ,h\circ \eta
\right) $ is
Lie algebras morphism from $(\Gamma (F,\nu ,N),+,\cdot ,[,]_{F,h})$ to $%
(\Gamma (TN,\tau _{N},N),+,\cdot ,[,]_{TN})$.
\end{remark}
If we take local coordinates $(x^{i})$ and $(\chi
^{\tilde{\imath}})$ on open sets $V\subset M$ and $W\subset N$,
respectively, then we have the
corresponding local coordinates $(x^{i},y^{i})$ and $(\chi ^{\tilde{\imath}%
},z^{\tilde{\imath}})$ on $TM$ and $TN$, respectively, where $i,\tilde{\imath%
}\in \{1,\ldots ,m\}$. Moreover, we consider $(\chi ^{\tilde{%
\imath}},z^{\alpha })$ as the local coordinates on $F$, where $\alpha \in \{1,\ldots ,p\}$.
If $\{t_\alpha\}$ is a local basis for module of sections of $(F,\nu,N)$, then we put $[t_{\alpha },t_{\beta }]_{F,h}=L_{\alpha \beta }^{\gamma
}t_{\gamma }$, where $L_{\alpha \beta }^{\gamma }$ are local
functions on $N$
and $\alpha ,\beta ,\gamma \in \{1,\ldots ,p\}$. It is easy to see that $%
L_{\alpha \beta }^{\gamma }=-L_{\beta \alpha }^{\gamma }$. Using
Remark \ref{R1}, we obtain
\begin{equation}
\begin{array}{c}
\displaystyle( L_{\alpha \beta }^{\gamma }\circ h)( \rho
_{\gamma }^{k}\circ h) =( \rho _{\alpha }^{i}\circ h) \frac{%
\partial( \rho _{\beta }^{k}\circ h) }{\partial x^{i}}-(
\rho _{\beta }^{j}\circ h) \frac{\partial( \rho _{\alpha
}^{k}\circ h) }{\partial x^{j}}.%
\end{array}
\label{P1}
\end{equation}%
The local functions $L_{\alpha \beta }^{\gamma }$ introduced in the
above are called the {\textit{structure functions}} of the
generalized Lie algebroid $((F,\nu ,N),[,]_{F,h},(\rho ,\eta
))$.

A morphism from $(( F,\nu ,N), [ , ] _{F,h}, (\rho, \eta)) $
to $(( F^{\prime }, \nu ^{\prime }, N^{\prime }), [ , ]
_{F^{\prime }, h^{\prime }}, (\rho ^{\prime }, \eta ^{\prime }))
$ is a morphism $(\varphi ,\varphi _{0})$ from $(F,\nu, N)$ to
$(F^{\prime },\nu^{\prime }, N^{\prime })$ such that $\varphi _{0}$
is an isomorphism from $N$ to $N^{\prime }$, and the modules
morphism $\Gamma(\varphi, \varphi _{0})$ is a Lie algebras morphism
from $( \Gamma( F,\nu ,N) ,+,\cdot, [ ,]
_{F,h}) $ to $( \Gamma ( F^{\prime },\nu ^{\prime
},N^{\prime }) ,+,\cdot , [ ,] _{F^{\prime
},h^{\prime }}). $ Thus, we can discuss about the category of
generalized Lie algebroids.

\subsection{The generalized tangent bundle of a vector
bundle}
We consider the following diagrams:
\begin{equation}\label{diag}
\begin{array}{ccccccc}
E & ^{\underrightarrow{~\ \ g~\ \ }} & ( F,[ ,] _{F,h})
& ^{\underrightarrow{~\ \ \rho ~\ \ }} & TM & ^{\underrightarrow{~\ \ Th~\ \
}} & ( TN,[ ,] _{TN}) \\
~\downarrow \pi &  & \downarrow \nu &  & ~\ \ \downarrow \tau _{M} &  & ~\ \
\downarrow \tau _{N}\ \ \ \ \ \ \ \ \ \ , \\
M & ^{\underrightarrow{~\ \ h~\ \ }} & N & ^{\underrightarrow{~\ \ \eta ~\ \
}} & M & ^{\underrightarrow{~\ \ h~\ \ }} & N%
\end{array}%
\end{equation}%
where $(( F,\nu ,N) ,[ ,] _{F,h},( \rho ,\eta ) )$ is a generalized
Lie algebroid, $( E,\pi ,M) $ is a vector bundle and $( g,h) $ is a
vector bundle morphism from $( E,\pi ,M) $ to $( F,\nu ,N) $ with
components $g_{b}^{\alpha },~\alpha \in \{ 1,2,\cdots,n\} $ and
$b\in \{ 1,2,\cdots,r\} $.

If $v=v^{\alpha }t_{\alpha }$ is a section of $(F,\nu ,N)$, then
we define its corresponding section $X=X^{\alpha }T_{\alpha }$ in
the pull-back vector bundle $( ( h\circ \pi ) ^{\ast }F,( h\circ
\pi ) ^{\ast }\nu ,{E}) $ given by
\begin{align*}
X( u_{x})=(( g_{b}^{\alpha }\circ h^{-1})
( u^{b}\circ h^{-1}) t_{\alpha })( h\circ \pi(
u_{x}))=g_{b}^{\alpha }( x) u^{b}( x) t_{\alpha }(h(x)),
\end{align*}%
for any $u_{x}\in E$. If we define
\begin{equation*}
\begin{array}{rcl}
\ ( h\circ \pi ) ^{\ast }F & ^{\underrightarrow{\overset{(
h\circ \pi ) ^{\ast }F}{\rho }}} & TE, \\
\displaystyle X^{\alpha }T_{\alpha }(u_{x}) & \longmapsto & \displaystyle%
( g_{b}^{\alpha }\circ \pi ) (u^{b}\circ \pi )( \rho
_{\alpha }^{i}\circ h\circ \pi ) \partial _{i}( u_{x}) ,%
\end{array}%
\end{equation*}%
then $(({\overset{( h\circ \pi ) ^{\ast }E}{\rho }},Id_{E}))$ is
a vector bundles morphism from $( ( h\circ \pi ) ^{\ast }F,(
h\circ \pi ) ^{\ast }\nu ,E) $ to $( TE,\tau _{E},E) $. Moreover,
the operation
\begin{equation*}
\begin{array}{ccc}
\Gamma ( ( h\circ \pi ) ^{\ast }F,( h\circ \pi ) ^{\ast }\nu ,E)
^{2} & ^{\underrightarrow{~\ \ [ ,] _{( h\circ \pi ) ^{\ast }F}~\
\ }} & \Gamma ( ( h\circ
\pi ) ^{\ast }F,( h\circ \pi ) ^{\ast }\nu ,E) ,%
\end{array}%
\end{equation*}%
defined by
\begin{equation*}
\begin{array}{ll}
\left[ T_{\alpha },T_{\beta }\right] _{\left( h\circ \pi \right) ^{\ast }F}
&\!\!\!\!\!\!=(L_{\alpha \beta }^{\gamma }\circ h\circ \pi )T_{\gamma },\vspace*{1mm}
\\
\left[ T_{\alpha },fT_{\beta }\right] _{\left( h\circ \pi \right) ^{\ast }F}
&\!\!\!=f( L_{\alpha \beta }^{\gamma }\circ h\circ \pi)
T_{\gamma }+(\rho _{\alpha }^{i}\circ h\circ \pi) \partial
_{i}\left( f\right) T_{\beta },\vspace*{1mm} \\
\left[ fT_{\alpha },T_{\beta }\right] _{\left( h\circ \pi \right) ^{\ast }F}
&\!\!\!=-\left[ T_{\beta },fT_{\alpha }\right] _{\left( h\circ \pi \right) ^{\ast
}F},%
\end{array}%
\end{equation*}%
for any $f\in \mathcal{F}( E) $, is a Lie bracket on $\Gamma
(( h\circ \pi) ^{\ast }F,( h\circ \pi) ^{\ast
}\nu ,E) $. It is easy to check that
\begin{equation*}
((\left( h\circ \pi \right) ^{\ast }F,\left( h\circ \pi \right)
^{\ast
}\nu ,E),\left[ ,\right] _{\left( h\circ \pi \right) ^{\ast }F},(\overset{%
\left( h\circ \pi \right) ^{\ast }F}{\rho },Id_{E})),
\end{equation*}%
is a Lie algebroid which is called the \textit{pull-back Lie algebroid of
the generalized Lie algebroid} $(\left( F,\nu ,N\right) ,\left[ ,\right]
_{F,h},\left( \rho ,\eta \right) )$.

Let $(\partial _{i},\dot{\partial}_{a})$ be the base sections for the Lie $%
\mathcal{F}(E)$-algebra
\begin{equation*}
(\Gamma (TE,\tau _{E},E),+,\cdot ,[,]_{TE}).
\end{equation*}%
Then
\begin{align*}
X^{\alpha }\tilde{\partial}_{\alpha }+\tilde{X}^{a}\dot{\tilde{\partial}}%
_{a}& :=X^{\alpha }(T_{\alpha }\oplus (\rho _{a}^{i}\circ h\circ \pi
)\partial _{i})+\tilde{X}^{a}(0_{\left( h\circ \pi \right) ^{\ast }F}\oplus
\dot{\partial}_{a}) \\
& =X^{\alpha }T_{\alpha }\oplus (X^{\alpha }(\rho _{\alpha }^{i}\circ h\circ
\pi )\partial _{i}+\tilde{X}^{a}\dot{\partial}_{a}),
\end{align*}%
making from $X^{\alpha }S_{\alpha }\in \Gamma (( h\circ \pi
) ^{\ast }F,( h\circ \pi) ^{\ast }\nu ,E) $ and $%
\tilde{X}^{a}\dot{\partial}_{a}\in \Gamma( VTE,\tau _{E},E) $ is
a section of $(( h\circ \pi) ^{\ast }F\oplus TE,\overset{\oplus }%
{\pi },E)$. Moreover, it is easy to see that the sections $\tilde{\partial}%
_{1},\cdots ,\tilde{\partial}_{r},\dot{\tilde{\partial}}_{1},\cdots ,\dot{%
\tilde{\partial}}_{r}$ are linearly independent. Now, we consider the vector
subbundle $(\left( \rho ,\eta \right) TE,\left( \rho ,\eta \right) \tau
_{E},E)$ of the vector bundle $(\left( h\circ \pi \right) ^{\ast }F\oplus TE,%
\overset{\oplus }{\pi },E),$ for which the $\mathcal{F}(E)$-module of
sections is the $\mathcal{F}(E)$-submodule of
\begin{equation*}
(\Gamma (\left( h\circ \pi \right) ^{\ast }F\oplus TE,\overset{\oplus }{\pi }%
,E),+,\cdot ),
\end{equation*}%
generated by the set of sections $(\tilde{\partial}_{\alpha },\dot{\tilde{%
\partial}}_{a}).$ The base sections $(\tilde{\partial}_{\alpha },\dot{\tilde{%
\partial}}_{a})$ are called the \emph{natural }$(\rho ,\eta )$\emph{-base}.
Now, we consider the vector bundles morphism $\left( \tilde{\rho}%
,Id_{E}\right) $ from $((\rho ,\eta )TE,(\rho ,\eta )\tau _{E}$ $,E)$ to $%
(TE,\tau _{E},E)$ , where
\begin{equation*}
\begin{array}{rcl}
\left( \rho ,\eta \right) TE\!\!\! & \!\!^{\underrightarrow{\ \ \tilde{\rho}%
\ \ }}\!\!\! & \!\!TE\vspace*{2mm} ,\\
(X^{\alpha }\tilde{\partial}_{\alpha }+\tilde{X}^{a}\dot{\tilde{\partial}}%
_{a})\!(u_{x})\!\!\!\! & \!\!\longmapsto \!\!\! & \!\!(\!X^{\alpha }(\rho
_{\alpha }^{i}{\circ h\circ }\pi )\partial _{i}{+}\tilde{X}^{a}\dot{\partial}%
_{a})\!(u_{x}).%
\end{array}%
\end{equation*}%
Moreover, define the Lie bracket $[,]_{(\rho ,\eta )TE}$ as follows%
\begin{equation*}
\begin{array}{l}
[X_{1}^{\alpha }\tilde{\partial}_{\alpha }+\tilde{X}_{1}^{a}\dot{\tilde{%
\partial}}_{a},X_{2}^{\beta }\tilde{\partial}_{\beta }+\tilde{X}_{2}^{b}\dot{%
\tilde{\partial}}_{b}]_{( \rho ,\eta ) TE} =[X_{1}^{\alpha
}T_{\alpha }, X_{2}^{\beta }T_{\beta }]_{(
h\circ \pi ) ^{\ast }F} \\
\ \ \ \oplus[( X_{1}^{\alpha }(\rho _{\alpha }^{i}\circ h\circ \pi
)\partial _{i}+\tilde{X}_{1}^{a}\dot{\partial}_{a},X_{2}^{\beta
}(\rho _{\beta
}^{j}\circ h\circ \pi )\partial _{j}+\tilde{X}_{2}^{b}\dot{\partial}%
_{b})] _{TE}.%
\end{array}%
\end{equation*}%
Easily we obtain that $([ ,] _{( \rho ,\eta ) TE},(
\tilde{\rho},Id_{E}) )$ is a Lie algebroid structure for the vector
bundle $( ( \rho ,\eta ) TE,( \rho ,\eta ) \tau _{E},E) $ which is
called the \textit{generalized tangent bundle}.
\section{Vertical and complete $( g,h) $-lifts of sections of a vector bundle}
 In this section, we consider the diagram (\ref{diag}) for the generalized Lie algebroid
$( ( F,\nu ,N) ,[ ,] _{F,h},( \rho ,\eta ) ) $. Also, we admit that
$( g,h) $ is a vector bundles morphism locally invertible from $(
E,\pi ,M) $ to $( F,\nu ,N) $ with components
\begin{equation*}
\begin{array}{c}
g_{b}^{\alpha },\ \ \alpha \in \{ 1,\cdots ,n\},\ \  b\in \{
1,\ldots ,r\} .%
\end{array}%
\end{equation*}
So, for any vector local $( m+r) $-chart $( V,t_{V}) $ of $( E,\pi
,M) $, there exist the real functions
\begin{equation*}
\begin{array}{ccc}
V & ^{\underrightarrow{~\ \ \ \tilde{g}_{\alpha }^{b}~\ \ }}\!\!\!\! &
\mathbb{R},\ \ \alpha \in \{1,\cdots,n\},\ \  b\in \{1,\cdots,r\},%
\end{array}%
\end{equation*}%
such that $\tilde{g}_{\alpha }^{b}( \varkappa ) \cdot g_{a}^{\alpha
}( \varkappa ) =\delta _{a}^{b}$ and $\tilde{g}_{\alpha }^{a}(
\varkappa ) \cdot g_{a}^{\beta }( \varkappa ) =\delta _{\alpha
}^{\beta }$, for any $\varkappa \in V$.

Thus, we can discuss about vector bundles morphism $\left(
g^{-1},h^{-1}\right) $ from $\left( F,\nu ,N\right) $ to $\left(
E,\pi ,M\right) $ with components
\begin{equation*}
\begin{array}{c}
\tilde{g}_{\alpha }^{b}\circ h^{-1},\ \alpha \in \{1,\cdots,n\},\ \
\ \
b\in \{1,\cdots,r\}.%
\end{array}%
\end{equation*}

\begin{definition}
If $f\in \mathcal{F}\left( N\right) $ (respectively $f\in
\mathcal{F}\left( M\right))$, then the real function $f^{\vee
}=f\circ h\circ \pi $ (respectively $f^{\vee }=f\circ \pi )$ is
called the vertical lift of the function$f.$
\end{definition}
It is remarkable that since
\begin{equation*}
\begin{array}{c}
( \Gamma (Th\circ \rho ,h\circ \eta )\Gamma ( g,h) (u^{a}s_{a}))
(f)=( g_{b}^{\alpha }u^{b}) \circ h^{-1}\rho
_{\alpha }^{i}\frac{\partial (f\circ h)}{\partial x^{i}}\circ h^{-1}%
\end{array}%
,
\end{equation*}%
then using the above definition
\begin{equation}\label{J1}
( \Gamma (Th\circ \rho,h\circ \eta)\Gamma(g, h)(u^{a}s_{a})(f)%
) ^{v}=( ( g_{b}^{\alpha }u^{b}\rho _{\alpha }^{i}\circ h) \circ \pi
) \partial _{i}(f\circ h\circ \pi ).
\end{equation}
\begin{definition}
If $u=u^{a}s_{a}$ is a section of $( E,\pi ,M) $, then we introduce
the vertical lift of $u$ as section of $\Gamma ( TE,\tau _{E},E) $
given by
\begin{equation}\label{J2}
u^{\vee }=(u^{b}\circ \pi )\dot{\partial}_{a}.
\end{equation}
\end{definition}
If $\{s_{a}\}$ be a basis of sections of $\Gamma ( E,\pi ,M) $, then
using the above equation we have
\begin{equation*}
s_{a}^{\vee }=\dot{\partial}_{a}.
\end{equation*}
Using the locally expression of $u^{\vee }$ we can deduce
\begin{lemma}
If $u$ and $v$ are sections of $E$ and $f\in \mathcal{F}(M)$, then
\begin{equation*}
(u+v)^{\vee }=u^{\vee }+v^{\vee },\ \ \ (fu)^{\vee }=f^{\vee
}u^{\vee },\ \ \ u^{\vee }( f^{\vee }) =0.
\end{equation*}
\end{lemma}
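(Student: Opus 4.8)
The plan is to verify each of the three identities by a direct computation in an adapted local chart, relying on the coordinate expression \eqref{J2} for the vertical lift together with the fact that composition with $\pi$ is a ring homomorphism $\mathcal{F}(M)\to\mathcal{F}(E)$ and that $\Gamma(E,\pi,M)$ is an $\mathcal{F}(M)$-module. Throughout I write $u=u^{a}s_{a}$ and $v=v^{a}s_{a}$ in a fixed local basis $\{s_{a}\}$ of sections, so that by \eqref{J2} we have $u^{\vee}=(u^{a}\circ\pi)\dot{\partial}_{a}$ and $v^{\vee}=(v^{a}\circ\pi)\dot{\partial}_{a}$, and I recall that for $f\in\mathcal{F}(M)$ the vertical lift of the function is $f^{\vee}=f\circ\pi$.

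For the first two identities no real difficulty arises, since both reduce to the behaviour of the components under these module operations and the additivity and multiplicativity of $\cdot\circ\pi$. For additivity, $u+v=(u^{a}+v^{a})s_{a}$, so applying \eqref{J2} gives $(u+v)^{\vee}=((u^{a}+v^{a})\circ\pi)\dot{\partial}_{a}=(u^{a}\circ\pi)\dot{\partial}_{a}+(v^{a}\circ\pi)\dot{\partial}_{a}=u^{\vee}+v^{\vee}$. For the scalar identity, the module product is $fu=(fu^{a})s_{a}$, whence $(fu)^{\vee}=((fu^{a})\circ\pi)\dot{\partial}_{a}$; using $(fu^{a})\circ\pi=(f\circ\pi)(u^{a}\circ\pi)=f^{\vee}(u^{a}\circ\pi)$ this equals $f^{\vee}(u^{a}\circ\pi)\dot{\partial}_{a}=f^{\vee}u^{\vee}$.

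The third identity is where the only genuine content lies, and it is the step I would treat most carefully. Here $u^{\vee}$ must be interpreted as a vector field on $E$, i.e. a derivation of $\mathcal{F}(E)$, acting on the pulled-back function $f^{\vee}=f\circ\pi$. The key observation is that, in the adapted coordinates $(x^{i},y^{a})$ on $E$, the vertical base vector fields are $\dot{\partial}_{a}=\partial/\partial y^{a}$, while $f\circ\pi$ depends only on the base coordinates $x^{i}$ and is therefore constant along the fibres of $\pi$; hence $\dot{\partial}_{a}(f\circ\pi)=0$ for each $a$. Consequently $u^{\vee}(f^{\vee})=(u^{a}\circ\pi)\dot{\partial}_{a}(f\circ\pi)=0$. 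The one point requiring attention is precisely this interpretation—that $u^{\vee}$ is a derivation taking values in the vertical subbundle $VTE$, so that it annihilates any function lifted from the base—after which the conclusion is immediate.
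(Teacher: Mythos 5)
Your proof is correct and follows exactly the route the paper intends: the paper offers no written proof, saying only ``Using the locally expression of $u^{\vee}$ we can deduce'' the three identities, and your coordinate computation using $u^{\vee}=(u^{a}\circ\pi)\dot{\partial}_{a}$, the ring-homomorphism property of $\cdot\circ\pi$, and the fact that $\dot{\partial}_{a}$ annihilates functions constant on the fibres is precisely that deduction spelled out. Nothing is missing.
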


For any $z\in \Gamma  (F,\nu ,N) $, the $\mathcal{F}( N)
$-multilinear application
\begin{equation*}
\begin{array}{rcl}
\Lambda ( F,\nu ,N) & ^{\underrightarrow{~\ \ L_{z}~\ \ }} &
\Lambda ( F,\nu ,N)%
\end{array}%
,
\end{equation*}%
defined by%
\begin{equation*}
\begin{array}{c}
L_{z}( f) =[ \Gamma ( Th\circ \rho ,h\circ \eta ) z] ( f) ,~\forall
f\in \mathcal{F}( N),
\end{array}%
\end{equation*}%
and
\begin{align*}
L_{z}\theta ( z_{1},...,z_{q}) & =\Gamma ( Th\circ \rho
,h\circ \eta ) z( \omega ( z_{1},...,z_{q}) )\nonumber \\
& -\sum_{i=1}^{q}\theta ( ( z_{1},...,[ z,z_{i}] _{F,h},...,z_{q}) )
,
\end{align*}%
for any $\theta \in \Lambda ^{q}\mathbf{\ }( F,\nu ,N) $ and $%
z_{1},...,z_{q}\in \Gamma ( F,\nu ,N) ,$ will be called \emph{the
covariant Lie derivative with respect to the section }$z.$ Also for
any $u\in \Gamma ( E,\pi ,M) $, the $\mathcal{F}( M) $-multilinear
application
\begin{equation*}
\begin{array}{rcl}
\Lambda ( E,\pi ,M) & ^{\underrightarrow{~\ \ ( g,h)
\mathcal{L}_{u}~\ \ }} & \Lambda ( E,\pi ,M)%
\end{array}%
,
\end{equation*}%
defined by%
\begin{equation*}
\begin{array}{c}
( g,h) \mathcal{L}_{u}( f) =( g,h) ^{\ast }\{ \Gamma ( Th\circ \rho
,h\circ \eta ) (\Gamma ( g,h) u) ( g^{-1},h^{-1}) ^{\ast }f\}
,~\forall f\in \mathcal{F}( M),
\end{array}%
\end{equation*}%
and
\begin{equation*}
\begin{array}{l}
( g,h) \mathcal{L}_{u}\omega ( u_{1},...,u_{q}) \\
=( g,h) ^{\ast }\{ \mathcal{L}_{\Gamma ( g,h) u}( g^{-1},h^{-1})
^{\ast }\omega( \Gamma ( g,h)
u_{1},...,\Gamma ( g,h) u_{q})\} \\
=( g,h) ^{\ast }\{ \Gamma ( Th\circ \rho ,h\circ \eta ) [ \Gamma (
g,h) u] ( g^{-1},h^{-1}) ^{\ast }\omega ( \Gamma ( g,h)
u_{1},...,\Gamma (
g,h) u_{q}) \} \\
-( g,h) ^{\ast }\{ ( g^{-1},h^{-1}) ^{\ast }\omega ( \Gamma ( g,h)
u_{1},...,[ \Gamma ( g,h) u,\Gamma ( g,h) u_{i}] _{F,h},...,\Gamma (
g,h) u_{q}) \} ,%
\end{array}%
\end{equation*}%
for any $\omega \in \Lambda ^{q}\mathbf{\ }( E,\pi ,M) $ and $%
u_{1},...,u_{q}\in \Gamma( E,\pi ,M) ,$ will be called \emph{the
covariant Lie }$( g,h) $-\emph{derivative with respect to the
section }$u.$

\begin{definition}
For any $a=1,\cdots ,r,$ we consider the real function $U^{a}$ on $E$ such
that
\begin{equation*}
U^{a}|_{\pi ^{-1}(V)}( u_{x}) =y^{a},
\end{equation*}%
where the real numbers $y^{1},\cdots ,y^{r}$ are the fibre
components of the point $u_{x}$ in the arbitrary vector local
$(m+r)$-chart $({V},s_{{V}}).$
\end{definition}
Using the above definition, we can deduce $\dot{\partial}_{b}(
U^{a})=\delta _{b}^{a}$ and $\partial _{i}( U^{a})=0$, where $a,
b\in \{1,\cdots,r\}$ and $i\in \{1,\cdots,m\}$.

Now, let $\omega =\omega _{a}s^{a}\in \Lambda ^{1}\mathbf{\ }( E,\pi
,M)$. Then we consider the real function $\hat{\omega}$ defined by
\begin{equation}
\hat{\omega}|_{\pi ^{-1}(V)}=U^{a}( \omega _{a}\circ \pi ) |_{\pi
^{-1}(V)},
\end{equation}%
where $({V},s_{{V}})$ is an arbitrary vector local $(m+r)$-chart$.$

\begin{theorem}\label{T1}
Let $u$ be a section of $( E,\pi ,M) $. Then there exists a unique
vector field $u^{c}\in \Gamma ( TE,\tau _{E},E) $, the complete $(g, h)$-lift
of $u$, satisfying the following conditions:

\textbf{i}) $u^{c}$ is $(h\circ \pi )$-related with $ \Gamma
(Th\circ \rho ,h\circ \eta ) ( \Gamma ( g,h) u) $, i.e.,
\begin{equation*}
T(h\circ \pi )(u_{v_{x}}^{c})=\{ \Gamma (Th\circ \rho ,h\circ \eta )
( \Gamma ( g,h) u) \} (h\circ \pi ( v_{x}) ) ,
\end{equation*}

\textbf{ii}) $u^{c}(\hat{\omega})=\widehat{( g,h) \mathcal{L}%
_{u}\omega },\ \ \ \ \ \  \forall \omega \in \Lambda ^{1}\mathbf{\
}( E,\pi ,M)$.
\end{theorem}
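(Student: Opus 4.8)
The plan is to work in an arbitrary vector local $(m+r)$-chart and write the unknown field in the natural frame as $u^{c}=A^{i}\partial _{i}+B^{a}\dot{\partial}_{a}$, with $A^{i},B^{a}\in\mathcal{F}(E)$ to be determined. The two conditions decouple cleanly: condition \textbf{i}) is a relatedness statement that only sees the horizontal part and therefore fixes the $A^{i}$, while condition \textbf{ii}) is a statement about fibre functions that, once the $A^{i}$ are known, fixes the $B^{a}$. The uniqueness half of the theorem will then be almost immediate, and the existence half reduces to checking that the locally prescribed $A^{i},B^{a}$ glue to a global section.

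First I would extract $A^{i}$ from condition \textbf{i}). Since any function of the form $f\circ h\circ\pi$ is constant along the fibres of $\pi$, we have $\dot{\partial}_{a}(f\circ h\circ\pi)=0$, so the relatedness identity reduces to $A^{i}\,\partial_{i}(f\circ h\circ\pi)=\{\Gamma(Th\circ\rho,h\circ\eta)(\Gamma(g,h)u)(f)\}\circ h\circ\pi$ for every $f\in\mathcal{F}(N)$. Expanding the right-hand side by means of the formula for $\Gamma(Th\circ\rho,h\circ\eta)$ recorded in the Remark together with the local expression of $\Gamma(g,h)u$, and using that $h$ is a diffeomorphism so that the functions $\partial(f\circ h)/\partial x^{i}$ can be prescribed independently, one reads off
\[
A^{i}=\big((u^{a}g_{a}^{\alpha})\circ\pi\big)\,(\rho_{\alpha}^{i}\circ h\circ\pi),
\]
and this is the only choice compatible with \textbf{i}).

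Next I would determine $B^{a}$ from condition \textbf{ii}). Using $\partial_{i}(U^{a})=0$, $\dot{\partial}_{b}(U^{a})=\delta_{b}^{a}$ and $\dot{\partial}_{a}(\omega_{b}\circ\pi)=0$ for $\omega=\omega_{a}s^{a}$, the left-hand side becomes
\[
u^{c}(\hat{\omega})=A^{i}\,U^{a}\,\big((\partial_{i}\omega_{a})\circ\pi\big)+B^{a}(\omega_{a}\circ\pi),
\]
while the right-hand side is $\widehat{(g,h)\mathcal{L}_{u}\omega}=U^{a}\big([(g,h)\mathcal{L}_{u}\omega]_{a}\circ\pi\big)$, so the content of \textbf{ii}) is that these match for every $\omega$. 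Because $\omega_{a}$ and its first derivatives are independent data along the base, equating coefficients solves uniquely for $B^{a}$ in terms of $u$, the anchor $\rho$, the structure functions $L_{\alpha\beta}^{\gamma}$, the components $g_{a}^{\alpha},\tilde{g}_{\alpha}^{a}$ and the fibre functions $U^{b}$. I expect this to be the main obstacle: it requires unwinding the threefold composition $(g,h)^{\ast}\{\mathcal{L}_{\Gamma(g,h)u}\,(g^{-1},h^{-1})^{\ast}\omega\}$ in the definition of the covariant Lie $(g,h)$-derivative, carefully tracking which functions live on $M$ and which on $N$, and where the compositions with $h$ and $h^{-1}$ enter, so as to compute $[(g,h)\mathcal{L}_{u}\omega]_{a}$ and confirm it has exactly the $U^{b}$-linear shape forced by the display above.

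Finally I would settle uniqueness and existence. For uniqueness, if $u^{c}$ and $\tilde{u}^{c}$ both satisfy \textbf{i}) and \textbf{ii}), their difference $w$ is $(h\circ\pi)$-related to the zero field, hence vertical, say $w=W^{a}\dot{\partial}_{a}$; then $0=w(\hat{\omega})=W^{a}(\omega_{a}\circ\pi)$ for all $\omega$ forces $W^{a}=0$, so $w=0$. For existence, the formulas for $A^{i}$ and $B^{a}$ define a field on each chart domain; since both defining conditions are intrinsic and the local solution is unique, the chart expressions must agree on overlaps and patch to a global $u^{c}\in\Gamma(TE,\tau_{E},E)$, which by construction satisfies \textbf{i}) and \textbf{ii}).
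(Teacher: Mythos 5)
Your proposal follows essentially the same route as the paper's proof: write $u^{c}=A^{i}\partial _{i}+B^{a}\dot{\partial}_{a}$ in a local chart, read off $A^{i}=((g_{a}^{\alpha }u^{a})\circ \pi )(\rho _{\alpha }^{i}\circ h\circ \pi )$ from condition \textbf{i}) using $\dot{\partial}_{a}(f\circ h\circ \pi )=0$, and then determine $B^{a}$ by comparing both sides of condition \textbf{ii}) and specializing to $\omega =s^{b}$ --- exactly the computation the paper performs by introducing $K_{a}^{\gamma }(u)t_{\gamma }=[\Gamma (g,h)u,\Gamma (g,h)s_{a}]_{F,h}$ and expanding $(g,h)\mathcal{L}_{u}\omega (s_{a})$. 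The step you defer (confirming that $\widehat{(g,h)\mathcal{L}_{u}\omega }$ has the predicted $U^{b}$-linear shape, with the $\partial _{i}\omega _{a}$ terms matching $A^{i}U^{a}((\partial _{i}\omega _{a})\circ \pi )$ so that only the algebraic determination of $B^{a}$ remains) is precisely the paper's displayed calculation and does work out; your explicit uniqueness and chart-gluing arguments are a slight improvement on the paper, which simply asserts that the local formula proves existence and uniqueness.
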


\begin{proof}
At first we let that there exists $u^{c}$ such that satisfies in (i) and
(ii). Since $u^{c}$ is a vector field on $E$, then we can write it as
follows:
\begin{equation*}
u^{c}=A^{i}\partial _{i}+B^{a}\dot{\partial}_{a},
\end{equation*}%
where $A^{i},B^{a}\in \mathcal{F}(E)$. We have
\begin{equation*}
T(h\circ \pi )({\partial _{i}}_{v_{x}})(f)=T\pi ({\partial _{i}}%
_{v_{x}})(f\circ h)={\partial _{i}}_{v_{x}}(f\circ h\circ \pi ),
\end{equation*}%
and
\begin{equation*}
T(h\circ \pi )({\dot{\partial _{a}}}_{v_{x}})(f)=T\pi ({\dot{\partial _{a}}}%
_{v_{x}})(f\circ h)={\dot{\partial _{a}}}_{v_{x}}(f\circ h\circ \pi )=0.
\end{equation*}
From two above equations we obtain
\begin{equation*}
T(h\circ \pi )({u^{c}}_{v_{x}})(f)=A^{i}(v_{x}){\partial _{i}}%
_{v_{x}}(f\circ h\circ \pi ).
\end{equation*}
On the other hand we have
\begin{align*}
& \Gamma (Th\circ \rho ,h\circ \eta )( \Gamma( g,h) u)
h\circ \pi (v_{x})(f)  \notag  \label{10} \\
& =( ( g_{c}^{\alpha }\circ \pi )( u^{c}\circ \pi ) ( \rho _{\alpha
}^{i}\circ h\circ \pi ) )( v_{x}) {\partial _{i}}_{v_{x}}(f\circ
h\circ \pi ).
\end{align*}%
Condition (i) give us
\begin{equation*}
A^{i}=( g_{c}^{\alpha }u^{c}\rho _{\alpha }^{i}\circ h) \circ \pi .
\end{equation*}
Therefore we have
\begin{equation*}
u^{c}=( ( g_{c}^{\alpha }u^{c}\rho _{\alpha }^{i}\circ h) \circ \pi)
\partial _{i}+B^{a}\dot{\partial}_{a}.
\end{equation*}
Now, let $\omega =\omega _{b}s^{b}\in \Lambda ^{1}\mathbf{\ }( E,\pi
,M) $. Then we get
\begin{equation*}
u^{c}(\hat{\omega})=U^{b}(( g_{c}^{\alpha }u^{c}\rho _{\alpha
}^{i}\circ h) \circ \pi)\partial _{i}(\omega _{b}\circ \pi
)+B^{b}(\omega _{b}\circ \pi ).
\end{equation*}
Now, let $K_{a}^{\gamma }\left( u\right) t_{\gamma }=[\Gamma \left(
g,h\right) u,\Gamma \left( g,h\right) s_{a}]_{F,h}$. Then using
(\ref{*}) we get
\begin{equation*}
\begin{array}{cl}
K_{a}^{\gamma }( u)&\!\!\!\!=( g_{c}^{\beta }u^{c}) \circ h^{-1}\rho
_{\beta }^{j}\frac{\partial g_{a}^{\gamma }}{\partial x^{i}}\circ
h^{-1}-( g_{a}^{\alpha }\circ h^{-1}) \rho _{\alpha }^{i}\frac{%
\partial ( g_{c}^{\gamma }u^{c}) }{\partial x^{i}}\circ h^{-1} \\
& +( g_{c}^{\alpha }u^{c}) \circ h^{-1}L_{\alpha \beta }^{\gamma
}( g_{a}^{\beta }\circ h^{-1}).%
\end{array}%
\end{equation*}
On the other hand we have%
\begin{equation*}
\begin{array}{cl}
( g,h) \mathcal{L}_{u}\omega (s_{a}) &\!\!\!\!=( g,h) ^{\ast }\{
\Gamma ( Th\circ \rho ,h\circ \eta ) (\Gamma ( g,h) u)(
g^{-1},h^{-1}) ^{\ast }\omega (
\Gamma ( g,h) s_{a}) \} \\
& -( g,h) ^{\ast }\{ ( g^{-1},h^{-1}) ^{\ast
}\omega ( K_{a}^{\gamma }( u) t_{\gamma }) \} \\
&\!\!\!\!=g_{b}^{\alpha }u^{b}( \rho _{\alpha }^{i}\circ h) \frac{%
\partial \omega _{a}}{\partial x^{i}}-\tilde{g}_{\gamma }^{b}\omega
_{b}K_{a}^{\gamma }( u) \circ h \\
&\!\!\!\!=g_{b}^{\alpha }u^{b}( \rho _{\alpha }^{i}\circ h) \frac{%
\partial \omega _{a}}{\partial x^{i}}-g_{c}^{\beta }u^{c}( \rho _{\beta
}^{j}\circ h) \frac{\partial g_{a}^{\gamma }}{\partial x^{i}}\tilde{g}%
_{\gamma }^{b}\omega _{b} \\
& -g_{a}^{\alpha }( \rho _{\alpha }^{i}\circ h) \frac{\partial (
g_{c}^{\gamma }u^{c}) }{\partial x^{i}}\tilde{g}_{\gamma }^{b}\omega
_{b}+g_{c}^{\alpha }u^{c}( L_{\alpha \beta }^{\gamma }\circ
h) g_{a}^{\beta }\tilde{g}_{\gamma }^{b}\omega _{b}.%
\end{array}%
\end{equation*}
Thus we have%
\begin{equation*}
\begin{array}{cl}
\widehat{( g,h) \mathcal{L}_{u}\omega } &\!\!\!\!=U^{a}( (
g,h) \mathcal{L}_{u}\omega (s_{a})) \circ \pi \\
&\!\!\!\!=U^{a}( g_{b}^{\alpha }u^{b}( \rho _{\alpha }^{i}\circ h)
\frac{\partial \omega _{a}}{\partial x^{i}}-\tilde{g}_{\gamma
}^{b}\omega
_{b}K_{a}^{\gamma }( u) \circ h) \circ \pi \\
&\!\!\!\!=U^{a}\{ ( g_{b}^{\alpha }u^{b}( \rho _{\alpha }^{i}\circ
h) \frac{\partial \omega _{a}}{\partial x^{i}}-\tilde{g}_{\gamma
}^{b}\omega _{b}g_{c}^{\beta }u^{c}( \rho _{\beta }^{j}\circ h)
\frac{\partial g_{a}^{\gamma }}{\partial x^{i}}) \circ \pi  \\
&+( -g_{a}^{\alpha }( \rho _{\alpha }^{i}\circ h)
\frac{\partial ( g_{c}^{\gamma }u^{c}) }{\partial x^{i}}\tilde{g}%
_{\gamma }^{b}\omega _{b}+g_{c}^{\alpha }u^{c}( L_{\alpha \beta
}^{\gamma }\circ h) g_{a}^{\beta }\tilde{g}_{\gamma }^{b}\omega _{b}%
) \circ \pi \} .%
\end{array}%
\end{equation*}%
But condition (ii) gives us
\begin{equation*}
B^{b}(\omega _{b}\circ \pi )=-U^{a}(K_{a}^{\gamma }( u) \circ h\circ
\pi )(( \tilde{g}_{\gamma }^{b}\omega _{b}) \circ \pi).
\end{equation*}
Since $\omega $ is arbitrary, then we suppose that $\omega =s^{b}$.
Thus we have $\omega _{b}=1$ and $\omega _{a}=0$, for any $a\neq b$.
Therefore we obtain
\begin{equation*}
B^{b}=-U^{a}( K_{a}^{\gamma }( u) \circ h\circ \pi ) (
\tilde{g}_{\gamma }^{b}\circ \pi ) .
\end{equation*}
So, for $u^{c}$ we can obtain the following locally expression:%
\begin{equation}\label{J3}
\begin{array}{cl}
u^{c}&\!\!\!\!=( g_{e}^{\alpha }u^{e}\rho _{\alpha }^{i}\circ h)
\circ \pi
\partial _{i}-U^{a}( K_{a}^{\gamma }( u) \circ
h\circ
\pi) ( \tilde{g}_{\gamma }^{b}\circ \pi ) \dot{\partial}%
_{b}.%
\end{array}%
\end{equation}
The above relation prove the existence and uniqueness of the complete lift.
\end{proof}
\begin{definition}
The complete $( g,h) $-lift of a function $f\in\mathcal{F}(N)$ into $\mathcal{F}(E)$ is the function
\begin{equation*}
f^{c}:E\longrightarrow \mathbb{R},
\end{equation*}%
defined by
\begin{equation*}\label{J4}
f^{c}|_{\pi ^{-1}(V)}=U^{a}( g_{a}^{b}\circ \pi ) (\rho
_{b}^{i}\circ h\circ \pi )\partial _{i}(f\circ h\circ \pi )|_{\pi
^{-1}(V)},
\end{equation*}%
where $({V},s_{{V}})$ is an arbitrary vector local $(m+r)$-chart.
\end{definition}
\begin{lemma}\label{12}
If $u$ is a section of $( E,\pi ,M) $ and $f, f_1, f_2\in \mathcal{F}(N)$%
, then%
\begin{equation*}
\begin{array}{l}
(i)\ (f_1+f_2)^{c}=f_1^{c}+f_2^{c}, \\
(ii)\ (f_1f_2)^{c}=f_1^{c}f_2^{\vee }+f_1^{\vee }f_2^{c}, \\
(iii)\ u^{\vee }( f^{c}) =\{ \Gamma (Th\circ \rho ,h\circ
\eta )(\Gamma( g,h) u)(f)\} ^{\vee }.%
\end{array}%
\end{equation*}
\end{lemma}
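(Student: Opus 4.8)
The plan is to argue all three parts directly from the local defining formula
\[
f^{c}|_{\pi^{-1}(V)}=U^{a}(g_{a}^{\alpha}\circ\pi)(\rho_{\alpha}^{i}\circ h\circ\pi)\,\partial_{i}(f\circ h\circ\pi),
\]
which is the formula defining $f^{c}$ with the summed fibre index of $F$ written $\alpha$, together with $f^{\vee}=f\circ h\circ\pi$ for $f\in\mathcal{F}(N)$, the vertical lift $u^{\vee}=(u^{a}\circ\pi)\dot\partial_{a}$, and the elementary identities $\dot\partial_{b}(U^{a})=\delta_{b}^{a}$ and $\partial_{i}(U^{a})=0$ noted above. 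No global/chart-independence issue arises since everything reduces to algebra of these local expressions.

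For (i), I would simply observe that $f\mapsto f\circ h\circ\pi$ is additive and that $\partial_{i}$ is additive; hence the right-hand side of the defining formula is additive in $f$, giving $(f_{1}+f_{2})^{c}=f_{1}^{c}+f_{2}^{c}$ at once. For (ii), the point is that precomposition with $h\circ\pi$ converts a product of functions into the product of their pullbacks, $(f_{1}f_{2})\circ h\circ\pi=(f_{1}\circ h\circ\pi)(f_{2}\circ h\circ\pi)$, so the Leibniz rule for the derivation $\partial_{i}$ yields
\[
\partial_{i}((f_{1}f_{2})\circ h\circ\pi)=\partial_{i}(f_{1}\circ h\circ\pi)\,(f_{2}\circ h\circ\pi)+(f_{1}\circ h\circ\pi)\,\partial_{i}(f_{2}\circ h\circ\pi).
\]
Inserting this into the defining formula and regrouping, the first term is exactly $f_{1}^{c}\,f_{2}^{\vee}$ and the second is $f_{1}^{\vee}\,f_{2}^{c}$, which is (ii).

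Part (iii) is the one genuine computation, and the only mild obstacle is the index bookkeeping together with the matching of compositions with $h$ against $h^{-1}$. I would substitute the two local expressions and let the vertical field act:
\[
u^{\vee}(f^{c})=(u^{a}\circ\pi)\,\dot\partial_{a}\Big(U^{c}(g_{c}^{\alpha}\circ\pi)(\rho_{\alpha}^{i}\circ h\circ\pi)\,\partial_{i}(f\circ h\circ\pi)\Big).
\]
The three factors $g_{c}^{\alpha}\circ\pi$, $\rho_{\alpha}^{i}\circ h\circ\pi$ and $\partial_{i}(f\circ h\circ\pi)$ are all pulled back from the base $M$, hence are killed by $\dot\partial_{a}$; only $U^{c}$ is differentiated, contributing $\dot\partial_{a}(U^{c})=\delta_{a}^{c}$. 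This collapses the expression to
\[
u^{\vee}(f^{c})=((g_{a}^{\alpha}u^{a})\circ\pi)(\rho_{\alpha}^{i}\circ h\circ\pi)\,\partial_{i}(f\circ h\circ\pi).
\]
Finally I would recognize the right-hand side as a vertical lift: using $\partial_{i}(f\circ h\circ\pi)=\big(\tfrac{\partial(f\circ h)}{\partial x^{i}}\big)\circ\pi$ and comparing with (\ref{J1}), this is precisely $\{\Gamma(Th\circ\rho,h\circ\eta)(\Gamma(g,h)u)(f)\}^{\vee}$, which establishes (iii).
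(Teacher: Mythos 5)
Your proof of (iii) is correct and follows essentially the same route as the paper: apply $u^{\vee}=(u^{a}\circ\pi)\dot{\partial}_{a}$ to the local formula for $f^{c}$, observe that $\dot{\partial}_{a}$ annihilates the factors pulled back from $M$ and acts on $U^{c}$ via $\dot{\partial}_{a}(U^{c})=\delta_{a}^{c}$, and identify the result with the vertical lift through the identity (\ref{J1}). Parts (i) and (ii), which the paper omits as routine, are handled correctly by additivity and the Leibniz rule for $\partial_{i}$.
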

\begin{proof}
We only prove (iii). Using (\ref{J1}), (\ref{J2}) and (\ref{J4}) we
obtain
\begin{align*}
u^{\vee }( f^{c}) & =(u^{b}\circ \pi )\dot{\partial}_{b}(U^{b}(
g_{b}^{\alpha }\circ \pi ) (\rho _{\alpha }^{i}\circ
h\circ \pi )\partial _{i}(f\circ h\circ \pi ))\\
& =( g_{b}^{\alpha }\circ \pi ) (u^{b}\circ \pi )(\rho _{\alpha
}^{i}\circ h\circ \pi )\partial _{i}(f\circ h\circ \pi ) \\
& =\{ ( g_{b}^{\alpha }\circ h^{-1}) ( u^{b}\circ h^{-1}) \rho
_{\alpha }^{i}(( \partial _{i}(f\circ
h)) \circ h^{-1})\} \circ h\circ \pi \\
& =\{ \Gamma (Th\circ \rho ,h\circ \eta )(\Gamma ( g,h) u) (f)\}
^{\vee }.
\end{align*}
\end{proof}
\begin{lemma}\label{13}
Let $u$ be a section of $( E,\pi ,M) $. Then
\begin{equation*}
u^{c}( f^{c}) =\{ (\Gamma (Th\circ \rho ,h\circ \eta )(\Gamma ( g,h)
u) (f)\} ^{c},\ \ \ \forall f\in \mathcal{F}(N).
\end{equation*}
\end{lemma}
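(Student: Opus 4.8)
The plan is to reduce everything to a single componentwise identity on the base $M$ and to recognize that identity as the structure equation (\ref{P1}). The key observation is that the complete lift of any $\varphi\in\mathcal{F}(N)$ is a ``hat'': comparing the definition of $\varphi^{c}$ with the function $\hat{\omega}$ introduced before Theorem \ref{T1}, and using $\partial_{i}(\varphi\circ h\circ\pi)=\frac{\partial(\varphi\circ h)}{\partial x^{i}}\circ\pi$ together with $\dot{\partial}_{a}(\varphi\circ h\circ\pi)=0$, one gets $\varphi^{c}=\widehat{\omega^{\varphi}}$, where $\omega^{\varphi}\in\Lambda^{1}(E,\pi,M)$ is the $1$-form with components $(\omega^{\varphi})_{a}=g_{a}^{\alpha}(\rho_{\alpha}^{i}\circ h)\frac{\partial(\varphi\circ h)}{\partial x^{i}}$. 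Applying this with $\varphi=f$ and invoking condition (ii) of Theorem \ref{T1} gives
\begin{equation*}
u^{c}(f^{c})=u^{c}(\widehat{\omega^{f}})=\widehat{(g,h)\mathcal{L}_{u}\omega^{f}}.
\end{equation*}

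First I would record that the hat map $\omega\mapsto\hat{\omega}$ is injective: since $\dot{\partial}_{b}(U^{a})=\delta_{b}^{a}$ and $\partial_{i}(U^{a})=0$, one recovers $\dot{\partial}_{b}(\hat{\omega})=\omega_{b}\circ\pi$. Hence it suffices to prove the equality of $1$-forms
\begin{equation*}
(g,h)\mathcal{L}_{u}\omega^{f}=\omega^{g(f)},\qquad g(f):=\Gamma(Th\circ\rho,h\circ\eta)(\Gamma(g,h)u)(f),
\end{equation*}
since the right-hand side $(g(f))^{c}=\widehat{\omega^{g(f)}}$ by the same device, its base expression $g(f)\circ h=g_{b}^{\alpha}u^{b}(\rho_{\alpha}^{i}\circ h)\frac{\partial(f\circ h)}{\partial x^{i}}$ being read off from the local form of $\Gamma(Th\circ\rho,h\circ\eta)(\Gamma(g,h)u)(f)$ in (\ref{J1}).

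Next I would compute both components explicitly on $M$. For the left-hand side I would use the formula for $(g,h)\mathcal{L}_{u}\omega(s_{a})$ established inside the proof of Theorem \ref{T1}, namely $g_{b}^{\alpha}u^{b}(\rho_{\alpha}^{i}\circ h)\frac{\partial\omega_{a}}{\partial x^{i}}-\tilde{g}_{\gamma}^{b}\omega_{b}(K_{a}^{\gamma}(u)\circ h)$, with $\omega=\omega^{f}$ and with $K_{a}^{\gamma}(u)$ as given there; the inverse relation $\tilde{g}_{\gamma}^{b}g_{b}^{\delta}=\delta_{\gamma}^{\delta}$ collapses $\tilde{g}_{\gamma}^{b}(\omega^{f})_{b}$ to $(\rho_{\gamma}^{k}\circ h)\frac{\partial(f\circ h)}{\partial x^{k}}$. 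For the right-hand side I would differentiate $(\omega^{g(f)})_{a}=g_{a}^{\beta}(\rho_{\beta}^{i}\circ h)\frac{\partial(g(f)\circ h)}{\partial x^{i}}$ by the Leibniz rule. Comparing, within the left-hand side the two terms carrying $\frac{\partial g_{a}^{\gamma}}{\partial x^{i}}$ (one from differentiating $(\omega^{f})_{a}$, one from the first summand of $K_{a}^{\gamma}(u)$) cancel; the second-order terms $\frac{\partial^{2}(f\circ h)}{\partial x^{i}\partial x^{k}}$ on both sides match after using their symmetry in $i,k$; and the terms built from $\frac{\partial(u^{c}g_{c}^{\gamma})}{\partial x^{i}}$ match directly.

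The main obstacle is the remaining block of first-order terms, which after factoring out $u^{c}g_{c}^{\alpha}g_{a}^{\beta}\frac{\partial(f\circ h)}{\partial x^{k}}$ reduces exactly to
\begin{equation*}
(L_{\alpha\beta}^{\gamma}\circ h)(\rho_{\gamma}^{k}\circ h)=(\rho_{\alpha}^{i}\circ h)\frac{\partial(\rho_{\beta}^{k}\circ h)}{\partial x^{i}}-(\rho_{\beta}^{j}\circ h)\frac{\partial(\rho_{\alpha}^{k}\circ h)}{\partial x^{j}},
\end{equation*}
i.e. to (\ref{P1}); this is where the generalized Lie algebroid structure is genuinely used, and keeping the numerous index relabelings straight, so that the antisymmetry in $\alpha\leftrightarrow\beta$ produced by $L_{\alpha\beta}^{\gamma}$ is correctly matched, is the only delicate point. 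A completely equivalent but more computational route would bypass the hat reduction and expand $u^{c}(f^{c})$ directly from (\ref{J3}) and the definition of $f^{c}$, using $u^{c}(U^{a})=B^{a}$ and $u^{c}(\psi\circ\pi)=A^{i}(\frac{\partial\psi}{\partial x^{i}}\circ\pi)$ for base functions $\psi$; this leads to the same final reliance on (\ref{P1}).
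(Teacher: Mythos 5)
Your proposal is correct, and it reorganizes the argument in a way that genuinely differs from the paper's proof. The paper proceeds entirely on $E$: it expands $\{\Gamma(Th\circ\rho,h\circ\eta)(\Gamma(g,h)u)(f)\}^{c}$ from the definition of the complete lift of a function, expands $u^{c}(f^{c})$ from the explicit coordinate formula (\ref{J3}) for $u^{c}$ (including the $B^{a}$-coefficients built from $K_{a}^{\gamma}(u)$), and matches the two long expressions term by term, closing with (\ref{P1}). You instead exploit the \emph{characterization} of $u^{c}$ in Theorem \ref{T1}: the identification $f^{c}=\widehat{\omega^{f}}$ with $(\omega^{f})_{a}=g_{a}^{\alpha}(\rho_{\alpha}^{i}\circ h)\frac{\partial(f\circ h)}{\partial x^{i}}$ is valid (since $\partial_{i}(f\circ h\circ\pi)=\frac{\partial(f\circ h)}{\partial x^{i}}\circ\pi$), condition (ii) then converts $u^{c}(f^{c})$ into $\widehat{(g,h)\mathcal{L}_{u}\omega^{f}}$, and injectivity of the hat map (via $\dot{\partial}_{b}(\hat{\omega})=\omega_{b}\circ\pi$) pushes the whole identity down to a componentwise equation of $1$-forms on $M$. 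This buys you a shorter computation free of the $U^{a}$ bookkeeping, and it makes visible that the lemma is exactly the statement $(g,h)\mathcal{L}_{u}\omega^{f}=\omega^{g(f)}$, i.e. that the covariant Lie $(g,h)$-derivative commutes with the ``$(g,h)$-differential'' of $f$; the cancellation pattern you describe (the $\frac{\partial g_{a}^{\gamma}}{\partial x^{i}}$ terms, the symmetric second-order terms, the $\frac{\partial(g_{c}^{\gamma}u^{c})}{\partial x^{i}}$ terms, and the residual block reducing to (\ref{P1})) is exactly right. The structural input is the same in both proofs — the compatibility (\ref{P1}) of the anchor with the structure functions — so nothing is gained in generality, only in transparency; and your closing remark correctly identifies the paper's route as the direct-expansion alternative.
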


\begin{proof}
Using (\ref{J3}) and (\ref{J4}) we get%
\begin{equation*}
\begin{array}{l}
\{ (\Gamma (Th\circ \rho ,h\circ \eta )( \Gamma ( g,h) u%
) (f)\} ^{c} \\
=( ( g_{e}^{\alpha }\circ h^{-1}) ( u^{e}\circ h^{-1}) \rho _{\alpha
}^{i}\partial _{i}(f\circ h)\circ h^{-1})
^{c} \\
=U^{b}( g_{b}^{\beta }\circ \pi ) (\rho _{\beta }^{j}\circ h\circ
\pi )\partial _{j}(( g_{e}^{\alpha }\circ \pi )( u^{e}\circ \pi ) (
\rho _{\alpha }^{i}\circ h\circ \pi)
\partial _{i}(f\circ h\circ \pi ))\\
=U^{b}( g_{b}^{\beta }\circ \pi ) (\rho _{\beta }^{j}\circ h\circ
\pi )(\partial _{j}( g_{e}^{\alpha }\circ \pi ))( u^{e}\circ \pi ) (
\rho _{\alpha }^{i}\circ h\circ \pi
) \partial _{i}(f\circ h\circ \pi ) \\
+U^{b}( g_{b}^{\beta }\circ \pi ) (\rho _{\beta }^{j}\circ h\circ
\pi )( g_{e}^{\alpha }\circ \pi ) ( \partial _{j}( u^{e}\circ \pi )
)( \rho _{\alpha }^{i}\circ h\circ \pi
) \partial _{i}(f\circ h\circ \pi ) \\
+U^{b}( g_{b}^{\beta }\circ \pi ) (\rho _{\beta }^{j}\circ h\circ
\pi )( g_{e}^{\alpha }\circ \pi ) ( u^{e}\circ \pi )
(\partial _{j}( \rho _{\alpha }^{i}\circ h\circ \pi ) %
)\partial _{i}(f\circ h\circ \pi ) \\
+U^{b}( g_{b}^{\beta }\circ \pi ) (\rho _{\beta }^{j}\circ h\circ
\pi )( g_{e}^{\alpha }\circ \pi) ( u^{e}\circ \pi ) ( \rho _{\alpha
}^{i}\circ h\circ \pi ) \partial _{j}\partial
_{i}(f\circ h\circ \pi ).%
\end{array}%
\end{equation*}
Again, (\ref{J3}) and (\ref{J4}) give us%
\begin{equation*}
\begin{array}{l}
u^{c}( f^{c}) =( g_{e}^{\alpha }\circ \pi ) (u^{e}\circ \pi )( \rho
_{\alpha }^{i}\circ h\circ \pi )
\partial _{i}( U^{b}( g_{b}^{\beta }\circ \pi)
(\rho _{\beta
}^{j}\circ h\circ \pi )\partial _{j}(f\circ h\circ \pi )) \\
-U^{a}( K_{a}^{\gamma }( u) \circ h\circ \pi) ( \tilde{g}_{\gamma
}^{b}\circ \pi ) \dot{\partial}_{b}( U^{b}( g_{b}^{\beta }\circ \pi)
(\rho _{\beta }^{j}\circ h\circ
\pi )\partial _{j}(f\circ h\circ \pi )) \\
=U^{b}( g_{e}^{\alpha }\circ \pi ) (u^{e}\circ \pi )( \rho _{\alpha
}^{i}\circ h\circ \pi )( \partial _{i}( g_{b}^{\beta }\circ \pi ) )
(\rho _{\beta }^{j}\circ h\circ \pi
)\partial _{j}(f\circ h\circ \pi ) \\
+U^{b}( g_{e}^{\alpha }\circ \pi) (u^{e}\circ \pi )( \rho _{\alpha
}^{i}\circ h\circ \pi ) ( g_{b}^{\beta }\circ \pi ) (
\partial _{i}(\rho _{\beta }^{j}\circ h\circ \pi ))
\partial _{j}(f\circ h\circ \pi ) \\
+U^{b}( g_{e}^{\alpha }\circ \pi ) (u^{e}\circ \pi )( \rho _{\alpha
}^{i}\circ h\circ \pi ) ( g_{b}^{\beta }\circ \pi ) (\rho _{\beta
}^{j}\circ h\circ \pi )\partial _{i}\partial
_{j}(f\circ h\circ \pi ) \\
-U^{a}( K_{a}^{\beta }( u) \circ h\circ \pi ) (\rho
_{\beta }^{j}\circ h\circ \pi )\partial _{j}(f\circ h\circ \pi ) \\
=U^{b}( g_{e}^{\alpha }\circ \pi ) (u^{e}\circ \pi )( \rho _{\alpha
}^{i}\circ h\circ \pi ) ( \partial _{i}( g_{b}^{\beta }\circ \pi ) )
(\rho _{\beta }^{j}\circ h\circ \pi
)\partial _{j}(f\circ h\circ \pi ) \\
+U^{b}( g_{e}^{\alpha }\circ \pi ) (u^{e}\circ \pi )( \rho _{\alpha
}^{i}\circ h\circ \pi ) ( g_{b}^{\beta }\circ \pi ) (
\partial _{i}(\rho _{\beta }^{j}\circ h\circ \pi ))
\partial _{j}(f\circ h\circ \pi ) \\
+U^{b}( g_{e}^{\alpha }\circ \pi ) (u^{e}\circ \pi )( \rho _{\alpha
}^{i}\circ h\circ \pi ) ( g_{b}^{\beta }\circ \pi ) (\rho _{\beta
}^{j}\circ h\circ \pi )\partial _{i}\partial
_{j}(f\circ h\circ \pi ) \\
-U^{b}( ( g_{c}^{\beta }u^{c}) \circ \pi ( \rho _{\beta }^{j}\circ
h\circ \pi ) \partial _{j}( g_{b}^{\gamma }\circ \pi ) ) (\rho
_{\gamma }^{k}\circ h\circ \pi )\partial _{k}(f\circ
h\circ \pi ) \\
+U^{b}( g_{b}^{\alpha }\circ \pi ) \rho _{\alpha }^{i}\circ
h\circ \pi ( \partial _{i}( g_{c}^{\gamma }u^{c}) \circ \pi %
) (\rho _{\gamma }^{k}\circ h\circ \pi )\partial _{k}(f\circ h\circ
\pi ) \\
-U^{b}( g_{c}^{\alpha }u^{c}) \circ \pi ( L_{\alpha \beta }^{\gamma
}\circ h\circ \pi ) ( g_{b}^{\beta }\circ \pi )
(\rho _{\gamma }^{k}\circ h\circ \pi )\partial _{k}(f\circ h\circ \pi ).%
\end{array}%
\end{equation*}
Using (\ref{P1}) in the above equation, the proof completes.
\end{proof}

\begin{definition}
The \emph{complete }$( g,h) $-lift $u^{C}$ of a section $%
u\in \Gamma (E,\pi ,M)$ is the section of $( (\rho ,\eta )TE,(\rho
,\eta )\tau _{E},E) $ given by%
\begin{equation}  \label{com}
\begin{array}{cl}
u^{C} & =( g_{e}^{\alpha }\circ \pi ) (u^{e}\circ \pi )T_{\alpha
}\oplus ( g_{e}^{\alpha }\circ \pi ) ( u^{e}\circ \pi
) ( \rho _{\alpha }^{i}\circ h\circ \pi ) \partial _{i} \\
& -U^{a}( K_{a}^{\gamma }( u) \circ h\circ \pi ) ( \tilde{g}_{\gamma
}^{b}\circ \pi ) ( 0_{( h\circ \pi
) ^{\ast }E}\oplus \dot{\partial}_{b}) \\
& =( ( g_{e}^{\alpha }\circ \pi ) (u^{e}\circ \pi )) T_{\alpha
}\oplus ( \rho _{\alpha }^{i}\circ h\circ \pi )
\partial _{i} \\
& -U^{a}(K_{a}^{\gamma }( u) \circ h\circ \pi ) ( \tilde{g}_{\gamma
}^{b}\circ \pi ) ( 0_{( h\circ \pi
) ^{\ast }E}\oplus \dot{\partial}_{b}) \\
& =( ( g_{e}^{\alpha }\circ \pi ) (u^{e}\circ \pi ))
\tilde{\partial}_{\alpha }-U^{a}( K_{a}^{\gamma }( u) \circ
h\circ \pi ) ( \tilde{g}_{\gamma }^{b}\circ \pi ) \dot{%
\tilde{\partial}}_{b}.%
\end{array}%
\end{equation}
\end{definition}
Using the above definition, we can obtain
\begin{equation*}
\Gamma ( \tilde{\rho},Id_{E}) (u^{C})=u^{c}.
\end{equation*}

In the particular case of Lie algebroids, $( g,\eta ,h) =(
Id_{E},Id_{M},Id_{M}) ,$ the complete lifts are given by the
equality:
\begin{equation*}
u^{c}=\{(u^{a}\rho _{a}^{i})\circ \pi \}\partial _{i}+y^{b}\{(\rho
_{b}^{i}\partial _{i}u^{a}+u^{d}L_{bd}^{a})\circ \pi \}\dot{\partial}%
_{a},
\end{equation*}%
\begin{equation*}
u^{C}=(u^{a}\circ \pi )\tilde{\partial}_{a}+y^{b}\{(\rho _{b}^{i}\partial
_{i}u^{a}+u^{d}L_{bd}^{a})\circ \pi \}\dot{\tilde{\partial}}_{a},
\end{equation*}%
and in the classical case, $\rho =Id_{TM},$ the complete lifts are given by
the equality:%
\begin{equation*}
u^{C}=(X^{i}\circ \pi )\partial _{i}+y^{j}(\partial _{j}X^{i}\circ \pi )\dot{%
\partial}_{i}=u^{c}.
\end{equation*}
\begin{definition}
If $u=u^{a}s_{a}$ is a section of $( E,\pi ,M) $, then we
introduce the vertical $( g,h) $-lift of $u$ as section of $%
( ( \rho ,\eta ) TE,( \rho ,\eta ) \tau _{E},E) $ given by
\begin{equation*}
u^{V}=0_{(h\circ \pi )^{\ast }E}\oplus u^{\vee }.
\end{equation*}
\end{definition}
If $u=u^{a}e_{a}\in \Gamma (E,\pi ,M)$, then in the locally
expressions we get
\begin{align}\label{L4}
u^{V}&=0_{(h\circ \pi )^{\ast }E}\oplus (u^{a}\circ \pi
)\dot{\partial}_{a}=(u^{a}\circ \pi )(0_{(h\circ \pi )^{\ast
}E}\oplus \dot{\partial}_{a})=(u^{a}\circ \pi
)\dot{\tilde{\partial}}_{a}.
\end{align}
In particular, we have $s_{a}^{V}=\dot{\tilde{\partial}}_{a}$.
\begin{remark}
Using the almost tangent $( g,h) $-structure $\mathcal{J}_{( g,h) }$ given by\textrm{\ }%
\begin{equation*}
\begin{array}{rcl}
\Gamma ( ( \rho ,\eta ) TE,( \rho ,\eta ) \tau _{E},E) &
^{\underrightarrow{\mathcal{J}_{( g,h) }}} & \Gamma ( ( \rho ,\eta )
TE,( \rho ,\eta ) \tau
_{E},E),\\
Z^{\alpha }\tilde{\partial}_{\alpha
}+Y^{b}{\dot{{\tilde{\partial}}}} _{b} & \longmapsto & (
\tilde{g}_{\alpha }^{b}\circ \pi ) Z^{\alpha
}{\dot{{\tilde{\partial}}}}_{b},
\end{array}%
\end{equation*}%
it results that $\mathcal{J}_{( g,h) }( u^{C}) =u^{V}.$
\end{remark}

\begin{theorem}\label{KM}
The Lie brackets of vertical and complete $( g,h) $-lifts satisfy
the following equalities:
\begin{eqnarray*}
i)\ [u^{V},v^{V}]_{(\rho ,\eta )TE} &=&0,\ \  \\
ii)\ [u^{V},v^{C}]_{(\rho ,\eta )TE} &=&\{ \Gamma ( g^{-1},h^{-1})
[\Gamma ( g,h) u,\Gamma ( g,h)
v]_{F,h}\} ^{V}, \\
iii)\ [u^{C},v^{C}]_{(\rho ,\eta )TE} &=&\{ \Gamma ( g^{-1},h^{-1})
[\Gamma ( g,h) u,\Gamma ( g,h) v]_{F,h}\} ^{C}.
\end{eqnarray*}
\end{theorem}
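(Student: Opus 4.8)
The plan is to compute all three brackets directly from the local expressions, exploiting the fact that $[\,,]_{(\rho,\eta)TE}$ splits as the direct sum of the pull-back bracket $[\,,]_{(h\circ\pi)^{\ast}F}$ on the $\tilde{\partial}_{\alpha}$-block and the ordinary tangent bracket $[\,,]_{TE}$ on the $(\rho_{\alpha}^{i}\circ h\circ\pi)\partial_{i}+\dot{\partial}_{a}$-block. Throughout I abbreviate $w:=\Gamma(g^{-1},h^{-1})[\Gamma(g,h)u,\Gamma(g,h)v]_{F,h}$, whose components satisfy $(g_{e}^{\gamma}w^{e})\circ h^{-1}$ equal to the components of $[\Gamma(g,h)u,\Gamma(g,h)v]_{F,h}$, and I keep in mind the identity $K_{a}^{\gamma}(u)t_{\gamma}=[\Gamma(g,h)u,\Gamma(g,h)s_{a}]_{F,h}$ established in the proof of Theorem \ref{T1}. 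The guiding principle is that the $\tilde{\partial}_{\alpha}$-coefficients of each bracket are produced by the first summand alone, while the $\dot{\tilde{\partial}}_{b}$-coefficients come from the second.

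For (i) I would observe that by (\ref{L4}) both $u^{V}=(u^{a}\circ\pi)\dot{\tilde{\partial}}_{a}$ and $v^{V}$ are purely vertical, so the first summand vanishes by bilinearity while in the second summand the operators $\dot{\partial}_{a}$ annihilate the base-pulled-back coefficients $u^{a}\circ\pi$ and $v^{b}\circ\pi$; hence $[u^{V},v^{V}]_{(\rho,\eta)TE}=0$. For (ii) the factor $u^{V}$ again has no $\tilde{\partial}_{\alpha}$-part, so the pull-back summand is zero and the answer is automatically vertical, matching the right-hand side $\{\cdots\}^{V}$. It then remains to evaluate the tangent summand $[(u^{a}\circ\pi)\dot{\partial}_{a},\,v^{c}]_{TE}$ with $v^{c}$ read off from (\ref{J3}); the only surviving contributions come from $\dot{\partial}_{a}(U^{c})=\delta_{a}^{c}$ acting on the vertical coefficient of $v^{c}$ and from $\partial_{j}$ hitting $u^{a}\circ\pi$, and collecting them and rewriting through $K_{a}^{\gamma}(v)t_{\gamma}=[\Gamma(g,h)v,\Gamma(g,h)s_{a}]_{F,h}$ and the component formula for $\Gamma(g^{-1},h^{-1})$ identifies the result as $w^{V}$.

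For (iii), which is the real work, both summands are active. The $\tilde{\partial}_{\alpha}$-component is produced entirely by the pull-back bracket: expanding $[((g_{e}^{\alpha}u^{e})\circ\pi)T_{\alpha},((g_{f}^{\beta}v^{f})\circ\pi)T_{\beta}]_{(h\circ\pi)^{\ast}F}$ through the three defining rules introduces the structure functions $L_{\alpha\beta}^{\gamma}$ and the anchor terms, and I would match this against the first component $((g_{e}^{\gamma}w^{e})\circ\pi)T_{\gamma}$ of $w^{C}$ by recognizing the expansion as the coordinate form of $[\Gamma(g,h)u,\Gamma(g,h)v]_{F,h}$. For the $\dot{\tilde{\partial}}_{b}$-component I would use that $\Gamma(\tilde{\rho},Id_{E})$ is a Lie algebroid morphism, so $\Gamma(\tilde{\rho},Id_{E})[u^{C},v^{C}]_{(\rho,\eta)TE}=[u^{c},v^{c}]_{TE}$, and then identify $[u^{c},v^{c}]_{TE}=w^{c}$ by checking the two characterizing conditions of Theorem \ref{T1} for the section $w$: condition (i) holds because relatedness is stable under brackets and, by Remark \ref{R1}, $\Gamma(Th\circ\rho,h\circ\eta)$ carries $[\Gamma(g,h)u,\Gamma(g,h)v]_{F,h}$ to the $TN$-bracket of the anchored images; condition (ii) follows from the commutator identity $[(g,h)\mathcal{L}_{u},(g,h)\mathcal{L}_{v}]=(g,h)\mathcal{L}_{w}$ together with $u^{c}(\hat{\omega})=\widehat{(g,h)\mathcal{L}_{u}\omega}$. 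Since $\tilde{\rho}$ is injective on the vertical block, $\Gamma(\tilde{\rho},Id_{E})(w^{C})=w^{c}$ then pins down the $\dot{\tilde{\partial}}_{b}$-coefficient, and combined with the $\tilde{\partial}_{\alpha}$-matching this gives $[u^{C},v^{C}]_{(\rho,\eta)TE}=w^{C}$.

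I expect the main obstacle to be the bookkeeping in (iii): reconciling the $U^{a}$-weighted vertical coefficients and the contractions of $g,\tilde{g}$ against $K$ ultimately rests on the Jacobi identity for $[\,,]_{F,h}$ (equivalently the cocycle identity satisfied by $K_{a}^{\gamma}$, which is also what validates condition (ii)) and on the structure-function relation (\ref{P1}). The clean splitting of $[\,,]_{(\rho,\eta)TE}$ into its pull-back and tangent blocks is precisely what keeps this computation organized and isolates the Jacobi input to the single vertical coefficient.
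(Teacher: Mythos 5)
Your proposal is correct, and for parts (i) and (ii) it follows essentially the same route as the paper: a direct local computation in the $TE$-block, which is what the paper does when it derives the expression (\ref{Ar1}) for $[u^{V},v^{C}]_{(\rho,\eta)TE}$ and matches it against the vertical lift (\ref{Ar2}). For part (iii), however, you take a genuinely different path. The paper computes both sides completely in coordinates, arriving at $\{\Gamma(g^{-1},h^{-1})[\Gamma(g,h)u,\Gamma(g,h)v]_{F,h}\}^{C}=\mathcal{A}^{\gamma}\tilde{\partial}_{\gamma}+\mathcal{B}^{r}\dot{\tilde{\partial}}_{r}$ versus $[u^{C},v^{C}]_{(\rho,\eta)TE}=\mathcal{A}^{\gamma}\tilde{\partial}_{\gamma}+(\mathcal{B}^{r}+\mathcal{C}^{r})\dot{\tilde{\partial}}_{r}$ as in (\ref{lili00})--(\ref{lili000}), and then kills the discrepancy $\mathcal{C}^{r}$ of (\ref{lili0}) by a long index manipulation based on $\partial_{j}(g_{d}^{\gamma})=-\partial_{j}(\tilde{g}_{\alpha}^{a})g_{a}^{\gamma}g_{d}^{\alpha}$. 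You instead match the $\tilde{\partial}_{\alpha}$-components through the pull-back bracket (exactly the paper's $\mathcal{A}^{\gamma}$ step) and determine the vertical component indirectly: since the anchor $\Gamma(\tilde{\rho},Id_{E})$ intertwines the brackets by the very definition of $[\,,]_{(\rho,\eta)TE}$ and satisfies $\Gamma(\tilde{\rho},Id_{E})(u^{C})=u^{c}$, it suffices to prove $[u^{c},v^{c}]_{TE}=w^{c}$, which you obtain from the uniqueness clause of Theorem \ref{T1} by checking its two characterizing conditions for $w$; injectivity of $\tilde{\rho}$ on the $\dot{\tilde{\partial}}_{a}$-block (together with the already-established agreement of the $\tilde{\partial}_{\alpha}$-parts) then pins down the vertical coefficients. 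This is cleaner and isolates the Jacobi-identity input, but it rests on two facts the paper never records: stability of $(h\circ\pi)$-relatedness under Lie brackets, and the commutator identity $[(g,h)\mathcal{L}_{u},(g,h)\mathcal{L}_{v}]=(g,h)\mathcal{L}_{w}$ on $\Lambda^{1}(E,\pi,M)$, which in turn needs $(g^{-1},h^{-1})^{\ast}\circ(g,h)^{\ast}=\mathrm{id}$, the Jacobi identity for $[\,,]_{F,h}$, and Remark \ref{R1}. Both are standard and hold here because $(g,h)$ is locally invertible, so your argument goes through; just be sure to actually supply the commutator lemma, since it is the load-bearing step that replaces the paper's verification of $\mathcal{C}^{r}=0$.
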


\begin{proof}
Direct calculation gives us
\begin{align}
\lbrack u^{V},v^{C}]_{(\rho ,\eta )TE}& =-\{((u^{a}\tilde{g}_{\gamma
}^{b})\circ \pi )(((g_{c}^{\beta }v^{c}(\rho _{\beta }^{j}\circ
h))\circ
\pi )\partial _{j}(g_{a}^{\gamma }\circ \pi )  \notag  \label{Ar1} \\
& \ \ \ -((g_{a}^{\alpha }(\rho _{\alpha }^{i}\circ h))\circ \pi )\partial
_{i}((g_{c}^{\gamma }v^{c})\circ \pi )+((g_{c}^{\alpha }v^{c}g_{a}^{\beta
}(L_{\alpha \beta }^{\gamma }\circ h))\circ \pi ))  \notag \\
& \ \ \ +((g_{c}^{\alpha }v^{c}(\rho _{\alpha }^{i}\circ h))\circ
\pi )\partial _{i}(u^{b}\circ \pi )\}\dot{\tilde{\partial}}_{b}.
\end{align}%
On the other hand, we have
\begin{align*}
& [\Gamma (g,h)(u),\Gamma (g,h)(v)]_{F,h}=[((g_{a}^{\alpha
}u^{a})\circ
h^{-1})t_{\alpha },((g_{b}^{\beta }v^{b})\circ h^{-1})t_{\beta }]_{F,h} \\
& =((g_{a}^{\alpha }u^{a})\circ h^{-1})\Gamma (Th\circ \rho ,h\circ \eta
)(t_{\alpha })((g_{b}^{\beta }v^{b})\circ h^{-1})t_{\beta } \\
& \ \ \ -((g_{b}^{\beta }v^{b})\circ h^{-1})\Gamma (Th\circ \rho ,h\circ
\eta )(t_{\beta })((g_{a}^{\alpha }u^{a})\circ h^{-1})t_{\alpha } \\
& \ \ \ +((g_{a}^{\alpha }u^{a}g_{b}^{\beta }v^{b})\circ h^{-1})L_{\alpha
\beta }^{\gamma }t_{\gamma } \\
& =\{((g_{a}^{\alpha }u^{a})\circ h^{-1})\rho _{\alpha }^{i}\frac{%
\partial (g_{b}^{\gamma }v^{b})}{\partial x^{i}}\circ h^{-1}-((g_{b}^{\beta
}v^{b})\circ h^{-1})\rho _{\beta }^{i}\frac{\partial (g_{a}^{\gamma }u^{a})}{%
\partial x^{i}}\circ h^{-1} \\
& \ \ \ +((g_{a}^{\alpha }u^{a}g_{b}^{\beta }v^{b})\circ
h^{-1})L_{\alpha \beta }^{\gamma }\}t_{\gamma }.
\end{align*}%
The above equation gives us
\begin{align*}
& \Gamma (g^{-1},h^{-1})([\Gamma (g,h)(u),\Gamma (g,h)(v)]_{F,h}=\tilde{g}%
_{\gamma }^{d}\{g_{a}^{\alpha }u^{a}(\rho _{\alpha }^{i}\circ h)\frac{%
\partial (g_{b}^{\gamma }v^{b})}{\partial x^{i}} \\
& \ \ \ -g_{b}^{\beta }v^{b}(\rho _{\beta }^{i}\circ
h)\frac{\partial (g_{a}^{\gamma }u^{a})}{\partial
x^{i}}+g_{a}^{\alpha }u^{a}g_{b}^{\beta }v^{b}(L_{\alpha \beta
}^{\gamma }\circ h)\}s_{d}.
\end{align*}%
Thus
\begin{align}
& (\Gamma (g^{-1},h^{-1})[\Gamma (g,h)(u),\Gamma (g,h)(v)]_{F,h})%
^{V}=(\tilde{g}_{\gamma }^{d}\circ \pi )\{((v^{b}g_{a}^{\alpha
}u^{a}(\rho _{\alpha }^{i}\circ h))\circ \pi )\partial
_{i}(g_{b}^{\gamma
}\circ \pi )  \notag  \label{Ar2} \\
& \ \ \ +((g_{b}^{\gamma }g_{a}^{\alpha }u^{a}(\rho _{\alpha }^{i}\circ
h))\circ \pi )\partial _{i}(v^{b}\circ \pi )-((u^{a}g_{b}^{\beta }v^{b}(\rho
_{\beta }^{i}\circ h))\circ \pi )\partial _{i}(g_{a}^{\gamma }\circ \pi )
\notag \\
& \ \ \ -((g_{a}^{\gamma }g_{b}^{\beta }v^{b}(\rho _{\beta }^{i}\circ
h))\circ \pi )\partial _{i}(u^{a}\circ \pi )+(g_{a}^{\alpha
}u^{a}g_{b}^{\beta }v^{b}(L_{\alpha \beta }^{\gamma }\circ h))\circ \pi %
\}\dot{\tilde{\partial}}_{d}.
\end{align}%
From (\ref{Ar1}) and (\ref{Ar2}) we get (ii). Now we prove (iii). we
have
\begin{equation}
\{\Gamma (g^{-1},h^{-1})[\Gamma (g,h)u,\Gamma (g,h)v]_{F,h}\}^{C}=\mathcal{A}%
^{\gamma }{\tilde{\partial}}_{\gamma }+\mathcal{B}^{r}{\dot{\tilde{\partial}}%
_{r}},  \label{lili00}
\end{equation}%
where
\begin{align}
\mathcal{A}^{\gamma }& =((g_{e}^{\alpha }u^{e}(\rho _{\alpha }^{i}\circ
h))\circ \pi )\partial _{i}((g_{c}^{\lambda }v^{c})\circ \pi
)-((g_{c}^{\beta }v^{c}(\rho _{\beta }^{i}\circ h))\circ \pi )\partial
_{i}((g_{e}^{\gamma }u^{e})\circ \pi )  \notag \\
& \ \ \ +((g_{e}^{\alpha }u^{e}g_{c}^{\beta }v^{c}(L_{\alpha \beta }^{\gamma
}\circ h)\circ \pi ),
\end{align}%
and
\begin{align}
\mathcal{B}^{r}& =-U^{d}(\tilde{g}_{\gamma }^{r}\circ \pi )\Big\{%
((g_{a}^{\alpha }u^{a}((\rho _{\alpha }^{i}\rho _{\lambda }^{j})\circ
h))\circ \pi )\partial _{i}((g_{b}^{\lambda }v^{b})\circ \pi )\partial
_{j}(g_{d}^{\gamma }\circ \pi )  \notag \\
& \ \ \ -((g_{b}^{\beta }v^{b}((\rho _{\beta }^{i}\rho _{\lambda }^{j})\circ
h))\circ \pi )\partial _{i}((g_{a}^{\lambda }u^{a})\circ \pi )\partial
_{j}(g_{d}^{\gamma }\circ \pi )  \notag \\
& \ \ \ \ +((g_{a}^{\alpha }u^{a}g_{b}^{\beta }v^{b}((L_{\alpha \beta
}^{\lambda }\rho _{\lambda }^{j})\circ h))\circ \pi )\partial
_{j}(g_{d}^{\gamma }\circ \pi )  \notag \\
& \ \ \ -((g_{d}^{\lambda }((\rho _{\lambda }^{j}\rho _{\alpha }^{i})\circ
h))\circ \pi )\partial _{j}((g_{a}^{\alpha }u^{a})\circ \pi )\partial
_{i}((g_{b}^{\gamma }v^{b})\circ \pi )  \notag \\
& \ \ \ -((g_{d}^{\lambda }g_{a}^{\alpha }u^{a}(\rho _{\lambda }^{j}\circ
h))\circ \pi )\partial _{j}(\rho _{\alpha }^{i}\circ h\circ \pi )\partial
_{i}((g_{b}^{\gamma }v^{b})\circ \pi )  \notag \\
& \ \ \ \ -((g_{d}^{\lambda }g_{a}^{\alpha }u^{a}((\rho _{\alpha }^{i}\rho
_{\lambda }^{j})\circ h))\circ \pi )\partial _{j}\partial
_{i}((g_{b}^{\gamma }v^{b})\circ \pi )  \notag \\
& \ \ \ \ +((g_{d}^{\lambda }((\rho _{\lambda }^{j}\rho _{\beta }^{i})\circ
h))\circ \pi )\partial _{j}((g_{b}^{\beta }v^{b})\circ \pi )\partial
_{i}((g_{a}^{\gamma }u^{a})\circ \pi )  \notag \\
& \ \ \ \ +((g_{d}^{\lambda }g_{b}^{\beta }v^{b}(\rho _{\lambda }^{j}\circ
h))\circ \pi )\partial _{j}(\rho _{\beta }^{i}\circ h\circ \pi )\partial
_{i}((g_{a}^{\gamma }u^{a})\circ \pi )  \notag \\
& \ \ \ \ +((g_{d}^{\lambda }g_{b}^{\beta }v^{b}((\rho _{\beta }^{i}\rho
_{\lambda }^{j})\circ h))\circ \pi )\partial _{j}\partial
_{i}((g_{a}^{\gamma }u^{a})\circ \pi )  \notag \\
& \ \ \ \ -((g_{d}^{\lambda }g_{b}^{\beta }v^{b}((L_{\alpha \beta }^{\gamma
}\rho _{\lambda }^{j})\circ h))\circ \pi )\partial _{j}((g_{a}^{\alpha
}u^{a})\circ \pi )  \notag \\
& \ \ \ \ -((g_{d}^{\lambda }g_{a}^{\alpha }u^{a}((L_{\alpha \beta }^{\gamma
}\rho _{\lambda }^{j})\circ h))\circ \pi )\partial _{j}((g_{b}^{\beta
}v^{b})\circ \pi )  \notag \\
& \ \ \ \ -((g_{d}^{\lambda }g_{a}^{\alpha }u^{a}g_{b}^{\beta }v^{b}(\rho
_{\lambda }^{j}\circ h))\circ \pi )\partial _{j}(L_{\alpha \beta }^{\gamma
}\circ h\circ \pi )  \notag \\
& \ \ \ \ +((g_{d}^{\mu }g_{a}^{\alpha }u^{a}((L_{\lambda \mu }^{\gamma
}\rho _{\alpha }^{i})\circ h))\circ \pi )\partial _{i}((g_{b}^{\lambda
}v^{b})\circ \pi )  \notag \\
& \ \ \ \ -((g_{d}^{\mu }g_{b}^{\beta }v^{b}((L_{\lambda \mu }^{\gamma }\rho
_{\beta }^{i})\circ h))\circ \pi )\partial _{i}((g_{a}^{\lambda }u^{a})\circ
\pi )  \notag \\
& \ \ \ \ +((g_{d}^{\mu }g_{a}^{\alpha }u^{a}g_{b}^{\beta }v^{b}((L_{\lambda
\mu }^{\gamma }L_{\alpha \beta }^{\lambda })\circ h))\circ \pi )\Big\}.
\end{align}
Using (\ref{com}) and direct calculation we get
\begin{equation}
\lbrack u^{C},v^{C}]_{(\rho ,\eta )TE}=\mathcal{A}^{\gamma }{\tilde{\partial}%
}_{\gamma }+(\mathcal{B}^{r}+\mathcal{C}^{r}){\dot{\tilde{\partial}}_{r}},
\label{lili000}
\end{equation}%
where
\begin{align}
\mathcal{C}^{r}& =-U^{d}(\tilde{g}_{\gamma }^{r}\circ \pi )\Big\{-((\tilde{g}%
_{\alpha }^{a}g_{c}^{\lambda }u^{c}g_{e}^{\sigma }v^{e}((\rho _{\sigma
}^{i}\rho _{\lambda }^{j})\circ h))\circ \pi )\partial _{j}(g_{d}^{\alpha
}\circ \pi )\partial _{i}(g_{a}^{\gamma }\circ \pi )  \notag  \label{lili0}
\\
& \ \ \ +((\tilde{g}_{\beta }^{b}g_{c}^{\lambda }v^{c}g_{e}^{\sigma
}u^{e}((\rho _{\sigma }^{i}\rho _{\lambda }^{j})\circ h))\circ \pi )\partial
_{j}(g_{d}^{\beta }\circ \pi )\partial _{i}(g_{b}^{\gamma }\circ \pi )
\notag \\
& \ \ \ +((\tilde{g}_{\alpha }^{a}g_{d}^{\lambda }g_{e}^{\sigma }v^{e}((\rho
_{\sigma }^{i}\rho _{\lambda }^{j})\circ h))\circ \pi )\partial
_{j}((g_{c}^{\alpha }u^{c})\circ \pi )\partial _{i}(g_{a}^{\gamma }\circ \pi
)  \notag \\
& \ \ \ -((\tilde{g}_{\beta }^{b}g_{d}^{\lambda }g_{e}^{\sigma }u^{e}((\rho
_{\sigma }^{i}\rho _{\lambda }^{j})\circ h))\circ \pi )\partial
_{j}((g_{c}^{\beta }v^{c})\circ \pi )\partial _{i}(g_{b}^{\gamma }\circ \pi )
\notag \\
& \ \ \ -((\tilde{g}_{\alpha }^{a}g_{c}^{\lambda }u^{c}g_{e}^{\sigma
}v^{e}((L_{\lambda \mu }^{\alpha }\rho _{\sigma }^{i})\circ h))\circ \pi
)\partial _{i}(g_{a}^{\gamma }\circ \pi )  \notag \\
& \ \ \ +((\tilde{g}_{\beta }^{b}g_{c}^{\lambda }v^{c}g_{e}^{\sigma
}u^{e}((L_{\lambda \mu }^{\beta }\rho _{\sigma }^{i})\circ h))\circ \pi
)\partial _{i}(g_{b}^{\gamma }\circ \pi )\Big\}  \notag \\
& \ \ \ -U^{d}((g_{e}^{\alpha }u^{e}g_{c}^{\lambda }v^{c}((\rho _{\alpha
}^{i}\rho _{\lambda }^{j})\circ h))\circ \pi )\partial _{i}(\tilde{g}%
_{\gamma }^{r}\circ \pi )\partial _{j}(g_{d}^{\gamma }\circ \pi )  \notag \\
& \ \ \ +U^{d}((g_{e}^{\alpha }u^{e}g_{d}^{\lambda }((\rho _{\alpha
}^{i}\rho _{\lambda }^{j})\circ h))\circ \pi )\partial _{i}(\tilde{g}%
_{\gamma }^{r}\circ \pi )\partial _{j}((g_{c}^{\gamma }v^{c})\circ \pi )
\notag \\
& \ \ \ -U^{d}((g_{e}^{\alpha }u^{e}g_{c}^{\lambda }v^{c}g_{d}^{\mu
}\partial _{i}(\tilde{g}_{\gamma }^{r}\circ \pi )((L_{\lambda \mu }^{\gamma
}\rho _{\alpha }^{i})\circ h))\circ \pi )  \notag \\
& \ \ \ +U^{d}((g_{e}^{\beta }v^{e}g_{c}^{\lambda }u^{c}((\rho _{\beta
}^{i}\rho _{\lambda }^{j})\circ h))\circ \pi )\partial _{i}(\tilde{g}%
_{\gamma }^{r}\circ \pi )\partial _{j}(g_{d}^{\gamma }\circ \pi )  \notag \\
& \ \ \ -U^{d}((g_{e}^{\beta }v^{e}g_{d}^{\lambda }((\rho _{\beta }^{i}\rho
_{\lambda }^{j})\circ h))\circ \pi )\partial _{i}(\tilde{g}_{\gamma
}^{r}\circ \pi )\partial _{j}((g_{c}^{\gamma }u^{c})\circ \pi )  \notag \\
& \ \ \ +U^{d}((g_{e}^{\beta }v^{e}g_{c}^{\lambda }u^{c}g_{d}^{\mu }\partial
_{i}(\tilde{g}_{\gamma }^{r}\circ \pi )((L_{\lambda \mu }^{\gamma }\rho
_{\beta }^{i})\circ h))\circ \pi ).
\end{align}%
On the other hand, we have
\begin{equation*}
g_{d}^{\gamma }=\tilde{g}_{\alpha }^{a}g_{a}^{\gamma }g_{d}^{\alpha
}.
\end{equation*}%
Derivative of the above expression with respect to $j$, we get
\begin{equation*}
\partial _{j}(g_{d}^{\gamma })=-\partial _{j}(\tilde{g}_{\alpha
}^{a})g_{a}^{\gamma }g_{d}^{\alpha }.
\end{equation*}%
Using the above equation, we obtain
\begin{align}
& -U^{d}((g_{e}^{\alpha }u^{e}g_{c}^{\lambda }v^{c}((\rho _{\alpha }^{i}\rho
_{\lambda }^{j})\circ h))\circ \pi )\partial _{i}(\tilde{g}_{\gamma
}^{r}\circ \pi )\partial _{j}(g_{d}^{\gamma }\circ \pi )  \notag \\
& \ \ \ =U^{d}((g_{e}^{\alpha }u^{e}g_{c}^{\lambda }v^{c}((\rho _{\alpha
}^{i}\rho _{\lambda }^{j})\circ h))\circ \pi )\partial _{i}(\tilde{g}%
_{\gamma }^{r}\circ \pi )\partial _{j}(\tilde{g}_{\alpha }^{a}\circ \pi
)(g_{a}^{\gamma }\circ \pi )(g_{d}^{\alpha }\circ \pi )  \notag \\
& \ \ \ =-U^{d}((g_{e}^{\alpha }u^{e}g_{c}^{\lambda }v^{c}((\rho _{\alpha
}^{i}\rho _{\lambda }^{j})\circ h))\circ \pi )\partial _{i}(\tilde{g}%
_{\gamma }^{r}\circ \pi )\tilde{g}_{\alpha }^{a}\partial _{j}(g_{a}^{\gamma
}\circ \pi )(g_{d}^{\alpha }\circ \pi )  \notag \\
& \ \ \ =U^{d}((g_{e}^{\alpha }u^{e}g_{c}^{\lambda }v^{c}((\rho _{\alpha
}^{i}\rho _{\lambda }^{j})\circ h))\circ \pi )(\tilde{g}_{\gamma }^{r}\circ
\pi )\partial _{i}(\tilde{g}_{\alpha }^{a}\circ \pi )\partial
_{j}(g_{a}^{\gamma }\circ \pi )(g_{d}^{\alpha }\circ \pi )  \notag \\
& \ \ \ =U^{d}((g_{e}^{\alpha }u^{e}g_{c}^{\lambda }v^{c}((\rho _{\alpha
}^{i}\rho _{\lambda }^{j})\circ h))\circ \pi )(\tilde{g}_{\gamma }^{r}\circ
\pi )(\tilde{g}_{\alpha }^{a}\circ \pi )\partial _{j}(g_{a}^{\gamma }\circ
\pi )\partial _{i}(g_{d}^{\alpha }\circ \pi ).
\end{align}%
Similarly, we have
\begin{align}\label{lili}
& U^{d}((g_{e}^{\alpha }u^{e}g_{d}^{\lambda }((\rho _{\alpha }^{i}\rho
_{\lambda }^{j})\circ h))\circ \pi )\partial _{i}(\tilde{g}_{\gamma
}^{r}\circ \pi )\partial _{j}((g_{c}^{\gamma }v^{c})\circ \pi )  \notag
\nonumber\\
& \ \ \ =-U^{d}(\tilde{g}_{\gamma }^{r}\circ \pi )((\tilde{g}_{\beta
}^{b}g_{d}^{\lambda }g_{e}^{\sigma }u^{e}((\rho _{\sigma }^{i}\rho
_{\lambda }^{j})\circ h))\circ \pi )\partial _{j}((g_{c}^{\beta
}v^{c})\circ \pi )\partial _{i}(g_{b}^{\gamma }\circ \pi ),
\end{align}
\begin{align}\label{lili1}
& U^{d}((g_{e}^{\alpha }u^{e}g_{c}^{\lambda }v^{c}g_{d}^{\mu }\partial _{i}(%
\tilde{g}_{\gamma }^{r}\circ \pi )((L_{\lambda \mu }^{\gamma }\rho _{\alpha
}^{i})\circ h))\circ \pi )  \notag \nonumber\\
& \ \ \ =-U^{d}(\tilde{g}_{\gamma }^{r}\circ \pi )((\tilde{g}_{\beta
}^{b}g_{c}^{\lambda }v^{c}g_{e}^{\sigma }u^{e}((L_{\lambda \mu
}^{\beta }\rho _{\sigma }^{i})\circ h))\circ \pi )\partial
_{i}(g_{b}^{\gamma }\circ \pi ),
\end{align}
\begin{align}\label{lili2}
& U^{d}(\tilde{g}_{\gamma
}^{r}\circ \pi )((\tilde{g}_{\beta }^{b}g_{c}^{\lambda
}v^{c}g_{e}^{\sigma }u^{e}((\rho _{\sigma }^{i}\rho _{\lambda
}^{j})\circ h))\circ \pi )\partial _{j}(g_{d}^{\beta }\circ \pi
)\partial _{i}(g_{b}^{\gamma }\circ \pi )  \notag \nonumber\\
& \ \ \ =-U^{d}((g_{e}^{\beta }v^{e}g_{c}^{\lambda }u^{c}((\rho _{\beta
}^{i}\rho _{\lambda }^{j})\circ h))\circ \pi )\partial _{i}(\tilde{g}%
_{\gamma }^{r}\circ \pi )\partial _{j}(g_{d}^{\gamma }\circ \pi
),
\end{align}
\begin{align}\label{lili3}
& U^{d}((g_{e}^{\beta }v^{e}g_{d}^{\lambda }((\rho _{\beta }^{i}\rho
_{\lambda }^{j})\circ h))\circ \pi )\partial _{i}(\tilde{g}_{\gamma
}^{r}\circ \pi )\partial _{j}((g_{c}^{\gamma }u^{c})\circ \pi )
\notag \nonumber\\
& \ \ \ =-U^{d}(\tilde{g}_{\gamma }^{r}\circ \pi )((\tilde{g}_{\alpha
}^{a}g_{d}^{\lambda }g_{e}^{\sigma }v^{e}((\rho _{\sigma }^{i}\rho _{\lambda
}^{j})\circ h))\circ \pi )\partial _{j}((g_{c}^{\alpha }u^{c})\circ \pi
)\partial _{i}(g_{a}^{\gamma }\circ \pi ),
\end{align}%
and
\begin{align}\label{lili4}
&U^{d}((g_{e}^{\beta }v^{e}g_{c}^{\lambda }u^{c}g_{d}^{\mu }\partial _{i}(%
\tilde{g}_{\gamma }^{r}\circ \pi )((L_{\lambda \mu }^{\gamma }\rho _{\beta
}^{i})\circ h))\circ \pi )  \notag  \nonumber \\
& \ \ \ =-U^{d}(\tilde{g}_{\gamma }^{r}\circ \pi )((\tilde{g}_{\alpha
}^{a}g_{c}^{\lambda }u^{c}g_{e}^{\sigma }v^{e}((L_{\lambda \mu }^{\alpha
}\rho _{\sigma }^{i})\circ h))\circ \pi )\partial _{i}(g_{a}^{\gamma }\circ
\pi ).
\end{align}%
Setting (\ref{lili})-(\ref{lili4}) in (\ref{lili0}) we deduce that $C^{r}=0$%
. This equation together with (\ref{lili00}) and (\ref{lili000}) give us (iii).
\end{proof}
\section{The generalized tangent bundle of a dual vector bundle}
We consider the following diagrams:
\begin{equation*}
\begin{array}{ccccccc}
\overset{\ast }{E} & ^{\underrightarrow{~\ \ \overset{\ast }{g}~\ \ }} & ( F,[ ,%
] _{F,h}) & ^{\underrightarrow{~\ \ \rho ~\ \ }} & TM & ^{%
\underrightarrow{~\ \ Th~\ \ }} & ( TN,[ ,] _{TN}) \\
~\downarrow \overset{\ast }{\pi } &  & \downarrow \nu &  & ~\ \ \downarrow
\tau _{M} &  & ~\ \ \downarrow \tau _{N} \ \ \ \ ,\\
M & ^{\underrightarrow{~\ \ h~\ \ }} & N & ^{\underrightarrow{~\ \
\eta ~\ \
}} & M & ^{\underrightarrow{~\ \ h~\ \ }} & N%
\end{array}%
\end{equation*}%
where $(( F,\nu ,N) ,[ ,] _{F,h},( \rho ,\eta ) )$ is a generalized
lie algebroid, $( E,\pi ,M) $ is a vector bundle and $(\overset{\ast
}{g},h) $ is a vector bundle morphism
from $( \overset{\ast }{E},\overset{\ast }{\pi },M) $ to $%
( F,\nu ,N) $ with components $g^{\alpha b},~\alpha \in
\{ 1,2,..,n\} $ and $b\in\{ 1,2,..,r\} $.

Setting $( x^{i},p_{a}) $ as the canonical local coordinates on $%
( \overset{\ast }{E},\overset{\ast }{\pi },M) ,$ where $i\in
1,\cdots ,m$, $a\in 1,\cdots ,r$, and
\begin{equation*}
( x^{i},p_{a}) \longrightarrow ( x^{i^{\prime }}( x^{i})
,p_{a^{\prime }}( x^{i},p_{a}) ) ,
\end{equation*}%
is a change of coordinates on $( \overset{\ast }{E},\overset{\ast }{\pi
},M) $, then the coordinates $p_{a}$ change to $p_{a^{\prime }}$
according to the rule
\begin{equation*}
\begin{array}{c}
p_{a^{\prime }}=\displaystyle M_{a^{\prime }}^{a}p_{a}.%
\end{array}%
\end{equation*}

If $v=v^{\alpha }t_{\alpha }$ is a section of $(F,\nu ,N)$, then we define
its corresponding section $X=X^{\alpha }\overset{\ast }{T}_{\alpha }$ in the
pull-back vector bundle $(( h\circ \overset{\ast }{\pi })
^{\ast }F,( h\circ \overset{\ast }{\pi }) ^{\ast }\nu ,\overset{%
\ast }{E}) $ given by
\begin{align*}
X(\overset{\ast }{v}_{x}) &=(( g^{\alpha b}\circ h^{-1})( u_{b}\circ
h^{-1}) t_{\alpha })( h\circ \overset{\ast }{\pi }( \overset{\ast
}{v}_{x}))g^{\alpha b}(x) u_{b}( x) t_{\alpha }( h( x)) ,\ \ \ \ \
\forall\overset{\ast }{v}_{x}\in \overset{\ast }{E}.
\end{align*}

Let $(\overset{\ast }{\partial }_{i},\dot{\partial}^{a})$ be the base
sections for the Lie $\mathcal{F}(\overset{\ast }{E})$-algebra
\begin{equation*}
(\Gamma (T\overset{\ast }{E},\tau _{\overset{\ast }{E}},\overset{\ast }{E}%
),+,\cdot ,[,]_{T\overset{\ast }{E}}).
\end{equation*}
If we define
\begin{equation*}
\begin{array}{rcl}
\ ( h\circ \overset{\ast }{\pi }) ^{\ast }F & ^{\underrightarrow{%
\overset{( h\circ \overset{\ast }{\pi }) ^{\ast }F}{\rho }}} & T%
\overset{\ast }{E}, \\
\displaystyle X^{\alpha }\overset{\ast }{T}_{\alpha }(\overset{\ast }{u}_{x})
& \longmapsto & \displaystyle( g^{\alpha b}\circ \overset{\ast }{\pi }%
) (u_{b}\circ \overset{\ast }{\pi })( \rho _{\alpha }^{i}\circ
h\circ \overset{\ast }{\pi }) \overset{\ast }{\partial }_{i}(
\overset{\ast }{u}_{x}) ,%
\end{array}%
\end{equation*}%
then $({\overset{( h\circ \overset{\ast }{\pi }) ^{\ast }F}{\rho
}},Id_{\overset{\ast }{E}})$ is a vector bundles morphism from $(
( h\circ \overset{\ast }{\pi }) ^{\ast }F,( h\circ \overset{%
\ast }{\pi }) ^{\ast }\nu ,\overset{\ast }{E}) $ to $( T%
\overset{\ast }{E},\tau _{\overset{\ast }{E}},\overset{\ast
}{E}) $. Moreover, the operation
\begin{equation*}
\begin{array}{ccc}
\Gamma(( h\circ \overset{\ast }{\pi }) ^{\ast }F,(
h\circ \overset{\ast }{\pi }) ^{\ast }\nu ,\overset{\ast }{E})
^{2} & ^{\underrightarrow{~\ \ [ ,] _{( h\circ \overset{\ast
}{\pi }) ^{\ast }F}~\ \ }} & \Gamma(( h\circ \overset{\ast
}{\pi }) ^{\ast }F,( h\circ \overset{\ast }{\pi }) ^{\ast
}\nu ,\overset{\ast }{E}) ,%
\end{array}%
\end{equation*}%
defined by
\begin{align*}
[ \overset{\ast }{T}_{\alpha },\overset{\ast }{T}_{\beta }] _{(
h\circ \overset{\ast }{\pi }) ^{\ast }F} &\!\!=(L_{\alpha \beta
}^{\gamma }\circ h\circ \overset{\ast }{\pi })\overset{\ast
}{T}_{\gamma },\\
[ \overset{\ast }{T}_{\alpha },f\overset{\ast }{T}_{\beta }]
_{( h\circ \overset{\ast }{\pi }) ^{\ast }F} & \displaystyle%
=\!\!f( L_{\alpha \beta }^{\gamma }\circ h\circ \overset{\ast }{\pi }%
) \overset{\ast }{T}_{\gamma }+( \rho _{\alpha }^{i}\circ h\circ
\overset{\ast }{\pi }) \overset{\ast }{\partial }_{i}( f)
\overset{\ast }{T}_{\beta },\\
[ f\overset{\ast }{T}_{\alpha },\overset{\ast }{T}_{\beta }]
_{( h\circ \overset{\ast }{\pi }) ^{\ast }F} & \!\!=-[ \overset{%
\ast }{T}_{\beta },f\overset{\ast }{T}_{\alpha }] _{( h\circ
\overset{\ast }{\pi }) ^{\ast }F},
\end{align*}
for any $f\in \mathcal{F}( \overset{\ast }{E}) $, is a Lie
bracket on $\Gamma(( h\circ \overset{\ast }{\pi }) ^{\ast }F,(
h\circ \overset{\ast }{\pi }) ^{\ast }\nu ,\overset{\ast }{E})$.
It is easy to check that
\begin{equation*}
((( h\circ \overset{\ast }{\pi }) ^{\ast }F,( h\circ \overset{\ast
}{\pi }) ^{\ast }\nu ,\overset{\ast }{E}),[ ,] _{( h\circ
\overset{\ast }{\pi }) ^{\ast }F},(\overset{(
h\circ \overset{\ast }{\pi }) ^{\ast }F}{\rho },Id_{\overset{\ast }{E}%
})),
\end{equation*}%
is a Lie algebroid which is called the \textit{pull-back Lie algebroid of
the generalized Lie algebroid} $(\left( F,\nu ,N\right) ,\left[ ,\right]
_{F,h},\left( \rho ,\eta \right) )$.

Now, we consider the vector subbundle $(\left( \rho ,\eta \right) T\overset{%
\ast }{E},\left( \rho ,\eta \right) \tau _{\overset{\ast }{E}},\overset{\ast
}{E})$ of the vector bundle $(\left( h\circ \pi \right) ^{\ast }F\oplus T%
\overset{\ast }{E},\overset{\oplus }{\overset{\ast }{\pi }},\overset{\ast }{E%
}),$ for which the $\mathcal{F}(\overset{\ast }{E})$-module of sections is
the $\mathcal{F}(\overset{\ast }{E})$-submodule of $(\Gamma (\left( h\circ
\pi \right) ^{\ast }F\oplus T\overset{\ast }{E},\overset{\oplus }{\overset{%
\ast }{\pi }},\overset{\ast }{E}),+,\cdot ),$ generated by the set
of sections $(\overset{\ast }{\tilde{\partial}}_{\alpha
},{\dot{\tilde{\partial}}}^{a}),$ where
\[
\overset{\ast }{\tilde{%
\partial}}_{\alpha}=\overset{\ast }{T}_{\alpha}\oplus(\rho^i_\alpha\circ h\circ\overset{\ast }{\pi})\overset{\ast
}{\partial}_{i},\ \ \
{\dot{\tilde{\partial}}}^a=0_{(h\circ\overset{\ast }{\pi})^\ast
F}\oplus{\dot{\partial}}^a.
\]
The base sections $(\overset{\ast }{\tilde{\partial}}_{\alpha },{\dot{%
\tilde{\partial}}}^{a})$ are called the \emph{natural }$(\rho ,\eta )$\emph{%
-base}.
Now consider the vector bundles morphism $( \overset{\ast }{\tilde{\rho}%
},Id_{\overset{\ast }{E}}) $ from $((\rho ,\eta )T\overset{\ast }{E}%
,(\rho ,\eta )\tau _{\overset{\ast }{E}}$ $,\overset{\ast }{E})$ to $(T%
\overset{\ast }{E},\tau _{\overset{\ast }{E}},\overset{\ast }{E})$ , where
\begin{equation*}
\begin{array}{rcl}
\left( \rho ,\eta \right) T\overset{\ast }{E} & \!\!^{\underrightarrow{\ \
\overset{\ast }{\tilde{\rho}}\ \ }}\!\!\! & \!\!T\overset{\ast }{E}\vspace*{%
2mm}, \\
(X^{\alpha }\overset{\ast }{\tilde{\partial}}_{\alpha }+\tilde{X}_{a}\dot{%
\tilde{\partial}}^{a})\!(\overset{\ast }{u}_{x})\!\!\!\! & \!\!\longmapsto
\!\!\! & \!\!(\!X^{\alpha }(\rho _{\alpha }^{i}{\circ h\circ }\overset{\ast }%
{\pi })\overset{\ast }{\partial }_{i}{+}\tilde{X}_{a}\dot{\partial}^{a})\!(%
\overset{\ast }{u}_{x}).%
\end{array}%
\end{equation*}
Moreover, we define the Lie bracket $[,]_{(\rho ,\eta )T\overset{\ast }{E}}$
as follows%
\begin{align*}
&[X_{1}^{\alpha }\overset{\ast }{\tilde{\partial}}_{\alpha }+\tilde{X}%
_{a}^{1}\dot{\tilde{\partial}}^{a},X_{2}^{\beta }\overset{\ast }{\tilde{%
\partial}}_{\beta }+\tilde{X}_{b}^{2}\dot{\tilde{\partial}}^{b}]_{\left(
\rho ,\eta \right) T\overset{\ast }{E}}=[X_{1}^{\alpha }\overset{\ast }{T}_{\alpha }+X_{2}^{\beta }\overset{%
\ast }{T}_{\beta }]_{\left( h\circ \overset{\ast }{\pi }\right)
^{\ast
}F}\nonumber\\
&\oplus [X_{1}^{\alpha }(\rho _{\alpha }^{i}\circ h\circ \overset{\ast }{%
\pi })\overset{\ast }{\partial }_{i}+\tilde{X}_{a}^{1}\dot{\partial}^{a},%
\Big.\hfill \displaystyle\Big.X_{2}^{\beta }(\rho _{\beta }^{j}\circ h\circ
\overset{\ast }{\pi })\overset{\ast }{\partial }_{j}+\tilde{X}_{b}^{2}\dot{%
\partial}^{b}]_{T\overset{\ast }{E}}.%
\end{align*}
Easily we obtain that $([ ,] _{( \rho ,\eta) T\overset%
{\ast }{E}},( \overset{\ast }{\tilde{\rho}},Id_{\overset{\ast }{E}%
}) )$ is a Lie algebroid structure for the vector bundle $(
( \rho ,\eta) T\overset{\ast }{E},( \rho ,\eta) \tau
_{\overset{\ast }{E}},\overset{\ast }{E}) $ which is called the
generalized tangent bundle.
\section{Vertical and complete $(\stackrel{\ast}{g}, h)$-lifts of sections of a dual vector bundle}
In this section, we consider the following diagrams
\begin{equation*}
\begin{array}{ccccccc}
\overset{\ast }{E} & ^{\underrightarrow{~\ \ \overset{\ast }{g}~\ \ }} &( F,[ ,%
] _{F,h}) & ^{\underrightarrow{~\ \ \rho ~\ \ }} & TM & ^{%
\underrightarrow{~\ \ Th~\ \ }} & ( TN,[ ,] _{TN}) \\
~\downarrow \overset{\ast }{\pi } &  & ~\ \ \ \downarrow \nu &  & ~\ \ \
\downarrow \tau _{M} &  & ~\ \ \ \downarrow \tau _{N} \ \ \ \ ,\\
M & ^{\underrightarrow{~\ \ h~\ \ }} & N & ^{\underrightarrow{~\ \ \eta ~\ \
}} & M & ^{\underrightarrow{~\ \ h~\ \ }} & N%
\end{array}%
\end{equation*}%
where $(( F,\nu ,N) ,[ ,] _{F,h},( \rho ,\eta)) $ is a generalized
Lie algebroid. We admit that $( \overset{\ast }{g},h) $ is a vector
bundles morphism locally invertible from $( \overset{\ast
}{E},\overset{\ast }{\pi },M) $ to $( F,\nu ,N) $ with
components
\begin{equation*}
\begin{array}{c}
g^{\alpha b},\ \ \alpha \in \{1,\cdots,n\},\ \ b\in \{1,\cdots,r\}.%
\end{array}%
\end{equation*}
So, for any vector local $( m+r) $-chart $( V,t_{V}) $
of $( \overset{\ast }{E},\overset{\ast }{\pi },M) $, there exists
the real functions
\begin{equation*}
\begin{array}{ccc}
\tilde{g}_{b\alpha }: \ V & ^{\underrightarrow{~\ \ \ \tilde{g}_{b\alpha }~\ \ }}\!\!\!\! & \mathbb{R%
},\ \ \ \alpha \in \{1,\cdots,n\},\ \ b\in \{1,\cdots,r\},%
\end{array}%
\end{equation*}%
such that%
\begin{equation*}
\begin{array}{c}
\tilde{g}_{b\alpha }\left( \varkappa \right) \cdot g^{\alpha
a}\left( \varkappa \right) =\delta _{b}^{a},\ \ \ \ %
g^{\alpha a}\left( \varkappa \right) \tilde{g}_{a\beta }\left(
\varkappa
\right) =\delta _{\beta }^{\alpha },%
\end{array}%
\end{equation*}%
for any $\varkappa \in V$. So, we can discuss about vector bundles
morphism $({\overset{\ast }{g}}^{-1},h^{-1}) $ from $( F,\nu ,N) $
to $( \overset{\ast }{E},\overset{\ast }{\pi },M) $ with components
$\tilde{g}_{b\alpha }, \alpha \in \{1,\cdots,n\}, b\in
\{1,\cdots,r\}.$

\begin{definition}
If $f\in \mathcal{F}\left( N\right) $ (respectively $f\in
\mathcal{F}\left(
M\right) ),$ then the real function $f^{\vee }=f\circ h\circ \overset{\ast }{%
\pi }$ (respectively $f^{\vee }=f\circ \overset{\ast }{\pi })$ is
called the vertical lift of the function $f.$
\end{definition}
\begin{remark}
Since
\begin{equation*}
\begin{array}{c}
(\Gamma (Th\circ \rho ,h\circ \eta )\Gamma ( \overset{\ast }{g},h)
(u_{a}s^{a}))(f)=( g^{\alpha b}u_{b}) \circ h^{-1}\rho
_{\alpha }^{i}\frac{\partial (f\circ h)}{\partial x^{i}}\circ h^{-1},%
\end{array}%
\end{equation*}%
then using the above definition we obtain
\begin{equation*}
(\Gamma (Th\circ \rho, h\circ \eta)\Gamma(\overset{\ast}{g}, h)(u_{a}s^{a})(f)%
)^{v}=(( g^{\alpha b}u_{b}\rho _{\alpha }^{i}\circ h) \circ
\overset{\ast }{\pi })\overset{\ast }{\partial _{i}}(f\circ h\circ
\overset{\ast }{\pi }).
\end{equation*}
\end{remark}
\begin{definition}
If $\overset{\ast }{u}=u_{a}s^{a}$ is a section of $( \overset{\ast }{E}%
,\overset{\ast }{\pi },M) $, then we introduce the vertical lift of $%
\overset{\ast }{u}$ as a section of $( T\overset{\ast }{E},\tau _{\overset%
{\ast }{E}},\overset{\ast }{E}) $ given by,
\begin{equation*}
\overset{\ast }{u}^{\vee }=(u_{a}\circ \pi )\dot{\partial}^{a}.
\end{equation*}
\end{definition}
If $\{s^{a}\}$ be a basis of sections of $\Gamma ( \overset{\ast }{E},%
\overset{\ast }{\pi },M) $, then using the above equation we have
\begin{equation*}
( s^{a}) ^{\vee }=\dot{\partial}^{a}.
\end{equation*}
Using the locally expression of $\overset{\ast }{u}^{\vee }$ we can
deduce
\begin{lemma}
If $\overset{\ast }{u}$ and $\overset{\ast }{v}$ are sections of $(
\overset{\ast }{E},\overset{\ast }{\pi },M) $ and $f\in \mathcal{F}(M)$%
, then
\begin{equation*}
(\overset{\ast }{u}+\overset{\ast }{v})^{\vee }=\overset{\ast }{u}^{\vee }+\overset{\ast }{v}%
^{\vee },\ \ \ (f\overset{\ast }{u})^{\vee }=f^{\vee }\overset{\ast }{u}%
^{\vee },\ \ \ \overset{\ast }{u}^{\vee }\left( f^{\vee }\right) =0.
\end{equation*}
\end{lemma}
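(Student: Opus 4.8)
The plan is to prove the three identities directly from the local expression $\overset{\ast}{u}^{\vee}=(u_{a}\circ \overset{\ast}{\pi})\dot{\partial}^{a}$ of the vertical lift together with the definition $f^{\vee}=f\circ \overset{\ast}{\pi}$ for $f\in\mathcal{F}(M)$, exactly paralleling the proof of the corresponding Lemma for the vector bundle $(E,\pi ,M)$. Writing $\overset{\ast}{u}=u_{a}s^{a}$ and $\overset{\ast}{v}=v_{a}s^{a}$ in terms of the dual local basis, the first two identities will reduce to the $\mathcal{F}(M)$-module structure of $\Gamma(\overset{\ast}{E},\overset{\ast}{\pi},M)$, while the third will be the only place where the fibred (rather than purely algebraic) structure is used.

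For the first identity, since $\overset{\ast}{u}+\overset{\ast}{v}=(u_{a}+v_{a})s^{a}$, I would expand $(\overset{\ast}{u}+\overset{\ast}{v})^{\vee}=((u_{a}+v_{a})\circ \overset{\ast}{\pi})\dot{\partial}^{a}$ and distribute the composition over the sum, obtaining $(u_{a}\circ \overset{\ast}{\pi})\dot{\partial}^{a}+(v_{a}\circ \overset{\ast}{\pi})\dot{\partial}^{a}=\overset{\ast}{u}^{\vee}+\overset{\ast}{v}^{\vee}$. For the second, since $f\overset{\ast}{u}=(fu_{a})s^{a}$, the factorization $(fu_{a})\circ \overset{\ast}{\pi}=(f\circ \overset{\ast}{\pi})(u_{a}\circ \overset{\ast}{\pi})=f^{\vee}(u_{a}\circ \overset{\ast}{\pi})$ yields $(f\overset{\ast}{u})^{\vee}=f^{\vee}\overset{\ast}{u}^{\vee}$ at once.

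The only substantive point is the third identity, for which I would compute $\overset{\ast}{u}^{\vee}(f^{\vee})=(u_{a}\circ \overset{\ast}{\pi})\dot{\partial}^{a}(f\circ \overset{\ast}{\pi})$, so that everything reduces to showing $\dot{\partial}^{a}(f\circ \overset{\ast}{\pi})=0$. In the canonical chart $(x^{i},p_{a})$ of $(\overset{\ast}{E},\overset{\ast}{\pi},M)$ the basis field $\dot{\partial}^{a}$ is the fibre derivative with respect to $p_{a}$, whereas $f\circ \overset{\ast}{\pi}$ depends only on the base coordinates $x^{i}$; hence it is annihilated by every $\dot{\partial}^{a}$. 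This is the dual analogue of the relations $\dot{\partial}_{b}(U^{a})=\delta_{b}^{a}$ and $\partial_{i}(U^{a})=0$ recorded earlier for $(E,\pi,M)$, and it is the one step that invokes the product structure of the chart rather than mere module algebra. Since $f\in\mathcal{F}(M)$ is arbitrary, this completes the argument.
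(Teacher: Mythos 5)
Your proposal is correct and follows exactly the route the paper intends: the paper states only that the lemma follows ``using the locally expression of $\overset{\ast }{u}^{\vee }$'', and your computation is precisely that local verification, with the first two identities reducing to the $\mathcal{F}(M)$-module structure and the third to the fact that $\dot{\partial}^{a}=\partial /\partial p_{a}$ annihilates functions of the form $f\circ \overset{\ast }{\pi }$. No gaps; you have simply written out the details the paper leaves implicit.
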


\begin{definition}
For any $\overset{\ast }{u}\in \Gamma ( \overset{\ast }{E},\overset{%
\ast }{\pi },M) $, the $\mathcal{F}( M) $-multilinear application
\begin{equation*}
\begin{array}{rcl}
\Lambda ( \overset{\ast }{E},\overset{\ast }{\pi },M) & ^{%
\underrightarrow{~\ \ (\overset{\ast }{g}, h)
\mathcal{L}_{\overset{\ast }{u}}~\ \
}} & \Lambda ( \overset{\ast }{E},\overset{\ast }{\pi },M)%
\end{array}%
,
\end{equation*}%
defined by%
\begin{equation*}
\begin{array}{c}
(\overset{\ast }{g},h) \mathcal{L}_{\overset{\ast }{u}}(f) =(
\overset{\ast }{g},h) ^{\ast }\{ \Gamma ( Th\circ \rho ,h\circ \eta ) %
[ \Gamma (\overset{\ast }{g},h) \overset{\ast }{u}] ( \overset{\ast
}{g}^{-1},h^{-1}) ^{\ast }f\} ,~\forall f\in \mathcal{F}(
M),%
\end{array}%
\end{equation*}%
and
\begin{equation*}
\begin{array}{l}
(\overset{\ast }{g},h) \mathcal{L}_{\overset{\ast }{u}}\overset{\ast }{\omega }%
( \overset{\ast }{u}_{1},\cdots,\overset{\ast }{u}_{q})
=(\overset{\ast }{g},h) ^{\ast }\{ \mathcal{L}_{\Gamma ( g,h)
\overset{\ast }{u}}(\overset{\ast }{g}^{-1},h^{-1}) ^{\ast }\overset{\ast }{%
\omega }( \Gamma (\overset{\ast }{g},h) \overset{\ast
}{u}_{1},\cdots,\Gamma
(\overset{\ast }{g},h) \overset{\ast }{u}_{q}) \} \\
=(\overset{\ast }{g},h) ^{\ast }\{ \Gamma ( Th\circ \rho ,h\circ
\eta ) [ \Gamma (\overset{\ast }{g},h) \overset{\ast }{u}]
(\overset{\ast }{g}^{-1},h^{-1}) ^{\ast }\overset{\ast }{\omega }(
\Gamma (\overset{\ast }{g},h) \overset{\ast }{u}_{1},\cdots,\Gamma
(\overset{\ast }{g},h) \overset{\ast
}{u}_{q}) \} \\
\ \ \ -(\overset{\ast }{g},h) ^{\ast }\{ (\overset{\ast }{g}^{-1},h^{-1}) ^{\ast }%
\overset{\ast }{\omega }( \Gamma (\overset{\ast }{g},h) \overset{\ast }{u}%
_{1},\cdots,[ \Gamma (\overset{\ast }{g},h) \overset{\ast
}{u},\Gamma (\overset{\ast }{g},h) \overset{\ast }{u}_{i}]
_{F,h},\cdots,\Gamma (\overset{\ast }{g},h)
\overset{\ast }{u}_{q}) \} ,%
\end{array}%
\end{equation*}%
for any $\omega \in \Lambda ^{q}\mathbf{\ }( \overset{\ast }{E},\overset%
{\ast }{\pi },M) $ and $\overset{\ast }{u}_{1},...,\overset{\ast }{u}%
_{q}\in \Gamma ( \overset{\ast }{E},\overset{\ast }{\pi },M) ,$
will be called \emph{the covariant Lie }$( g,h) $-\emph{%
derivative with respect to the section }$\overset{\ast }{u}.$
\end{definition}

\begin{definition}
For any $a=1,\cdots ,r,$ we consider the real function $U_{a}$ on $\overset{%
\ast }{E}$ such that
\begin{equation*}
U_{a}|_{\overset{\ast }{\pi }^{-1}(V)}( \overset{\ast }{u}_{x})
=p_{a},
\end{equation*}%
where the real numbers $p_{1},\cdots ,p_{r}$ are the fibre components of the
point $\overset{\ast }{u}_{x}$ in the arbitrary vector local $(m+r)$-chart $({V%
},s_{{V}}).$
\end{definition}

\begin{remark}
Using the above definition we have $ \dot{\partial}^{b}\left(
U_{a}\right) =\delta _{a}^{b}$ and $\partial _{i}\left( U_{a}\right)
=0, $ where $\alpha \in \{1,\cdots,n\}, b\in \{1,\cdots,r\}.$
\end{remark}

\begin{definition}
If $\overset{\ast }{\omega }=\omega ^{a}s_{a}\in \Lambda ^{1}\mathbf{\ }%
( \overset{\ast }{E},\overset{\ast }{\pi },M) ,$ then we consider
the real function $\widehat{\overset{\ast }{\omega }}$ defined by
\begin{equation*}
\widehat{\overset{\ast }{\omega }}|_{\overset{\ast }{\pi }%
^{-1}(V)}=U_{a}( \omega ^{a}\circ \overset{\ast }{\pi }) |_{%
\overset{\ast }{\pi }^{-1}(V)},
\end{equation*}%
where $({V},s_{{V}})$ is an arbitrary vector local $(m+r)$-chart$.$
\end{definition}

\begin{theorem}
Let $\overset{\ast }{u}$ be a section of $( \overset{\ast }{E},\overset{%
\ast }{\pi },M) $. Then there exists a unique vector field $\overset{%
\ast }{u}^{c}\in \Gamma ( T\overset{\ast }{E},\tau _{\overset{\ast }{E}%
},\overset{\ast }{E}) $, the complete $(\overset{\ast}{g}, h)$-lift of $\overset{\ast }{u}$,
satisfying the following conditions:

\textbf{i}) $\overset{\ast }{u}^{c}$ is $(h\circ \overset{\ast }{\pi })$%
-related, i.e.,
\begin{equation*}
T(h\circ \overset{\ast }{\pi })(\overset{\ast }{u}_{\overset{\ast }{v}%
_{x}}^{c})=\{ \Gamma (Th\circ \rho ,h\circ \eta )(\Gamma
(\overset{\ast }{g},h) ( \overset{\ast }{u}) )\}(h\circ
\overset{\ast }{\pi }( \overset{\ast }{v}_{x})),
\end{equation*}

\textbf{ii}) $\overset{\ast }{u}^{c}(\widehat{\overset{\ast }{\omega }})=%
\widehat{(\overset{\ast }{g},h) \mathcal{L}_{\overset{\ast }{u}}\overset{\ast }{%
\omega }}$,\newline
for any $\overset{\ast }{\omega }\in \Lambda ^{1}\mathbf{\ }( \overset{%
\ast }{E},\overset{\ast }{\pi },M) $.
\end{theorem}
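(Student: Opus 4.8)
The plan is to transcribe the argument proving \emph{Theorem \ref{T1}} into the dual setting, with $(\overset{\ast}{E},\overset{\ast}{\pi},M)$, the frame $(\overset{\ast}{\partial}_i,\dot\partial^a)$ of $(T\overset{\ast}{E},\tau_{\overset{\ast}{E}},\overset{\ast}{E})$, the fibre functions $U_a$ and the function $\widehat{\overset{\ast}{\omega}}$ playing the roles previously played by $(E,\pi,M)$, $(\partial_i,\dot\partial_a)$, $U^a$ and $\hat\omega$. First I would assume that a vector field $\overset{\ast}{u}^{c}$ obeying (i) and (ii) exists and expand it in the natural frame as
\[
\overset{\ast}{u}^{c}=A^{i}\overset{\ast}{\partial}_{i}+B_{a}\dot{\partial}^{a},
\]
with $A^{i},B_{a}\in\mathcal{F}(\overset{\ast}{E})$. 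The goal is to determine $A^{i}$ from condition (i) and $B_{a}$ from condition (ii); since both are forced, this yields uniqueness, and reading the resulting formula backwards yields existence.

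To fix $A^{i}$, I would compute $T(h\circ\overset{\ast}{\pi})$ on each basis field exactly as in the proof of \emph{Theorem \ref{T1}}, using
\[
T(h\circ\overset{\ast}{\pi})(\overset{\ast}{\partial}_{i})(f)=\overset{\ast}{\partial}_{i}(f\circ h\circ\overset{\ast}{\pi}),\qquad T(h\circ\overset{\ast}{\pi})(\dot\partial^{a})(f)=0,
\]
so that only the $\overset{\ast}{\partial}_{i}$-part of $\overset{\ast}{u}^{c}$ contributes to the push-forward. Matching this against the explicit form of $\Gamma(Th\circ\rho,h\circ\eta)(\Gamma(\overset{\ast}{g},h)\overset{\ast}{u})$, condition (i) forces
\[
A^{i}=(g^{\alpha b}u_{b}\rho_{\alpha}^{i}\circ h)\circ\overset{\ast}{\pi}.
\]

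To fix $B_{a}$, I would introduce the dual structure coefficients $K^{\gamma a}(\overset{\ast}{u})\,t_{\gamma}:=[\Gamma(\overset{\ast}{g},h)\overset{\ast}{u},\Gamma(\overset{\ast}{g},h)s^{a}]_{F,h}$ and expand them by the Leibniz rule for $[\,,\,]_{F,h}$ together with (\ref{P1}), in parallel with the expansion of $K_{a}^{\gamma}(u)$. Then I would evaluate both sides of (ii): for the left side, $\overset{\ast}{u}^{c}(\widehat{\overset{\ast}{\omega}})$ is computed using $\dot\partial^{b}(U_{a})=\delta_{a}^{b}$ and $\partial_{i}(U_{a})=0$; for the right side, $\widehat{(\overset{\ast}{g},h)\mathcal{L}_{\overset{\ast}{u}}\overset{\ast}{\omega}}=U_{a}((\overset{\ast}{g},h)\mathcal{L}_{\overset{\ast}{u}}\overset{\ast}{\omega}(s^{a}))\circ\overset{\ast}{\pi}$ is expanded directly from the definition of the covariant Lie $(\overset{\ast}{g},h)$-derivative. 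After the $\overset{\ast}{\partial}_{i}$-contributions cancel, comparison leaves a single relation determining $B_{a}$ in terms of $U$, $K^{\gamma a}(\overset{\ast}{u})$ and the inverse components $\tilde g_{b\alpha}$. Since $\overset{\ast}{\omega}$ is arbitrary, specializing to $\overset{\ast}{\omega}=s_{b}$ (so $\omega^{b}=1$ and $\omega^{a}=0$ for $a\neq b$) isolates $B_{b}$ uniquely and produces the local expression for $\overset{\ast}{u}^{c}$; reversing the computation shows that this field does satisfy (i) and (ii), giving existence.

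The only real difficulty is the bookkeeping of raised and lowered indices, since duality interchanges the positions occupied by $\dot\partial^{a}$, $U_{a}$, $\omega^{a}s_{a}$ and $\tilde g_{b\alpha}$ relative to the proof of \emph{Theorem \ref{T1}}. The main risk of error lies in the index placements and signs when expanding $(\overset{\ast}{g},h)\mathcal{L}_{\overset{\ast}{u}}\overset{\ast}{\omega}(s^{a})$ and $K^{\gamma a}(\overset{\ast}{u})$; conceptually no new idea beyond \emph{Theorem \ref{T1}} is needed.
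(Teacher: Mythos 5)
Your proposal is correct and matches the paper's approach: the paper's own proof simply states that the argument is carried out exactly as in Theorem \ref{T1}, dualized with $U_{a}$, $\dot{\partial}^{a}$, $\tilde g_{b\gamma}$ and $K^{\gamma a}(\overset{\ast}{u})t_{\gamma}=[\Gamma(\overset{\ast}{g},h)\overset{\ast}{u},\Gamma(\overset{\ast}{g},h)s^{a}]_{F,h}$ in place of their undualized counterparts, and records the resulting local expression $\overset{\ast}{u}^{c}=((g^{\alpha e}u_{e}\rho_{\alpha}^{i}\circ h)\circ\overset{\ast}{\pi})\overset{\ast}{\partial}_{i}-U_{a}(K^{\gamma a}(\overset{\ast}{u})\circ h\circ\overset{\ast}{\pi})(\tilde g_{b\gamma}\circ\overset{\ast}{\pi})\dot{\partial}^{b}$, which is precisely what your determination of $A^{i}$ and $B_{a}$ yields.
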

\begin{proof}
Similar to the proof of Theorem \ref{T1}, we obtain the following
locally expression for $\overset{\ast }{u}^{c}$  that show the
existence and uniqueness of it.
\begin{equation*}
\begin{array}{cl}
\overset{\ast }{u}^{c} & =(( g^{\alpha e}u_{e}\rho _{\alpha
}^{i}\circ h) \circ \overset{\ast }{\pi })\overset{\ast }{%
\partial }_{i}-U_{a}(K^{\gamma a}( \overset{\ast }{u})
\circ h\circ \overset{\ast }{\pi })( \tilde{g}_{b\gamma }\circ
\overset{\ast }{\pi }){\dot{\partial}}^{b},
\end{array}%
\end{equation*}
where
\[
K^{\gamma a}( \overset{\ast }{u}) =( g^{\beta
c}u_{c}\rho _{\beta }^{j}\frac{\partial \left( g^{\gamma a}\right) }{%
\partial x^{i}}-g^{\alpha a}\rho _{\alpha }^{i}\frac{\partial \left(
g^{\gamma c}u_{c}\right) }{\partial x^{i}}+g^{\alpha c}u_{c}L_{\alpha \beta
}^{\gamma }g^{\beta a})\circ h^{-1}.
\]

\end{proof}
\begin{definition}
The complete $( \overset{\ast}{g},h) $-lift of a function $f\in\mathcal{F}(N)$ into $\mathcal{F}(\overset{\ast }{E})$ is the function
\begin{equation*}
f^{c}:\overset{\ast }{E}\longrightarrow \mathbb{R},
\end{equation*}%
defined by
\begin{equation*}
f^{c}|_{\overset{\ast }{\pi }^{-1}(V)}=U_{a}( g^{\alpha a}\circ \overset%
{\ast }{\pi }) (\rho _{\alpha }^{i}\circ h\circ \overset{\ast }{\pi })%
\overset{\ast }{\partial }_{i}(f\circ h\circ \overset{\ast }{\pi })|_{%
\overset{\ast }{\pi }^{-1}(V)},
\end{equation*}%
where $({V},s_{{V}})$ is an arbitrary vector local $(m+r)$-chart.
\end{definition}
Similar to the Lemmas \ref{12} and \ref{13}, we have
\begin{lemma}
If $\overset{\ast }{u}$ is a section of $( \overset{\ast }{E},\overset{%
\ast }{\pi },M) $ and $f, f_1, f_2\in \mathcal{F}(N)$, then%
\begin{equation*}
\begin{array}{l}
(i)\ (f_1+f_2)^{c}=f_1^{c}+f_2^{c}, \\
(ii)\ (f_1f_2)^{c}=f_1^{c}f_2^{\vee }+f_1^{\vee }f_2^{c}, \\
(iii)\ \overset{\ast }{u}^{\vee }( f^{c}) =\{ \Gamma
(Th\circ \rho ,h\circ \eta )(\Gamma (\overset{\ast }{g},h) \overset{\ast }{u%
})( f) \} ^{\vee },\\
(iv)\ \overset{\ast }{u}^{c}( f^{c}) =\{ (\Gamma (Th\circ \rho
,h\circ \eta )(\Gamma (\overset{\ast }{g},h) \overset{\ast }{u})
(f)\} ^{c}.
\end{array}%
\end{equation*}
\end{lemma}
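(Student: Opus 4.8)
The plan is to mirror the computations of Lemmas \ref{12} and \ref{13}, replacing each ingredient of $(E,\pi,M)$ by its dual counterpart: the fibre functions $U^{a}$, the vertical fields $\dot{\partial}_{a}$ and the matrices $g_{b}^{\alpha},\tilde{g}_{\gamma}^{b}$ become $U_{a}$, $\dot{\partial}^{a}$ and $g^{\alpha a},\tilde{g}_{b\gamma}$, while the relations $\dot{\partial}^{b}(U_{a})=\delta_{a}^{b}$ and $\overset{\ast}{\partial}_{i}(U_{a})=0$ take over the role played by $\dot{\partial}_{b}(U^{a})=\delta_{b}^{a}$, $\partial_{i}(U^{a})=0$. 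I would dispatch (i)--(iii) quickly and then treat (iv), which is the only substantial computation.

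For (i) and (ii) I would argue directly from the local expression $f^{c}=U_{a}(g^{\alpha a}\circ\overset{\ast}{\pi})(\rho_{\alpha}^{i}\circ h\circ\overset{\ast}{\pi})\overset{\ast}{\partial}_{i}(f\circ h\circ\overset{\ast}{\pi})$. Since $\overset{\ast}{\partial}_{i}$ is a derivation and $f\mapsto f\circ h\circ\overset{\ast}{\pi}$ is additive, the map $f\mapsto f^{c}$ is additive, giving (i). For (ii) the Leibniz rule gives $\overset{\ast}{\partial}_{i}((f_{1}f_{2})\circ h\circ\overset{\ast}{\pi})=(f_{2}\circ h\circ\overset{\ast}{\pi})\overset{\ast}{\partial}_{i}(f_{1}\circ h\circ\overset{\ast}{\pi})+(f_{1}\circ h\circ\overset{\ast}{\pi})\overset{\ast}{\partial}_{i}(f_{2}\circ h\circ\overset{\ast}{\pi})$; multiplying by the coefficient $U_{a}(g^{\alpha a}\circ\overset{\ast}{\pi})(\rho_{\alpha}^{i}\circ h\circ\overset{\ast}{\pi})$ and recognising $f_{j}^{\vee}=f_{j}\circ h\circ\overset{\ast}{\pi}$ reproduces $f_{1}^{c}f_{2}^{\vee}+f_{1}^{\vee}f_{2}^{c}$. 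For (iii) I would apply $\overset{\ast}{u}^{\vee}=(u_{b}\circ\overset{\ast}{\pi})\dot{\partial}^{b}$ to $f^{c}$; since the coefficients in $f^{c}$ depend only on the base while $\dot{\partial}^{b}(U_{a})=\delta_{a}^{b}$, only the factor $U_{a}$ is touched, leaving $\overset{\ast}{u}^{\vee}(f^{c})=(u_{a}\circ\overset{\ast}{\pi})(g^{\alpha a}\circ\overset{\ast}{\pi})(\rho_{\alpha}^{i}\circ h\circ\overset{\ast}{\pi})\overset{\ast}{\partial}_{i}(f\circ h\circ\overset{\ast}{\pi})$. On the other side, $\Gamma(\overset{\ast}{g},h)\overset{\ast}{u}=(u_{a}\circ h^{-1})(g^{\alpha a}\circ h^{-1})t_{\alpha}$, so by the formula of the Remark the quantity $\Gamma(Th\circ\rho,h\circ\eta)(\Gamma(\overset{\ast}{g},h)\overset{\ast}{u})(f)$ equals $((u_{a}g^{\alpha a})\circ h^{-1})\rho_{\alpha}^{i}(\partial_{i}(f\circ h)\circ h^{-1})$; applying $(\,\cdot\,)^{\vee}=(\,\cdot\,)\circ h\circ\overset{\ast}{\pi}$ and the chain-rule identity $\partial_{i}(f\circ h)\circ\overset{\ast}{\pi}=\overset{\ast}{\partial}_{i}(f\circ h\circ\overset{\ast}{\pi})$ makes the two sides coincide, exactly as in Lemma \ref{12}(iii).

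The main work is (iv). I would substitute the local expression $\overset{\ast}{u}^{c}=((g^{\alpha e}u_{e}\rho_{\alpha}^{i}\circ h)\circ\overset{\ast}{\pi})\overset{\ast}{\partial}_{i}-U_{a}(K^{\gamma a}(\overset{\ast}{u})\circ h\circ\overset{\ast}{\pi})(\tilde{g}_{b\gamma}\circ\overset{\ast}{\pi})\dot{\partial}^{b}$ obtained in the preceding theorem and apply it to $f^{c}$. Because $\overset{\ast}{\partial}_{i}(U_{b})=0$, the horizontal part produces, through the Leibniz and chain rules, three terms: one differentiating $g^{\beta b}$, one differentiating $\rho_{\beta}^{j}$, and one carrying the second derivative $\overset{\ast}{\partial}_{i}\overset{\ast}{\partial}_{j}(f\circ h\circ\overset{\ast}{\pi})$, precisely as in Lemma \ref{13}. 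The vertical part, using $\dot{\partial}^{b}(U_{c})=\delta_{c}^{b}$ and $\tilde{g}_{b\gamma}g^{\beta b}=\delta_{\gamma}^{\beta}$, contributes the single first-derivative term $-U_{a}(K^{\beta a}(\overset{\ast}{u})\circ h\circ\overset{\ast}{\pi})(\rho_{\beta}^{j}\circ h\circ\overset{\ast}{\pi})\overset{\ast}{\partial}_{j}(f\circ h\circ\overset{\ast}{\pi})$.

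I would then expand $\{\Gamma(Th\circ\rho,h\circ\eta)(\Gamma(\overset{\ast}{g},h)\overset{\ast}{u})(f)\}^{c}$ directly from the definition of $f^{c}$ applied to the function $((u_{e}g^{\alpha e})\circ h^{-1})\rho_{\alpha}^{i}(\partial_{i}(f\circ h)\circ h^{-1})$; this yields three terms (two with a single derivative of $f\circ h\circ\overset{\ast}{\pi}$, one with a second derivative). The second-derivative terms on the two sides agree immediately using $\overset{\ast}{\partial}_{i}\overset{\ast}{\partial}_{j}=\overset{\ast}{\partial}_{j}\overset{\ast}{\partial}_{i}$. The matching of the first-derivative terms is the expected obstacle: I would expand $K^{\beta a}(\overset{\ast}{u})$ into its three constituents (a derivative of $g$, a derivative of $g^{\gamma c}u_{c}$, and the $L_{\alpha\beta}^{\gamma}$ piece) and compare against the two first-derivative terms from the complete-lift side together with the horizontal terms $H_{1},H_{2}$. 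As in Lemma \ref{13}, the terms involving derivatives of the $\rho$'s do not cancel termwise; they reorganise into the combination $(\rho_{\alpha}^{i}\circ h)\partial_{i}(\rho_{\beta}^{k}\circ h)-(\rho_{\beta}^{j}\circ h)\partial_{j}(\rho_{\alpha}^{k}\circ h)$, which by the structure-function identity (\ref{P1}) equals $(L_{\alpha\beta}^{\gamma}\circ h)(\rho_{\gamma}^{k}\circ h)$ and thereby absorbs the $L_{\alpha\beta}^{\gamma}$ contribution. Invoking (\ref{P1}) at this point completes the match and hence the proof.
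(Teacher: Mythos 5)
Your proof is correct and coincides with the paper's intended argument: the paper proves this lemma only by the remark that it is ``similar to Lemmas \ref{12} and \ref{13}'', and your write-up is exactly that dualization carried out in detail (replacing $U^{a},\dot{\partial}_{a},g_{b}^{\alpha},\tilde{g}_{\gamma}^{b}$ by $U_{a},\dot{\partial}^{a},g^{\alpha a},\tilde{g}_{b\gamma}$), down to the final invocation of the structure-function identity (\ref{P1}) to match the first-derivative terms in (iv). Nothing further is needed.
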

\begin{definition}
The complete $(\overset{\ast }{g},h) $-lift $\overset{\ast }{u}^{C}$
of a section $\overset{\ast }{u}\in \Gamma (\overset{\ast
}{E},\overset{\ast }{\pi },M)$ is the section of $( (\rho ,\eta
)T\overset{\ast }{E},(\rho
,\eta )\tau _{\overset{\ast }{E}},\overset{\ast }{E}) $ given by%
\begin{align}\label{L5}
\overset{\ast }{u}^{C}&=(( g^{\alpha e}\circ \overset{\ast }{\pi })
(u_{e}\circ \overset{\ast }{\pi }))\tilde{\partial}_{\alpha }-U_{a}(
K^{\gamma a}( \overset{\ast }{u}) \circ h\circ \overset{\ast }{%
\pi })( \tilde{g}_{b\gamma }\circ \overset{\ast }{\pi })
\dot{\tilde{\partial}}^{b}.%
\end{align}
\end{definition}
It is easy to check that $\Gamma( \overset{\ast }{\tilde{\rho}},Id_{\overset{\ast }{E}}) (%
\overset{\ast }{u}^{C})=\overset{\ast }{u}^{c}$.
\begin{definition}
If $\overset{\ast }{u}=u_{a}s^{a}$ be a section of $( \overset{\ast }{E}%
,\overset{\ast }{\pi },M) $, then we introduce the vertical lift of $%
\overset{\ast }{u}$ as section of $( \left( \rho ,\eta \right) T\overset%
{\ast }{E},\left( \rho ,\eta \right) \tau _{\overset{\ast }{E}},\overset{%
\ast }{E}) $ given by
\begin{equation*}
\overset{\ast }{u}^{V}=0_{(h\circ \overset{\ast }{\pi })^{\ast }E}\oplus
\overset{\ast }{u}^{\vee }.
\end{equation*}
\end{definition}
If $u=u^{a}e_{a}\in \Gamma (E,\pi ,M)$, then in the locally
expressions we get
\begin{align}\label{L6}
\overset{\ast }{u}^{V}=(u_{a}\circ \overset{\ast }{\pi
})\dot{\tilde{\partial}}^{a},
\end{align}
which gives us
$
\left( s^{a}\right) ^{V}=\dot{\tilde{\partial}}^{a}.
$
\begin{remark}
Using the almost tangent $(\overset{\ast }{g},h) $-structure $\overset{\ast }{\mathcal{J}}_{(\overset{\ast }{g},h) }$ given by\textrm{\ }%
\begin{equation*}
\begin{array}{rcl}
\Gamma (( \rho ,\eta) T\overset{\ast }{E},( \rho
,\eta) \tau _{\overset{\ast }{E}},\overset{\ast }{E}) & ^{%
\underrightarrow{\overset{\ast }{\mathcal{J}}_{(\overset{\ast
}{g},h) }}} & \Gamma ( ( \rho ,\eta) T\overset{\ast }{E},( \rho
,\eta ) \tau _{\overset{\ast }{E}},\overset{\ast }{E}) \\
Z^{\alpha }\tilde{\partial}_{\alpha }+Y^{b}{\dot{\tilde{\partial}}%
_{b}}& \longmapsto &( \tilde{g}_{b\alpha }\circ \overset{\ast }{\pi }%
) Z^{\alpha }{\dot{\tilde{\partial}}}^{b}%
\end{array}%
\end{equation*}%
it results that $\overset{\ast }{\mathcal{J}}_{(\overset{\ast
}{g},h) }( \overset{\ast }{u}^{C}) =\overset{\ast }{u}^{V}.$
\end{remark}
Similar to Theorem \ref{KM} we can deduce the following
\begin{theorem}
The Lie brackets of generalized vertical lifts and generalized complete $%
( \overset{\ast}{g}, h) $-lifts satisfy the following equalities:
\begin{align*}
i)\ [\overset{\ast }{u}^{V},\overset{\ast }{v}^{V}]_{(\rho ,\eta )T\overset{%
\ast }{E}} &=0,\ \  \\
ii)\ [\overset{\ast }{u}^{V},\overset{\ast }{v}^{C}]_{(\rho ,\eta )T\overset{%
\ast }{E}} &=\{ \Gamma (\overset{\ast }{g}^{-1},h^{-1}) [\Gamma (
\overset{\ast }{g},h) \overset{\ast }{u},\Gamma (\overset{\ast }{g},h) \overset{\ast }{v}%
]_{F,h}\} ^{V}, \\
iii)\ [\overset{\ast }{u}^{C},\overset{\ast }{v}^{C}]_{(\rho ,\eta )T\overset%
{\ast }{E}} &=\{ \Gamma (\overset{\ast }{g}^{-1},h^{-1}) [\Gamma (
\overset{\ast }{g},h) \overset{\ast }{u},\Gamma (\overset{\ast }{g},h) \overset{\ast }{v}%
]_{F,h}\} ^{C}.
\end{align*}
\end{theorem}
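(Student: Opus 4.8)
The plan is to transcribe the proof of Theorem \ref{KM} into the dual setting, replacing throughout the primal data $(g,h)$, $\pi$, $U^a$, $\dot{\tilde{\partial}}_a$, $g_a^\alpha$, $\tilde{g}_\alpha^a$ by their dual counterparts $(\overset{\ast}{g},h)$, $\overset{\ast}{\pi}$, $U_a$, $\dot{\tilde{\partial}}^a$, $g^{\alpha a}$, $\tilde{g}_{a\alpha}$, and using the local expressions (\ref{L5}) and (\ref{L6}) for $\overset{\ast}{u}^C$ and $\overset{\ast}{u}^V$. For (i), both $\overset{\ast}{u}^V=(u_a\circ\overset{\ast}{\pi})\dot{\tilde{\partial}}^a$ and $\overset{\ast}{v}^V$ have vanishing component in the pull-back summand $(h\circ\overset{\ast}{\pi})^\ast F$, so the $(\rho,\eta)T\overset{\ast}{E}$-bracket reduces to the $T\overset{\ast}{E}$-bracket of the vertical parts. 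Since the coefficients are constant along the fibres, giving $\dot{\partial}^b(u_a\circ\overset{\ast}{\pi})=0$, and since the vertical fields $\dot{\partial}^a$ mutually commute, this bracket collapses to zero, which proves (i).

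For (ii), I would compute $[\overset{\ast}{u}^V,\overset{\ast}{v}^C]_{(\rho,\eta)T\overset{\ast}{E}}$ directly from the definition of the bracket, obtaining the dual analog of (\ref{Ar1}), namely a single term along $\dot{\tilde{\partial}}^b$. Separately I would expand the $F$-bracket $[\Gamma(\overset{\ast}{g},h)\overset{\ast}{u},\Gamma(\overset{\ast}{g},h)\overset{\ast}{v}]_{F,h}$ on $(F,\nu,N)$, push it through $\Gamma(\overset{\ast}{g}^{-1},h^{-1})$, and apply the vertical lift, reaching the dual analog of (\ref{Ar2}). Matching the two expressions term by term yields (ii).

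For (iii), the main obstacle, I would write both sides in the natural $(\rho,\eta)$-base. As in (\ref{lili00})--(\ref{lili000}), the $\overset{\ast}{\tilde{\partial}}_\gamma$-components coincide automatically, since they are controlled by the same tangent-map relation already established for $\overset{\ast}{u}^c$; the entire content therefore sits in the vertical components $\dot{\tilde{\partial}}^r$. I would isolate the discrepancy between $[\overset{\ast}{u}^C,\overset{\ast}{v}^C]$ and the target as an expression $\mathcal{C}^r$, the dual analog of (\ref{lili0}), and prove $\mathcal{C}^r=0$. The decisive algebraic input is the dual of the identity differentiated in the proof of Theorem \ref{KM}: from $\tilde{g}_{a\alpha}g^{\alpha d}=\delta_a^d$ one gets $g^{\gamma d}=g^{\alpha d}\tilde{g}_{a\alpha}g^{\gamma a}$, and hence $\partial_j(g^{\gamma d})=-g^{\alpha d}\,\partial_j(\tilde{g}_{a\alpha})\,g^{\gamma a}$; inserting this, together with the structure-function relation (\ref{P1}), into the paired terms of $\mathcal{C}^r$ forces their cancellation.

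The hard part is strictly combinatorial: the dual expression for $\mathcal{B}^r$ carries roughly a dozen terms, quadratic in $\overset{\ast}{u},\overset{\ast}{v}$ and their first and second derivatives, and each must be matched against the partner term arising from the complete lift of the $F$-bracket. Because passing from sections $u^a s_a$ of $E$ to sections $u_a s^a$ of $\overset{\ast}{E}$ transposes every raised and lowered index in $g^{\alpha a}$ and $\tilde{g}_{a\alpha}$, the bookkeeping differs from the primal case only in index placement and produces no new phenomenon. Once $\mathcal{C}^r=0$ is verified exactly as in the proof of Theorem \ref{KM}, parts (ii) and (iii) follow.
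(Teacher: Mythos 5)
Your proposal is correct and coincides with the paper's own treatment: the paper gives no independent argument for this theorem, stating only that it follows ``similar to Theorem \ref{KM}'', which is exactly the dualization you carry out. Your identification of the dual key identity $\partial _{j}(g^{\gamma d})=-g^{\alpha d}\,\partial _{j}(\tilde{g}_{a\alpha })\,g^{\gamma a}$ and the reuse of relation (\ref{P1}) match the mechanism of the primal proof term for term.
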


\section{Legendre transformation}
\begin{definition}
A Lagrange fundamental function on the vector bundle $\left( E,\pi
,M\right) $ is a function $E~\ ^{\underrightarrow{\ \ L\ \ }}~\
\mathbb{R}$ which satisfies the following conditions:\medskip
\medskip

$L_{1}.$ $L\circ u\in C^{\infty }\left( M\right) $, for any $u\in \Gamma
\left( E,\pi ,M\right) \setminus \left\{ 0\right\} $;\smallskip

$L_{2}.$ $L\circ 0\in C^{0}\left( M\right) $, where $0$ means the null
section of $\left( E,\pi ,M\right) .$\medskip
\end{definition}

\begin{remark}
If $\left( U,s_{U}\right) $ is a local vector $\left( m+r\right) $-chart,
then we obtain the following real functions defined on $\pi ^{-1}\left(
U\right) $:%
\begin{equation*}
\begin{array}{cc}
L_{i}{=}\frac{\partial L}{\partial x^{i}}, & L_{ib}{%
=}\frac{\partial ^{2}L}{\partial x^{i}\partial y^{b}}\vspace*{2mm}, \\
L_{a}{=}\frac{\partial L}{\partial y^{a}}, & L_{ab}{%
=}\frac{\partial ^{2}L}{\partial y^{a}\partial y^{b}}.%
\end{array}%
\end{equation*}
\end{remark}
\begin{definition}
If $L$ is a Lagrange fundamental function such that
\begin{equation*}
rank\left\Vert L_{ab}\left( u_{x}\right) \right\Vert =r,
\end{equation*}%
for any $u_{x}\in \pi ^{-1}\left( U\right) \backslash \left\{ 0_{x}\right\} $%
, then we will say that \emph{the Lagrange fundamental function }$L$\emph{\
is regular }and we obtain the real functions $\tilde{L}^{ab}$\ locally
defined by%
\begin{equation*}
\begin{array}{ccc}
\pi ^{-1}\left( U\right) & ^{\underrightarrow{\ \ \tilde{L}^{ab}\ \ }} &
\mathbb{R}, \\
u_{x} & \longmapsto & \tilde{L}^{ab}\left( u_{x}\right),%
\end{array}%
\end{equation*}%
where $\left\Vert \tilde{L}^{ab}\left( u_{x}\right) \right\Vert =\left\Vert
L_{ba}\left( u_{x}\right) \right\Vert ^{-1}$, for any $u_{x}\in \pi
^{-1}\left( U\right) \backslash \left\{ 0_{x}\right\} .$
\end{definition}
\begin{definition}\label{L2}
If $L$ is a Lagrange fundamental function, then we build the
Legendre
bundles morphism%
\begin{equation*}
\begin{array}{rcl}
E & ^{\underrightarrow{~\ \ \varphi _{L}~\ \ }} & \overset{\ast }{E} \\
\pi \downarrow &  & \downarrow \overset{\ast }{\pi } \\
M & ^{\underrightarrow{~\ \ Id_{M}~\ \ }} & M%
\end{array}%
,
\end{equation*}%
locally defined%
\begin{equation}\label{L1}
\begin{array}{ccc}
\pi ^{-1}( U) & ^{\underrightarrow{~\ \ \ \varphi _{L}~\ \ }} &
\overset{\ast }{\pi }^{-1}( U), \\
u_{x}=u^{a}( x) s_{a}( x) & \longmapsto & u^{a}(
x) L_{ab}( u_{x}) s^{b}( x),%
\end{array}
\end{equation}%
for any vector local $( m+r) $-charts $( U,s_{U}) $ and $(
U,\overset{\ast }{s}_{U}) $ of $( E,\pi ,M) $ and $( \overset{\ast
}{E},\overset{\ast }{\pi },M) $ respectively.
\end{definition}
Using the above definition, we deduce that if $u=u^as_a$ belongs to $\Gamma(E, \pi, M)$, then we obtain its Legendre transformation 
\[
\Gamma(\varphi_L, Id_M)(u)= (u^a(L_{ab}\circ u))s^b,
\]
belongs to $\Gamma(\overset{\ast }{E},\overset{\ast }{\pi },M)$.
\begin{definition}
If $L$ is a Lagrange fundamental function positively homogenous of
degree two, namely

$F_{1}.$ $L$ is positively $2$-homogenous on the fibres of vector bundle $%
\left( E,\pi ,M\right) ;$\smallskip

$F_{2}.$ For any vector local $(m+r)$-chart $\left( U,s_{U}\right) $ of $%
\left( E,\pi ,M\right) ,$ the hessian%
\begin{equation*}
\left\Vert L_{~ab}\left( u_{x}\right) \right\Vert,
\end{equation*}%
is positively define for any $u_{x}\in \pi ^{-1}\left( U\right) \backslash
\left\{ 0_{x}\right\} $, then $L$ will be called Finsler fundamental
function.
\end{definition}

\begin{proposition}\label{TT}
If $L$ is a Finsler fundamental function on the vector bundle
$\left( E,\pi ,M\right) $, then
\begin{equation*}
\varphi _{L}\left( u_{x}\right) =L_{b}\left( u_{x}\right) s^{b}\left(
x\right) ,~\forall u_{x}\in E.
\end{equation*}
\end{proposition}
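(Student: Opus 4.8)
The plan is to reduce the claimed formula to Euler's relation for positively homogeneous functions, read off in the fibre coordinates of a single local chart. First I would fix a vector local $(m+r)$-chart $(U,s_{U})$ of $(E,\pi ,M)$ and denote by $(x^{i},y^{a})$ the induced coordinates, so that a point $u_{x}=u^{a}(x)s_{a}(x)\in \pi ^{-1}(U)$ has fibre coordinates $y^{a}=u^{a}(x)$. By the local definition (\ref{L1}) of the Legendre morphism we have $\varphi _{L}(u_{x})=u^{a}(x)L_{ab}(u_{x})s^{b}(x)$, so the whole statement comes down to the pointwise identity $y^{a}L_{ab}(u_{x})=L_{b}(u_{x})$ on $\pi ^{-1}(U)\setminus \{0\}$; once this is known, substituting $u^{a}(x)=y^{a}$ turns the coefficient $u^{a}L_{ab}$ into $L_{b}$ and gives $\varphi _{L}(u_{x})=L_{b}(u_{x})s^{b}(x)$.

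To prove the pointwise identity I would use hypothesis $F_{1}$, namely that $L$ is positively $2$-homogeneous along the fibres, so that $L(x,\lambda y)=\lambda ^{2}L(x,y)$ for every $\lambda >0$. Differentiating this equality in $\lambda $ and evaluating at $\lambda =1$ gives Euler's identity $y^{a}L_{a}=2L$, valid on $\pi ^{-1}(U)\setminus \{0\}$, where $L_{a}=\partial L/\partial y^{a}$. I would then differentiate Euler's identity once more, with respect to $y^{b}$; using $\partial y^{a}/\partial y^{b}=\delta _{b}^{a}$ this yields $L_{b}+y^{a}L_{ab}=2L_{b}$, that is $y^{a}L_{ab}=L_{b}$, which is exactly the required relation.

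It remains to extend the formula across the zero section, where $L$ is only continuous (condition $L_{2}$) and the second derivatives $L_{ab}$ need not exist. Here I would argue by homogeneity: since $L_{a}$ is positively $1$-homogeneous (being the fibre derivative of a $2$-homogeneous function), its continuous values satisfy $L_{b}(0_{x})=0$, while $\varphi _{L}(0_{x})=0$ directly from (\ref{L1}) because $u^{a}(x)=0$; hence both sides of the claimed equality vanish on the zero section and the formula holds for all $u_{x}\in E$. Finally, because $\varphi _{L}$ is a globally well-defined bundle morphism, its verification in an arbitrary chart $(U,s_{U})$ establishes the statement everywhere.

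I do not expect any serious obstacle, since the computation is just two successive applications of Euler's theorem. The only point requiring care is the behaviour at the zero section: the smoothness guaranteed by $F_{1}$ holds only off the zero section, so the Euler differentiation must be carried out on $\pi ^{-1}(U)\setminus \{0\}$ and the zero-section case handled separately by the homogeneity and continuity argument above.
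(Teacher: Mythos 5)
Your proposal is correct and follows essentially the same route as the paper: both reduce the claim, via the local formula $\varphi_{L}(u_{x})=u^{a}(x)L_{ab}(u_{x})s^{b}(x)$, to the identity $u^{a}L_{ab}=L_{b}$, which is a consequence of the positive $2$-homogeneity of $L$ on the fibres. The only difference is one of detail: the paper simply asserts this identity from homogeneity, whereas you make the two applications of Euler's theorem explicit and add the (harmless but welcome) separate check on the zero section.
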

\begin{proof}
From (\ref{L1}) we have $\varphi _{L}( u_{x}) =u^a(x)L_{ab}( u_{x})
s^{b}(x).$ But, the Finsler fundamental function $L$ satisfies
\[
u^a(x)L_{ab}( u_{x})=L_b,
\]
because $L$ is positively $2$-homogenous on the fibres of $(E, \pi,
M)$. This complete the proof.
\end{proof}
\begin{definition}
A Hamilton fundamental function on the dual vector bundle $( \overset{%
\ast }{E},\overset{\ast }{\pi },M) $ is a function $\overset{%
\ast }{E}~\ ^{\underrightarrow{\ \ H\ \ }}~\ \mathbb{R}$ which satisfies the
following conditions:\medskip \medskip

$H_{1}.$ $H\circ \overset{\ast }{u}\in C^{\infty }( M) $, for any
$\overset{\ast }{u}\in \Gamma ( \overset{\ast }{E},\overset{\ast }{\pi }%
,M) \setminus \{ 0\} $;\smallskip

$H_{2}.$ $H\circ 0\in C^{0}( M) $, where $0$ means the null
section of $( \overset{\ast }{E},\overset{\ast }{\pi },M) .$%
\medskip
\end{definition}
\begin{remark}
If $( U,\overset{\ast }{s}_{U}) $ is a local vector $(
m+r) $-chart, then we obtain the following real functions defined on $%
\overset{\ast }{\pi }^{-1}( U) $:%
\begin{equation*}
\begin{array}{cc}
H_{i}{=}\frac{\partial H}{\partial x^{i}}, & H_{i}^{b}{=}\frac{\partial ^{2}H}{\partial x^{i}\partial p_{b}}, \\
H^{a}{=}\frac{\partial H}{\partial p_{a}}, & H^{ab}{%
=}\frac{\partial ^{2}H}{\partial p_{a}\partial p_{b}}.%
\end{array}%
\end{equation*}
\end{remark}
\begin{definition}
If $H$ is a Hamilton fundamental function such that
\begin{equation*}
\begin{array}{c}
rank\Vert H^{ab}( \overset{\ast }{u}_{x}) \Vert =r,%
\end{array}%
\end{equation*}%
for any $\overset{\ast }{u}_{x}\in \overset{\ast }{\pi }^{-1}( U)
\backslash \{ 0_{x}\} $, then we will say that \emph{the Hamilton
fundamental function }$H$\emph{\ is regular }and we obtain the real
functions $\tilde{H}_{ab}$\ locally defined by%
\begin{equation*}
\begin{array}{ccc}
\overset{\ast }{\pi }^{-1}( U) & ^{\underrightarrow{\ \ \tilde{H}%
_{ab}\ \ }} & \mathbb{R}, \\
\overset{\ast }{u}_{x} & \longmapsto & \tilde{H}_{ab}( \overset{\ast }{u%
}_{x}),%
\end{array}%
\end{equation*}%
where $\Vert \tilde{H}_{ab}( \overset{\ast }{u}_{x}) \Vert =\Vert
H^{ab}( \overset{\ast }{u}_{x})
\Vert ^{-1}$, for any $\overset{\ast }{u}_{x}\in \overset{\ast }{\pi }%
^{-1}( U) \backslash \{ 0_{x}\} .$
\end{definition}
\begin{definition}\label{L3}
If $H$ is a Hamilton fundamental function on the vector bundle $(
\overset{\ast }{E},\overset{\ast }{\pi },M) $, then we build the
Legendre bundles morphism%
\begin{equation*}
\begin{array}{rcl}
\overset{\ast }{E} & ^{\underrightarrow{~\ \ \ \varphi _{H}~\ \ }} & E \\
\overset{\ast }{\pi }\downarrow &  & \downarrow \pi \\
M & ^{\underrightarrow{~\ \ Id_{M}~\ \ }} & M%
\end{array}%
,
\end{equation*}%
where $\ \varphi _{H}$ is locally defined%
\begin{equation*}
\begin{array}{ccc}
\overset{\ast }{\pi }^{-1}( U) & ^{\underrightarrow{~\ \ \
\varphi _{H}~\ \ }} & \pi ^{-1}( U) \\
\overset{\ast }{u}_{x}=u_{a}( x) s^{a}( x), &
\longmapsto & u_{a}( x) H^{ab}( \overset{\ast }{u}%
_{x}) s_{b}( x),%
\end{array}%
\end{equation*}%
for any vector local $( m+r) $-chart $( U,s_{U}) $ of $%
( E,\pi ,M) $ and for any vector local $( m+r) $-chart
$( U,\overset{\ast }{s}_{U}) $ of $( \overset{\ast }{E},%
\overset{\ast }{\pi },M)$.
\end{definition}
Using the above definition, we deduce that if $\overset{\ast }{u}=u_{a}s^{a}$ belongs to $\Gamma( \overset{\ast }{E},\overset{%
\ast }{\pi },M)$, then we obtain its Legendre transformation 
\[
\Gamma(\varphi_H, Id_M)(\overset{\ast}{u})=( u_{a}( H^{ab}\circ u) ) s_{b},
\]
belongs to $\Gamma(E, \pi ,M)$.
\begin{definition}
If $H$ is Hamilton fundamental function positively homogeneous of
degree two, namely

$C_{1}.$ $H$ is positively $2$-homogeneous on the fibres of vector bundle $%
( \overset{\ast }{E},\overset{\ast }{\pi },M) ;$\smallskip

$C_{2}.$ For any vector local $(m+r)$-chart $( U,\overset{\ast }{s}%
_{U}) $ of $( \overset{\ast }{E},\overset{\ast }{\pi },M) ,$
the hessian:%
\begin{equation*}
\Vert H_{~}^{ab}( \overset{\ast }{u}_{x}) \Vert,
\end{equation*}%
is positively define for any $\overset{\ast }{u}_{x}\in \overset{\ast }{\pi }%
^{-1}( U) \backslash \{ 0_{x}\} $, then $H$ will be called Cartan
fundamental function.\\
Similar to proposition (\ref{TT}), we have the following
\end{definition}
\begin{proposition}
If $H$ is a Cartan fundamental function, then
\begin{equation*}
\varphi _{H}( \overset{\ast }{u}_{x}) =H^{b}( \overset{\ast }%
{u}_{x}) s_{b}( x) ,~\forall \overset{\ast }{u}_{x}\in \overset{\ast
}{E}.
\end{equation*}
\end{proposition}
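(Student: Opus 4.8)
The plan is to mimic the proof of Proposition \ref{TT} (the Finsler case), replacing the role of Euler's homogeneity relation for $L$ in the fibre coordinates $y^a$ by the corresponding relation for $H$ in the dual fibre coordinates $p_a$. First I would recall from Definition \ref{L3} that, in any vector local $(m+r)$-chart, the Legendre morphism acts by $\varphi_H(\overset{\ast}{u}_x) = u_a(x) H^{ab}(\overset{\ast}{u}_x) s_b(x)$, where $u_a(x)$ are the fibre components of $\overset{\ast}{u}_x$ and $H^{ab} = \partial^2 H / \partial p_a \partial p_b$. Hence it suffices to establish the pointwise identity $u_a(x) H^{ab}(\overset{\ast}{u}_x) = H^b(\overset{\ast}{u}_x)$, since the desired conclusion follows immediately upon substituting this into the local expression for $\varphi_H$.

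The key step is to exploit condition $C_1$, that $H$ is positively $2$-homogeneous on the fibres of $(\overset{\ast}{E},\overset{\ast}{\pi},M)$. By Euler's identity for positively homogeneous functions, $2$-homogeneity in the variables $p_a$ yields $p_a \, \partial H / \partial p_a = 2H$. Differentiating this relation with respect to $p_b$ gives $\partial H / \partial p_b + p_a \, \partial^2 H / \partial p_a \partial p_b = 2 \, \partial H / \partial p_b$, that is, $p_a H^{ab} = H^b$ in the notation of the preceding remark.

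Finally I would evaluate this identity at the point $\overset{\ast}{u}_x$. Since the fibre coordinates $p_a$ at $\overset{\ast}{u}_x$ are precisely the components $u_a(x)$, the relation $p_a H^{ab} = H^b$ becomes $u_a(x) H^{ab}(\overset{\ast}{u}_x) = H^b(\overset{\ast}{u}_x)$, which is exactly the identity needed; substituting it into the local expression for $\varphi_H$ gives $\varphi_H(\overset{\ast}{u}_x) = H^b(\overset{\ast}{u}_x) s_b(x)$. There is no genuine obstacle here, as the authors themselves indicate by noting the analogy with Proposition \ref{TT}. The only points requiring care are the bookkeeping between the abstract fibre components $u_a(x)$ of the section and the coordinate functions $p_a$ on $\overset{\ast}{E}$, and observing that the computation is valid on $\overset{\ast}{\pi}^{-1}(U) \setminus \{0_x\}$, where the hessian $\Vert H^{ab} \Vert$ has full rank by condition $C_2$; the homogeneity argument itself is purely local and coordinate-wise.
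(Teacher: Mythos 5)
Your proposal is correct and follows essentially the same route as the paper: the paper proves the Finsler analogue (Proposition on $\varphi_L$) by combining the local formula $\varphi_L(u_x)=u^a(x)L_{ab}(u_x)s^b(x)$ with the Euler-type identity $u^aL_{ab}=L_b$ coming from positive $2$-homogeneity, and then simply asserts the Cartan case is "similar," which is exactly the dual computation you carry out with $p_aH^{ab}=H^b$. Your version is in fact slightly more careful than the paper's, since you actually derive the Euler identity by differentiating $p_a\partial H/\partial p_a=2H$ and flag the bookkeeping between the coordinates $p_a$ and the fibre components $u_a(x)$, neither of which the paper spells out.
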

\begin{theorem}
If $L$ is a Lagrange fundamental function on the vector bundle $(
E,\pi ,M) $ and $H$ is a Hamiltonian on the dual vector bundle $(
\overset{\ast }{E},\overset{\ast }{\pi },M) ,$ then:

i) $\varphi _{H}\circ \varphi _{L}=Id_{\pi ^{-1}( U) }$ if and only
if $L$ is regular and $\tilde{L}^{ab}=H^{ab}\circ \varphi _{L};$

ii) $\varphi _{L}\circ \varphi _{H}=Id_{\overset{\ast }{\pi }^{-1}(
U) }$ if and only if $H$ is regular and $\tilde{H}_{ab}=L_{ab}\circ
\varphi _{H}.$
\end{theorem}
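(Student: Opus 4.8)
The plan is to pass to an arbitrary vector local $(m+r)$-chart and compute the two compositions fibrewise. Recall that by Definitions \ref{L2} and \ref{L3} the Legendre morphisms act on fibre coordinates by $\varphi_L(u_x)=u^aL_{ab}(u_x)s^b(x)$ and $\varphi_H(\overset{\ast}{u}_x)=u_aH^{ab}(\overset{\ast}{u}_x)s_b(x)$, so that the fibre components of $\varphi_L(u_x)$ are $p_b=u^aL_{ab}(u_x)$. Since $L_{ab}$ and $H^{ab}$ are Hessians they are symmetric, and the regularity matrices are characterized by $L_{ab}\tilde L^{bc}=\delta_a^{\,c}$ and $H^{ab}\tilde H_{bc}=\delta^a_{\,c}$ whenever they are defined. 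First I would record that both $\varphi_H\circ\varphi_L$ and $\varphi_L\circ\varphi_H$ cover $Id_M$, so that each statement reduces entirely to an identity between fibre components over a fixed $x$.

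For i), the sufficiency direction is a direct substitution: if $L$ is regular with $\tilde L^{ab}=H^{ab}\circ\varphi_L$, then the $c$-th fibre component of $(\varphi_H\circ\varphi_L)(u_x)$ is $u^aL_{ab}(u_x)\,H^{bc}(\varphi_L(u_x))=u^aL_{ab}(u_x)\tilde L^{bc}(u_x)=u^a\delta_a^{\,c}=u^c$, so the composite is $Id_{\pi^{-1}(U)}$. For the necessity direction I would read off from $\varphi_H\circ\varphi_L=Id_{\pi^{-1}(U)}$ the coordinate identity $u^aL_{ab}(u_x)\,H^{bc}(\varphi_L(u_x))=u^c$, valid for every $u_x$ in the fibre over $x$; letting $u_x$ range over the whole fibre one extracts the matrix identity $L_{ab}\,(H^{bc}\circ\varphi_L)=\delta_a^{\,c}$. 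This shows at once that $\|L_{ab}(u_x)\|$ is invertible, i.e. its rank is $r$ so that $L$ is regular, and that its inverse is $H^{bc}\circ\varphi_L$, which is precisely the relation $\tilde L^{ab}=H^{ab}\circ\varphi_L$.

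Part ii) is then proved by the same computation with the roles of $(E,\pi,M)$ and $(\overset{\ast}{E},\overset{\ast}{\pi},M)$, of $\varphi_L$ and $\varphi_H$, and of the upper/lower index data interchanged: the $b$-th fibre component of $(\varphi_L\circ\varphi_H)(\overset{\ast}{u}_x)$ is $u_aH^{ac}(\overset{\ast}{u}_x)\,L_{cb}(\varphi_H(\overset{\ast}{u}_x))$, and the equivalence with ``$H$ regular and $\tilde H_{ab}=L_{ab}\circ\varphi_H$'' follows exactly as above using $H^{ac}\tilde H_{cb}=\delta^a_{\,b}$. The step I expect to be delicate is the necessity direction, namely passing from the contracted identity $u^aL_{ab}(H^{bc}\circ\varphi_L)=u^c$ to the genuine matrix identity $L_{ab}(H^{bc}\circ\varphi_L)=\delta_a^{\,c}$, because the coefficient matrices themselves depend on the fibre point $u_x$ and ``arbitrariness of $u$'' alone is not enough. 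I would justify this by differentiating the contracted identity in the fibre directions, using $\dot\partial_b(U^a)=\delta_b^a$ and $\partial_i(U^a)=0$, and by exploiting the fibrewise linearity of the index-lowering map $u\mapsto u^aL_{ab}$, which is exactly where the regularity of $L$ (and dually of $H$) enters.
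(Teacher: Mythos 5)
Your route is the same as the paper's: both pass to a local chart, compute $(\varphi_H\circ\varphi_L)(u_x)=u^a(x)L_{ab}(u_x)H^{bc}(\varphi_L(u_x))s_c(x)$ directly from the definitions of $\varphi_L$ and $\varphi_H$, and reduce statement (i) to the fibrewise matrix identity $L_{ab}(u_x)H^{bc}(\varphi_L(u_x))=\delta_a^{\,c}$, with (ii) handled by the same computation with the roles interchanged. Your sufficiency argument is exactly the paper's display read backwards and is fine. The difference is that you explicitly isolate the necessity direction --- passing from the contracted identity $u^aL_{ab}(u_x)H^{bc}(\varphi_L(u_x))=u^c$ to the uncontracted matrix identity --- as the delicate step, whereas the paper simply declares the two statements equivalent (``if and only if'') with no comment. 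You are right to worry: writing $A_a{}^c(u_x)=L_{ab}(u_x)H^{bc}(\varphi_L(u_x))$, the hypothesis only says that $u$ is a left eigenvector of $A(u_x)$ with eigenvalue $1$ at each point of the fibre, and since $A$ itself varies with $u_x$ this does not by itself force $A=I$.

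However, the repair you sketch does not close the gap. Differentiating $u^aA_a{}^c(u_x)=u^c$ along the fibre coordinate $y^d$ yields $A_d{}^c(u_x)+u^a\,\dot\partial_d\bigl(L_{ab}\bigr)H^{bc}(\varphi_L(u_x))+u^aL_{ab}\,\dot\partial_d\bigl(H^{bc}\circ\varphi_L\bigr)=\delta_d^{\,c}$, and the two extra terms --- involving the third fibre derivatives of $L$ and the fibre derivatives of $H^{bc}\circ\varphi_L$ --- do not cancel for free. The reasons you give for discarding them fail: the index-lowering map $u\mapsto u^aL_{ab}(u_x)$ is \emph{not} fibrewise linear unless $L_{ab}$ is constant along the fibres (it is linear when $L$ is quadratic, and in the $2$-homogeneous Finsler case Euler's identity gives $u^a\dot\partial_d(L_{ab})=0$, but neither holds for a general Lagrange fundamental function); and the regularity of $L$ cannot be ``exploited'' at this stage because in the necessity direction it is part of the conclusion, not a hypothesis. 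So your proof, like the paper's, establishes sufficiency cleanly but leaves necessity unproven; unlike the paper you at least name the missing step, but the justification you propose for it is not sound as written.
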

\begin{proof}
Using definition (\ref{L2}) and (\ref{L3}), we deduce that
\begin{equation*}
\begin{array}{cl}
\varphi _{H}\circ \varphi _{L}\left( u_{x}\right) & =\varphi _{H}\left(
u^{a}\left( x\right) L_{ab}\left( u_{x}\right) s^{b}\left( x\right) \right)
\\
& =u^{a}\left( x\right) L_{ab}\left( u_{x}\right) H^{bc}\left( \varphi
_{L}\left( u_{x}\right) \right) s_{c}\left( x\right) \\
& =Id_{\pi ^{-1}\left( U\right) }\left( u_{x}\right),
\end{array}%
\end{equation*}%
if and only if
\begin{equation*}
L_{ab}\left( u_{x}\right) H^{bc}\left( \varphi _{L}\left( u_{x}\right)
\right) =\delta _{a}^{c}\left( u_{x}\right) ,
\end{equation*}%
$~$for any $u_{x}\in \pi ^{-1}\left( U\right)$. Thus we have (i).
Similar, we can prove (ii).
\end{proof}

\begin{definition}
If $L$ is a Lagrange fundamental function on the vector bundle $\left( E,\pi
,M\right) ,$ then the Hamilton fundamental function $H,$ locally defined by%
\begin{equation*}
\begin{array}{ccc}
\overset{\ast }{\pi }^{-1}\left( U\right) & ^{\underrightarrow{~\ \
H~\ \ }}
& \mathbb{R}, \\
\overset{\ast }{u}_{x}=u_{a}\left( x\right) s^{a}\left( x\right) &
\longmapsto & u_{a}\left( x\right) u^{a}\left( x\right) -L\left( u_{x}\right),%
\end{array}%
\end{equation*}%
for any vector local $( m+r) $-chart $( U,\overset{\ast }{s}%
_{U}) $ of $( \overset{\ast }{E},\overset{\ast }{\pi },M) ,$ where
$u^{a}( x)$, $a\in\{1, \cdots, r\}$, are the
components of the solution of the system of differentiable equations%
\begin{equation*}
\begin{array}{ccc}
\left\{
\begin{array}{ccc}
u_{1}\left( x\right)&=&u^{a}\left( x\right) L_{a1}\left( u_{x}\right), \\
\vdots&\vdots&\vdots\\
u_{r}\left( x\right)&=&u^{a}\left( x\right) L_{ar}\left( u_{x}\right),%
\end{array}%
\right. ,~u_{x}\in \pi ^{-1}\left( U\right) ,%
\end{array}
\end{equation*}%
will be called the \emph{Legendre transformation of the Lagrangian} $L.$
\end{definition}
It is remarkable that in the general case, if $L$ is a Lagrange fundamental function on the vector bundle $%
\left( E,\pi ,M\right) $ and $H$ is its Legendre transformation, then $%
H\circ \varphi _{L}\neq L$, but in particular, if $L$ is a Finsler
fundamental function on the vector bundle
$\left( E,\pi ,M\right) $ and $H$ is its Legendre transformation, then $%
H\circ \varphi _{L}=L.$
\begin{definition}
If $H$ is a Hamilton fundamental function on the dual vector bundle
$( \overset{\ast }{E},\overset{\ast }{\pi },M) ,$ then the Lagrange
fundamental function $L,$ locally defined by%
\begin{equation*}
\begin{array}{ccc}
\pi ^{-1}( U) & ^{\underrightarrow{~\ \ L~\ \ }} & \mathbb{R}, \\
u_{x}=u^{a}( x) s_{a}( x) & \longmapsto & u^{a}(
x) u_{a}( x) -H( \overset{\ast }{u}_{x}),%
\end{array}%
,
\end{equation*}%
for any vector local $\left( m+r\right) $-chart $\left( U,s_{U}\right) $ of $%
\left( E,\pi ,M\right) ,$ where $\left( u_{a}\left( x\right) ,~a\in
\overline{1,r}\right) $ are the components of the solution of the system of
differentiable equations%
\begin{equation*}
\begin{array}{ccc}
\left\{
\begin{array}{ccc}
u^{1}( x)&=&u_{a}( x) H^{a1}( \overset{\ast }{u}%
_{x}) \\
\vdots&\vdots&\vdots\\
u^{r}( x)&=&u_{a}( x) H^{ar}( \overset{\ast }{u}%
_{x})%
\end{array}%
\right. ,~\overset{\ast }{u}_{x}\in \overset{\ast }{\pi }^{-1}(
U) ,%
\end{array}%
\end{equation*}%
will be called the Legendre transformation of the Hamiltonian $H.$
\end{definition}
In general, if $H$ is a Hamilton fundamental function on the vector bundle $%
(\overset{\ast }{E},\overset{\ast }{\pi },M) $ and $L$ is its
Legendre transformation, then $L\circ \varphi _{H}\neq H$, but
in particular, if $H$ is a Cartan fundamental function on the vector bundle $%
( \overset{\ast }{E},\overset{\ast }{\pi },M) $ and $L$ is its
Legendre transformation, then $L\circ \varphi _{H}=H.$
\begin{remark}
The Hamilton fundamental function $H$ is the Legendre transformation of the
Lagrange fundamental function $L$ if and only if the Lagrange fundamental
function $L$ is the Legendre transformation of the Hamilton fundamental
function $H.$
\end{remark}
\section{Duality between vertical and complete lifts}
Let $L$ be a Lagrangian on the vector bundle $\left( E,\pi ,M\right)
$ and let $H$ be its Legendre transformation.

Using the Legendre bundles morphism $\left( \varphi _{L},Id_{M}\right) $, we
build the vector bundles morphism $\left( \left( \rho ,\eta \right) T\varphi
_{L},\varphi _{L}\right) $ given by the diagram%
\begin{equation*}
\begin{array}{rcl}
\left( \rho ,\eta \right) TE & ^{\underrightarrow{~\ \left( \rho ,\eta
\right) T\varphi _{L}~\ }} & \left( \rho ,\eta \right) T\overset{\ast }{E}
\\
\left( \rho ,\eta \right) \tau _{E}\downarrow &  & \downarrow \left( \rho
,\eta \right) \tau _{\overset{\ast }{E}} \\
E & ^{\underrightarrow{~\ \ \ \ \varphi _{L~\ \ \ \ }}} & \overset{\ast }{E}%
\end{array}%
,  \label{eq108}
\end{equation*}%
such that%
\begin{equation*}
\begin{array}{cl}
\Gamma ( ( \rho ,\eta ) T\varphi _{L},\varphi _{L}) ( Z^{\alpha
}\tilde{\partial}_{\alpha }) &\!\!\!\!=( Z^{\alpha }\circ \varphi
_{H}) {\dot{{\tilde{\partial}}}}_{\alpha }+[
( \rho _{\alpha }^{i}{\circ }h{\circ }\pi ) Z^{\alpha }L_{ib}%
] \circ \varphi _{H}{\dot{{\tilde{\partial}}}}^{b}, \\
\Gamma ( ( \rho ,\eta ) T\varphi _{L},\varphi _{L}) (
Y^{a}{\dot{{\tilde{\partial}}}}_{a}) &\!\!\!\!=(
Y^{a}L_{ab}) \circ \varphi _{H}{\dot{{\tilde{\partial}}}^{b}},%
\end{array}
\label{eq109}
\end{equation*}%
for any $Z^{\alpha }\tilde{\partial}_{\alpha }+Y^{a}{\dot{{\tilde{%
\partial}}}}_{a}\in \Gamma \left( \left( \rho ,\eta \right) TE,\left( \rho
,\eta \right) \tau _{E},E\right)$. The vector bundles morphism
$\left( \left( \rho ,\eta \right) T\varphi _{L},\varphi _{L}\right)
$ will be called the \emph{tangent }$\left( \rho ,\eta \right)
$-\emph{application of the Legendre bundles morphism associated to
the Lagrangian }$L$. Using this application together with
(\ref{com}), (\ref{L4}) we deduce the
following theorems.
\begin{theorem}
If $u=u^{a}s_{a}\in \Gamma \left( E,\pi ,M\right) $ such that
\begin{equation*}
\begin{array}{c}
\Gamma ( ( \rho ,\eta ) T\varphi _{L},\varphi _{L})
( u^{V}) =\Gamma(\varphi_L, Id_M)(u),%
\end{array}%
\end{equation*}%
then
\begin{align*}
u^{a}\circ \pi \circ \varphi _{H}=u^{a}\circ \overset{\ast }{\pi },\ \ \ 
L_{ab}\circ \varphi _{H}=L_{ab}\circ u\circ \overset{\ast }{\pi }.
\end{align*}
\end{theorem}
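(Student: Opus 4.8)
The plan is to expand both sides of the hypothesised identity in the natural $(\rho ,\eta )$-base $(\overset{\ast }{\tilde{\partial}}_{\alpha },\dot{\tilde{\partial}}^{b})$ of $((\rho ,\eta )T\overset{\ast }{E},(\rho ,\eta )\tau _{\overset{\ast }{E}},\overset{\ast }{E})$ and then to read off the two claimed equalities by comparing coefficients. First I would substitute the local expression of the vertical lift from (\ref{L4}), namely $u^{V}=(u^{a}\circ \pi )\dot{\tilde{\partial}}_{a}$, into the tangent $(\rho ,\eta )$-application of $\varphi _{L}$. Because $u^{V}$ has only $\dot{\tilde{\partial}}_{a}$-components, just the second transformation rule $\Gamma ((\rho ,\eta )T\varphi _{L},\varphi _{L})(Y^{a}\dot{\tilde{\partial}}_{a})=(Y^{a}L_{ab})\circ \varphi _{H}\,\dot{\tilde{\partial}}^{b}$ contributes, and with $Y^{a}=u^{a}\circ \pi $ it gives
\begin{equation*}
\Gamma ((\rho ,\eta )T\varphi _{L},\varphi _{L})(u^{V})=\big( (u^{a}\circ \pi )L_{ab}\big) \circ \varphi _{H}\,\dot{\tilde{\partial}}^{b}=(u^{a}\circ \pi \circ \varphi _{H})(L_{ab}\circ \varphi _{H})\,\dot{\tilde{\partial}}^{b}.
\end{equation*}

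Next I would write out the right-hand side. By the discussion following Definition \ref{L2}, the Legendre transformation of $u=u^{a}s_{a}$ is the section $\Gamma (\varphi _{L},Id_{M})(u)=\big( u^{a}(L_{ab}\circ u)\big) s^{b}$ of $(\overset{\ast }{E},\overset{\ast }{\pi },M)$; identifying it with its vertical lift (recall $(s^{a})^{V}=\dot{\tilde{\partial}}^{a}$ from (\ref{L6})), its $\dot{\tilde{\partial}}^{b}$-coefficient is $\big( u^{a}(L_{ab}\circ u)\big) \circ \overset{\ast }{\pi }=(u^{a}\circ \overset{\ast }{\pi })(L_{ab}\circ u\circ \overset{\ast }{\pi })$. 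Matching the $\dot{\tilde{\partial}}^{b}$-coefficients of the two sides therefore yields, for each $b$,
\begin{equation*}
(u^{a}\circ \pi \circ \varphi _{H})(L_{ab}\circ \varphi _{H})=(u^{a}\circ \overset{\ast }{\pi })(L_{ab}\circ u\circ \overset{\ast }{\pi }).
\end{equation*}

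It then remains to separate this product identity into the two asserted equalities, which is where the real content lies. Here I would exploit that $\varphi _{H}$ is a vector bundles morphism covering $Id_{M}$, so that $\pi \circ \varphi _{H}=\overset{\ast }{\pi }$; composing with the base component $u^{a}$ of the section gives the first equality $u^{a}\circ \pi \circ \varphi _{H}=u^{a}\circ \overset{\ast }{\pi }$ at once. Substituting this back, the two sides share the same component factor $u^{a}\circ \overset{\ast }{\pi }$, and matching the remaining Hessian factors produces the second equality $L_{ab}\circ \varphi _{H}=L_{ab}\circ u\circ \overset{\ast }{\pi }$. The step I expect to be the main obstacle is exactly this factorisation: since the coefficients carry a summation over $a$, one must justify that the Hessian factors may be matched index by index rather than only in the summed form. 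I would argue this by appealing to the fibrewise invertibility of the Legendre morphism (the regularity of $\Vert L_{ab}\Vert $) together with the relation $\pi \circ \varphi _{H}=\overset{\ast }{\pi }$, which pins down $\varphi _{H}$ and $u\circ \overset{\ast }{\pi }$ as the two competing maps $\overset{\ast }{E}\to E$ whose composites with $L_{ab}$ are forced to agree.
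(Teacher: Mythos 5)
Your computation reproduces what the paper intends: the paper gives no written proof of this theorem (it only remarks that the result is ``deduced'' from the tangent $(\rho ,\eta )$-application together with the local formulas for the lifts), and the intended argument is precisely the coefficient comparison you carry out. Both of your expansions are correct, and your observation that $\pi \circ \varphi _{H}=\overset{\ast }{\pi }$ (because $\varphi _{H}$ covers $Id_{M}$) immediately gives the first asserted equality --- indeed that equality holds unconditionally, independently of the hypothesis.

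The remaining step, however, is a genuine gap, and your proposed repair does not close it. After using $\pi \circ \varphi _{H}=\overset{\ast }{\pi }$ you are left, for each $b$, with the single contracted relation
\begin{equation*}
\sum_{a}\left( u^{a}\circ \overset{\ast }{\pi }\right) \bigl( L_{ab}\circ \varphi _{H}-L_{ab}\circ u\circ \overset{\ast }{\pi }\bigr) =0,
\end{equation*}
and from this one cannot conclude that each bracket vanishes: $u$ is a \emph{fixed} section, so $(u^{a})$ is a fixed $r$-vector over each base point, and a matrix identity contracted against a single fixed vector does not determine the matrix. Regularity of $\Vert L_{ab}\Vert $ does not help here --- invertibility of the Hessian lets you invert $L_{ab}$ as a matrix, not cancel the coefficients $u^{a}$ from a sum over $a$. (The separation would be legitimate if the hypothesis were assumed for all sections $u$, or if $r=1$.) To be fair, the paper supplies no argument for this step either, so your proof is exactly as complete as the paper's; but as a strict implication the second conclusion does not follow from the stated hypothesis, and an honest statement should either record only the contracted identity above or strengthen the hypothesis.
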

\begin{theorem}
If $u=u^{a}s_{a}\in \Gamma \left( E,\pi ,M\right) $ such that
\begin{equation*}
\begin{array}{c}
\Gamma ( ( \rho ,\eta ) T\varphi _{L},\varphi _{L})
( u^{C}) =\Gamma(\varphi_L, Id_M)(u),%
\end{array}%
\end{equation*}%
then%
\begin{equation*}
\begin{array}{rl}
( g^{\alpha e}u_{e}) \circ \overset{\ast }{\pi } &\!\!\!\!=(
g_{e}^{\alpha }u^{e}) \circ \pi \circ \varphi _{H}, \\
U_{a}[ K^{\gamma a}( \overset{\ast }{u}) \circ h\circ
\overset{\ast }{\pi }] ( \tilde{g}_{b\gamma }\circ \overset{\ast }%
{\pi }) &\!\!\!\!=\{ U^{a}( K_{a}^{\gamma }( u) \circ h\circ \pi) (
\tilde{g}_{\gamma }^{c}\circ \pi )
L_{cb}\} \circ \varphi _{H} \\
& -\{ ( \rho _{\alpha }^{i}\circ h\circ \pi )( ( g_{e}^{\alpha
}u^{e}) \circ \pi ) L_{ib}\} \circ \varphi
_{H}.%
\end{array}%
\end{equation*}
\end{theorem}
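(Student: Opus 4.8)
The plan is to compare the two sides of the hypothesis, both of which are sections of $((\rho,\eta)T\overset{\ast}{E},(\rho,\eta)\tau_{\overset{\ast}{E}},\overset{\ast}{E})$, by expanding them in the natural $(\rho,\eta)$-base $(\overset{\ast}{\tilde{\partial}}_{\alpha},\dot{\tilde{\partial}}^{b})$ and matching coefficients. Here the right-hand side $\Gamma(\varphi_{L},Id_{M})(u)$ is read as the complete $(\overset{\ast}{g},h)$-lift $\overset{\ast}{u}^{C}$ of the Legendre transform $\overset{\ast}{u}=\Gamma(\varphi_{L},Id_{M})(u)=(u^{a}(L_{ab}\circ u))s^{b}$, so that both members genuinely live in the same bundle; I write $u_{e}:=u^{a}(L_{ae}\circ u)$ for its components. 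This is the reading forced by consistency with the preceding vertical-lift theorem, where the analogous equality matches only the vertical part.

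First I would expand $u^{C}$ by (\ref{com}) as $u^{C}=Z^{\alpha}\tilde{\partial}_{\alpha}+Y^{b}\dot{\tilde{\partial}}_{b}$ with $Z^{\alpha}=(g_{e}^{\alpha}\circ\pi)(u^{e}\circ\pi)$ and $Y^{b}=-U^{a}(K_{a}^{\gamma}(u)\circ h\circ\pi)(\tilde{g}_{\gamma}^{b}\circ\pi)$. Applying the tangent $(\rho,\eta)$-application $\Gamma((\rho,\eta)T\varphi_{L},\varphi_{L})$ term by term, using its two defining formulas and $\mathcal{F}$-linearity, the horizontal generator $\tilde{\partial}_{\alpha}$ contributes $(Z^{\alpha}\circ\varphi_{H})\overset{\ast}{\tilde{\partial}}_{\alpha}$ (the Greek-indexed horizontal generator of $(\rho,\eta)T\overset{\ast}{E}$) together with a vertical piece $[(\rho_{\alpha}^{i}\circ h\circ\pi)Z^{\alpha}L_{ib}]\circ\varphi_{H}\,\dot{\tilde{\partial}}^{b}$, while the vertical generator $\dot{\tilde{\partial}}_{a}$ contributes $(Y^{a}L_{ab})\circ\varphi_{H}\,\dot{\tilde{\partial}}^{b}$. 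The key structural point is that the single vertical generator $\dot{\tilde{\partial}}^{b}$ receives contributions from both the horizontal and the vertical part of $u^{C}$.

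Next I would expand the right-hand side by (\ref{L5}), namely $\overset{\ast}{u}^{C}=((g^{\alpha e}\circ\overset{\ast}{\pi})(u_{e}\circ\overset{\ast}{\pi}))\overset{\ast}{\tilde{\partial}}_{\alpha}-U_{a}(K^{\gamma a}(\overset{\ast}{u})\circ h\circ\overset{\ast}{\pi})(\tilde{g}_{b\gamma}\circ\overset{\ast}{\pi})\dot{\tilde{\partial}}^{b}$. Equating the $\overset{\ast}{\tilde{\partial}}_{\alpha}$-coefficients gives $Z^{\alpha}\circ\varphi_{H}=(g^{\alpha e}u_{e})\circ\overset{\ast}{\pi}$, that is, the first asserted identity $(g^{\alpha e}u_{e})\circ\overset{\ast}{\pi}=(g_{e}^{\alpha}u^{e})\circ\pi\circ\varphi_{H}$. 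Equating the $\dot{\tilde{\partial}}^{b}$-coefficients gives $[(\rho_{\alpha}^{i}\circ h\circ\pi)Z^{\alpha}L_{ib}+Y^{a}L_{ab}]\circ\varphi_{H}=-U_{a}(K^{\gamma a}(\overset{\ast}{u})\circ h\circ\overset{\ast}{\pi})(\tilde{g}_{b\gamma}\circ\overset{\ast}{\pi})$; substituting the expression for $Y^{a}$, multiplying by $-1$, and relabelling the dummy indices turns this into the second asserted identity.

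The computation is elementary once the pieces are assembled, so the only genuine obstacle is bookkeeping: keeping the Greek fibre indices of $F$ separate from the Latin fibre indices of $E$ and $\overset{\ast}{E}$, tracking the two distinct pull-backs $\pi$ and $\overset{\ast}{\pi}$ together with the compositions with $\varphi_{H}$, and correctly identifying the intended target generators in the (mildly typo-prone) defining formulas for $\Gamma((\rho,\eta)T\varphi_{L},\varphi_{L})$. I expect the sign and the index relabelling in the final $\dot{\tilde{\partial}}^{b}$-equation to be the most error-prone step; as a consistency check I would verify that specializing to $u^{V}$ in place of $u^{C}$ reproduces the identities $u^{a}\circ\pi\circ\varphi_{H}=u^{a}\circ\overset{\ast}{\pi}$ and $L_{ab}\circ\varphi_{H}=L_{ab}\circ u\circ\overset{\ast}{\pi}$ of the preceding theorem.
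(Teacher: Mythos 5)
Your proposal is correct and follows exactly the route the paper intends: the paper gives no written proof of this theorem beyond the remark that it follows from the tangent $(\rho,\eta)$-application together with the local formulas for the complete lifts, and your coefficient-matching in the natural $(\rho,\eta)$-base of $(\rho,\eta)T\overset{\ast}{E}$ --- including the necessary reinterpretation of the right-hand side of the hypothesis as $\overset{\ast}{u}^{C}$ with $\overset{\ast}{u}=\Gamma(\varphi_{L},Id_{M})(u)$, and the observation that $\dot{\tilde{\partial}}^{b}$ collects contributions from both the horizontal and vertical parts of $u^{C}$ --- is precisely that computation. The sign and dummy-index bookkeeping you carry out does reproduce both stated identities.
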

Using the bundles morphism $\left( \varphi _{H},Id_{M}\right) $, we
build the vector bundles morphism $\left( \left( \rho ,\eta \right)
T\varphi
_{H},\varphi _{H}\right) $ given by the diagram%
\begin{equation*}
\begin{array}{rcl}
\left( \rho ,\eta \right) T\overset{\ast }{E} & ^{\underrightarrow{~\ \left(
\rho ,\eta \right) T\varphi _{H}~\ }} & \left( \rho ,\eta \right) TE \\
\left( \rho ,\eta \right) \tau _{\overset{\ast }{E}}\downarrow &  &
\downarrow \left( \rho ,\eta \right) \tau _{E} \\
E^{\ast } & ^{\underrightarrow{~\ \ \ \ \varphi _{H~\ \ \ \ }}} & E%
\end{array}
, \label{eq110}
\end{equation*}%
such that%
\begin{equation*}
\begin{array}{cl}
\Gamma ( ( \rho ,\eta ) T\varphi _{H},\varphi _{H}) ( Z^{\alpha
}\overset{\ast }{\tilde{\partial}}_{\alpha }) &
\!\!\!\!=( Z^{\alpha }\circ \varphi _{L}) \tilde{\partial}_{\alpha }+%
[ ( \rho _{\alpha }^{i}{\circ }h{\circ }\overset{\ast }{\pi }%
) Z^{\alpha }H_{i}^{b}] \circ \varphi _{L}{\dot{{\tilde{\partial}}}}_{b}, \\
\Gamma ( ( \rho ,\eta ) T\varphi _{H},\varphi _{H}) (
Y_{a}{\dot{{\tilde{\partial}}}}^{a}) &\!\!\!\!=(
Y_{a}H^{ab}) \circ \varphi _{L}{\dot{{\tilde{\partial}}}}_{b},%
\end{array}
\label{eq111}
\end{equation*}%
for any $Z^{\alpha }\overset{\ast }{\tilde{\partial}}_{\alpha }+Y_{a}\overset%
{\cdot }{\tilde{\partial}}^{a}\in \Gamma ( ( \rho ,\eta ) T%
\overset{\ast }{E},( \rho ,\eta ) \tau _{\overset{\ast }{E}},%
\overset{\ast }{E}) .$ The vector bundles morphism $\left( \left(
\rho ,\eta \right) T\varphi _{H},\varphi _{H}\right) $ will be
called the \emph{tangent }$\left( \rho ,\eta \right)
$-\emph{application of the Legendre bundles morphism associated to
the Hamiltonian }$H$. Using this application together with
 (\ref{L5}) and (\ref{L6}) we deduce the
following theorems.
\begin{theorem}
If $\overset{\ast }{u}=u_{a}s^{a}\in \Gamma ( \overset{\ast }{E},%
\overset{\ast }{\pi },M) $ such that
\begin{equation*}
\begin{array}{c}
\Gamma ( ( \rho ,\eta ) T\varphi _{H},\varphi _{H})
( \overset{\ast }{u}^{V}) =\Gamma(\varphi_H, Id_M)(\overset{\ast }{u}),%
\end{array}%
\end{equation*}%
then
\begin{align*}
u_{a}\circ \overset{\ast }{\pi }\circ \varphi _{L}=u_{a}\circ \pi, \ \ \ 
H^{ab}\circ \varphi _{L}=H^{ab}\circ \overset{\ast }{u}\circ \pi .%
\end{align*}
\end{theorem}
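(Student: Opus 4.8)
The plan is to run the exact dual of the proof of the preceding ($\varphi _L$) theorem, exchanging the roles of $(E,\pi ,M)$ and $(\overset{\ast }{E},\overset{\ast }{\pi },M)$, of $L_{ab}$ and $H^{ab}$, and of $\varphi _L$ and $\varphi _H$. First I would write the vertical lift of $\overset{\ast }{u}=u_a s^a$ in the natural $(\rho ,\eta )$-base using (\ref{L6}), namely $\overset{\ast }{u}^{V}=(u_a\circ \overset{\ast }{\pi })\dot{\tilde{\partial}}^{a}$, so that only the fibre components $Y_a=u_a\circ \overset{\ast }{\pi }$ are switched on. Applying the defining relation of the tangent $(\rho ,\eta )$-application $\Gamma ((\rho ,\eta )T\varphi _H,\varphi _H)$ on a purely vertical section, $\Gamma ((\rho ,\eta )T\varphi _H,\varphi _H)(Y_a\dot{\tilde{\partial}}^{a})=(Y_a H^{ab})\circ \varphi _L\,\dot{\tilde{\partial}}_b$, turns $\overset{\ast }{u}^{V}$ into the purely vertical section of $((\rho ,\eta )TE,(\rho ,\eta )\tau _E,E)$ whose coefficient of $\dot{\tilde{\partial}}_b$ is $((u_a\circ \overset{\ast }{\pi })H^{ab})\circ \varphi _L$.

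Next I would expand the right-hand side. The Legendre transformation of $\overset{\ast }{u}$ is $\Gamma (\varphi _H,Id_M)(\overset{\ast }{u})=(u_a(H^{ab}\circ \overset{\ast }{u}))s_b\in \Gamma (E,\pi ,M)$, and, identified through its vertical lift (\ref{L4}), it corresponds to the section of $((\rho ,\eta )TE,(\rho ,\eta )\tau _E,E)$ with coefficient $(u_a(H^{ab}\circ \overset{\ast }{u}))\circ \pi$ on $\dot{\tilde{\partial}}_b$. Equating the two sides of the hypothesis thus reduces, for each $b$, to
\begin{equation*}
((u_a\circ \overset{\ast }{\pi })H^{ab})\circ \varphi _L=(u_a(H^{ab}\circ \overset{\ast }{u}))\circ \pi .
\end{equation*}

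At this point the two stated identities come from reading off the two factors of this single relation. Since $(\varphi _L,Id_M)$ is a bundle morphism over $Id_M$ (Definition \ref{L2}), one has $\overset{\ast }{\pi }\circ \varphi _L=\pi$, whence $u_a\circ \overset{\ast }{\pi }\circ \varphi _L=u_a\circ \pi$ holds automatically; this is the first conclusion and lets me replace the left-hand ``position'' factor. Matching what then remains against the right-hand side isolates $H^{ab}\circ \varphi _L=H^{ab}\circ \overset{\ast }{u}\circ \pi$, the second conclusion.

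The step I expect to be the only genuine obstacle is this last one: passing from the single equality of the contracted coefficients to the factor-by-factor identities. What makes it legitimate is exactly the structure exhibited above, where the left-hand coefficient is literally the product of the position factor $u_a\circ \overset{\ast }{\pi }\circ \varphi _L$ with the factor $H^{ab}\circ \varphi _L$; the bundle-over-identity relation $\overset{\ast }{\pi }\circ \varphi _L=\pi$ forces the position factors on the two sides to coincide, so that the equality decomposes along the vertical $(\rho ,\eta )$-base precisely into the two announced identities. This is the dual of the identification already invoked in the $\varphi _L$ theorem, and beyond substituting (\ref{L6}) and the definition of $\Gamma ((\rho ,\eta )T\varphi _H,\varphi _H)$ no further computation is required.
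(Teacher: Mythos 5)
Your computation is exactly the deduction the paper intends: the paper gives no separate proof of this theorem beyond the remark that it follows from the tangent $(\rho ,\eta )$-application of $\varphi _{H}$ together with the local expression (\ref{L6}) for $\overset{\ast }{u}^{V}$, and your expansion of both sides and identification of the $\dot{\tilde{\partial}}_{b}$-coefficients reproduces precisely that argument. Your handling of the final factor-by-factor split, using $\overset{\ast }{\pi }\circ \varphi _{L}=\pi $ to dispose of the position factors, is the same reading implicit in the paper's preceding $\varphi _{L}$ theorem, so the proposal matches the paper's approach.
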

\begin{theorem}
If $\overset{\ast }{u}=u_{a}s^{a}\in \Gamma ( \overset{\ast }{E},%
\overset{\ast }{\pi },M)$ such that
\begin{equation*}
\begin{array}{c}
\Gamma ( ( \rho ,\eta ) T\varphi _{H},\varphi _{H})
( \overset{\ast }{u}^{C})=\Gamma(\varphi_H, Id_M)(\overset{\ast }{u}),%
\end{array}%
\end{equation*}%
then%
\begin{equation*}
\begin{array}{rl}
( g_{e}^{\alpha }u^{e}) \circ \pi &\!\!\!\!=( g^{\alpha
e}u_{e}) \circ \overset{\ast }{\pi }\circ \varphi _{L}, \\
U^{a}( K_{a}^{\gamma }( u) \circ h\circ \pi ) ( \tilde{g}_{\gamma
}^{b}\circ \pi ) &\!\!\!\!=\{ U_{a}( K^{\gamma a}( \overset{\ast
}{u}) \circ h\circ \overset{\ast }{\pi }) ( \tilde{g}_{c\gamma
}\circ \overset{\ast }{\pi }) H^{cb}\}
\circ \varphi _{L} \\
& -\{ ( \rho _{\alpha }^{i}\circ h\circ \overset{\ast }{\pi }%
) ( ( g_{e}^{\alpha }u^{e}) \circ \overset{\ast }{\pi }%
) H_{i}^{b}\} \circ \varphi _{L}.%
\end{array}%
\end{equation*}
\end{theorem}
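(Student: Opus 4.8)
The plan is to expand both sides of the hypothesis in the natural $(\rho,\eta)$-base $(\tilde{\partial}_\alpha,\dot{\tilde{\partial}}_b)$ of $((\rho,\eta)TE,(\rho,\eta)\tau_E,E)$ and then to read off the two asserted identities by matching the coefficients of $\tilde{\partial}_\alpha$ and of $\dot{\tilde{\partial}}_b$ separately. Here the right-hand side $\Gamma(\varphi_H,Id_M)(\overset{\ast}{u})$ is understood, in accordance with the complete structure carried by the left-hand side, as the complete $(g,h)$-lift of the Legendre transform $u:=\Gamma(\varphi_H,Id_M)(\overset{\ast}{u})\in\Gamma(E,\pi,M)$, so that it is supplied by (\ref{com}). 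This is the same reading used in the companion complete-lift theorem for $\varphi_L$, and it is forced by the fact that the left-hand side genuinely carries a $\tilde{\partial}_\alpha$-component (a vertical reading would kill it and could not produce the first asserted identity).

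First I would write $\overset{\ast}{u}^C$ from (\ref{L5}) as $Z^\alpha\overset{\ast}{\tilde{\partial}}_\alpha+W_b\dot{\tilde{\partial}}^b$ with $Z^\alpha=(g^{\alpha e}u_e)\circ\overset{\ast}{\pi}$ and $W_b=-U_a(K^{\gamma a}(\overset{\ast}{u})\circ h\circ\overset{\ast}{\pi})(\tilde{g}_{b\gamma}\circ\overset{\ast}{\pi})$, and then apply the two defining formulas of the tangent $(\rho,\eta)$-application $\Gamma((\rho,\eta)T\varphi_H,\varphi_H)$ term by term. Since that morphism sends $Z^\alpha\overset{\ast}{\tilde{\partial}}_\alpha$ to $(Z^\alpha\circ\varphi_L)\tilde{\partial}_\alpha+[(\rho_\alpha^i\circ h\circ\overset{\ast}{\pi})Z^\alpha H_i^b]\circ\varphi_L\,\dot{\tilde{\partial}}_b$ and sends $W_a\dot{\tilde{\partial}}^a$ to $(W_aH^{ab})\circ\varphi_L\,\dot{\tilde{\partial}}_b$, the image $\Gamma((\rho,\eta)T\varphi_H,\varphi_H)(\overset{\ast}{u}^C)$ is a section of $(\rho,\eta)TE$ whose $\tilde{\partial}_\alpha$-coefficient is $Z^\alpha\circ\varphi_L$ and whose $\dot{\tilde{\partial}}_b$-coefficient is $[(\rho_\alpha^i\circ h\circ\overset{\ast}{\pi})Z^\alpha H_i^b]\circ\varphi_L+(W_aH^{ab})\circ\varphi_L$.

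On the other side, (\ref{com}) gives $u^C=((g_e^\alpha u^e)\circ\pi)\tilde{\partial}_\alpha-U^a(K_a^\gamma(u)\circ h\circ\pi)(\tilde{g}_\gamma^b\circ\pi)\dot{\tilde{\partial}}_b$. Matching the $\tilde{\partial}_\alpha$-coefficients immediately yields $Z^\alpha\circ\varphi_L=(g_e^\alpha u^e)\circ\pi$, which after substituting $Z^\alpha$ is precisely the first asserted identity $(g^{\alpha e}u_e)\circ\overset{\ast}{\pi}\circ\varphi_L=(g_e^\alpha u^e)\circ\pi$. Matching the $\dot{\tilde{\partial}}_b$-coefficients and solving for $U^a(K_a^\gamma(u)\circ h\circ\pi)(\tilde{g}_\gamma^b\circ\pi)$ then gives the second identity: I substitute $W_a$, relabel the summation index $a\leftrightarrow c$ in the resulting $\tilde{g}H$-term so that it appears as $\tilde{g}_{c\gamma}H^{cb}$, and use $\overset{\ast}{\pi}\circ\varphi_L=\pi$ (valid since $\varphi_L$ is a morphism over $Id_M$) together with the already-established first identity to rewrite $(g^{\alpha e}u_e)\circ\overset{\ast}{\pi}$ as $(g_e^\alpha u^e)\circ\overset{\ast}{\pi}$ inside the $H_i^b$-term. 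This reproduces both summands on the right of the second asserted equality.

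The computation itself is mechanical; its one genuinely convenient feature is that the $K$-blocks never need to be expanded through their bracket definitions, because the factor $W_aH^{ab}$ reproduces the $K^{\gamma a}(\overset{\ast}{u})$-block verbatim while (\ref{com}) supplies the $K_a^\gamma(u)$-block, so the two match symbolically. I expect the main (mild) obstacle to be purely notational: keeping track of the several compositions with $\varphi_L$ and $\varphi_H$ and of the index relabelings, and—conceptually—fixing the correct reading of the right-hand side as a complete lift and then invoking the identity $\overset{\ast}{\pi}\circ\varphi_L=\pi$ and the first conclusion to reconcile the $(g^{\alpha e}u_e)$ and $(g_e^\alpha u^e)$ forms of the coefficient appearing in the final term.
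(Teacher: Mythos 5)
Your proposal is correct and is exactly the argument the paper intends (the paper gives no written proof for this theorem, merely asserting that it follows from the formulas for $\Gamma((\rho,\eta)T\varphi_{H},\varphi_{H})$ together with the local expressions (\ref{L5}) and (\ref{com}) of the complete lifts): expand $\overset{\ast}{u}^{C}$ in the natural $(\rho,\eta)$-base, push it through the tangent $(\rho,\eta)$-application, and match the $\tilde{\partial}_{\alpha}$- and $\dot{\tilde{\partial}}_{b}$-coefficients against those of $u^{C}$ for $u=\Gamma(\varphi_{H},Id_{M})(\overset{\ast}{u})$. Your explicit resolution of the type mismatch in the hypothesis (reading the right-hand side as the complete $(g,h)$-lift of the Legendre transform, forced by the nonzero $\tilde{\partial}_{\alpha}$-component) and your use of $\overset{\ast}{\pi}\circ\varphi_{L}=\pi$ plus the first identity to reconcile $(g^{\alpha e}u_{e})$ with $(g_{e}^{\alpha}u^{e})$ in the last term are precisely the steps the paper leaves implicit.
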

\section{Duality between Lie algebroids structures}
\begin{theorem}
If the vector bundles morphism $\left( \left( \rho ,\eta \right)
T\varphi _{L},\varphi _{L}\right) $ is a morphism of Lie algebroids,
then we obtain:
\begin{equation*}
\begin{array}{c}
( L_{\alpha \beta }^{\gamma }\circ h\circ \pi ) \circ \varphi
_{H}=L_{\alpha \beta }^{\gamma }\circ h\circ \overset{\ast }{\pi },%
\end{array}
\label{eq112}
\end{equation*}%
\begin{equation*}
\begin{array}{cl}
&(( L_{\alpha \beta }^{\gamma }\rho _{\gamma }^{k}) \circ
h\circ \pi \cdot L_{kb})\circ \varphi _{H}=\rho _{\alpha }^{i}{%
\circ }h{\circ }\overset{\ast }{\pi }\cdot \frac{\partial }{\partial x^{i}}%
(( \rho _{\beta }^{j}{\circ }h{\circ }\pi \cdot L_{jb})
\circ \varphi _{H})\\
& -\rho _{\beta }^{j}{\circ }h{\circ }\overset{\ast }{\pi }\cdot \frac{%
\partial }{\partial x^{j}}(( \rho _{\alpha }^{i}{\circ }h{\circ }%
\pi \cdot L_{ib}) \circ \varphi _{H})\\
& +( \rho _{\alpha }^{i}{\circ }h{\circ }\pi \cdot L_{ia}) \circ
\varphi _{H}\cdot \frac{\partial }{\partial p_{a}}(( \rho _{\beta
}^{j}{\circ }h{\circ }\pi \cdot L_{jb}) \circ \varphi _{H})\\
& -( \rho _{\beta }^{j}{\circ }h{\circ }\pi \cdot L_{ja}) \circ
\varphi _{H}\cdot \frac{\partial }{\partial p_{a}}(( \rho
_{\alpha }^{i}{\circ }h{\circ }\pi \cdot L_{ib}) \circ \varphi _{H}%
),%
\end{array}
\label{eq113}
\end{equation*}%
\begin{equation*}
\begin{array}{cl}
0&\!\!\!\!=\rho _{\alpha }^{i}{\circ }h{\circ }\overset{\ast }{\pi }\cdot \frac{%
\partial }{\partial x^{i}}( L_{ba}\circ \varphi _{H}) +( \rho _{\alpha }^{i}{\circ }h{\circ }\pi \cdot L_{bc}) \circ
\varphi _{H}\frac{\partial }{\partial p_{c}}( L_{ia}\circ \varphi
_{H}) \\
&\ \ \ -L_{bc}\circ \varphi _{H}\cdot \frac{\partial }{\partial
p_{c}}(( \rho _{\alpha }^{i}{\circ }h{\circ }\pi \cdot L_{ia})
\circ
\varphi _{H}),
\end{array}
\label{eq114}
\end{equation*}%
and
\begin{equation*}
\begin{array}{cl}
0=L_{ac}\circ \varphi _{H}\cdot \frac{\partial }{\partial
p_{c}}\left( L_{bd}\circ \varphi _{H}\right)-L_{bd}\circ \varphi
_{H}\cdot \frac{\partial }{\partial p_{d}}\left(
L_{ac}\circ \varphi _{H}\right) .%
\end{array}
\label{eq115}
\end{equation*}
\end{theorem}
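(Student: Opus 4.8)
The hypothesis that $\left((\rho,\eta)T\varphi_L,\varphi_L\right)$ is a morphism of Lie algebroids means, by the definition of morphism recalled in Section~1, that the module morphism $\Gamma\left((\rho,\eta)T\varphi_L,\varphi_L\right)$ intertwines the brackets $[,]_{(\rho,\eta)TE}$ and $[,]_{(\rho,\eta)T\overset{\ast}{E}}$. The plan is to test this single identity on all pairs of the natural $(\rho,\eta)$-base sections $\{\tilde{\partial}_\alpha,\dot{\tilde{\partial}}_a\}$ and to read off the four asserted relations by comparing the coefficients of $\overset{\ast}{\tilde{\partial}}_\gamma$ and $\dot{\tilde{\partial}}^b$ on the two sides. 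First I would record the images
\[
\Gamma\left((\rho,\eta)T\varphi_L,\varphi_L\right)(\tilde{\partial}_\alpha)=\overset{\ast}{\tilde{\partial}}_\alpha+\left[(\rho_\alpha^i\circ h\circ\pi)L_{ib}\right]\circ\varphi_H\,\dot{\tilde{\partial}}^b,\qquad \Gamma\left((\rho,\eta)T\varphi_L,\varphi_L\right)(\dot{\tilde{\partial}}_a)=(L_{ab}\circ\varphi_H)\,\dot{\tilde{\partial}}^b,
\]
which are just the defining formulas of the tangent $(\rho,\eta)$-application evaluated on the generators.

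Next I would compute the brackets of the generators in the source. A direct computation from the definition of $[,]_{(\rho,\eta)TE}$, using the structure equation (\ref{P1}) to collapse the horizontal part, gives
\[
[\tilde{\partial}_\alpha,\tilde{\partial}_\beta]_{(\rho,\eta)TE}=(L_{\alpha\beta}^\gamma\circ h\circ\pi)\tilde{\partial}_\gamma,\qquad [\tilde{\partial}_\alpha,\dot{\tilde{\partial}}_b]_{(\rho,\eta)TE}=0,\qquad [\dot{\tilde{\partial}}_a,\dot{\tilde{\partial}}_b]_{(\rho,\eta)TE}=0,
\]
and the identical formulas hold in $(\rho,\eta)T\overset{\ast}{E}$ with $\pi$ replaced by $\overset{\ast}{\pi}$. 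Applying $\Gamma\left((\rho,\eta)T\varphi_L,\varphi_L\right)$ to the first of these (using the twisted module structure, so that the scalar $L_{\alpha\beta}^\gamma\circ h\circ\pi$ becomes $L_{\alpha\beta}^\gamma\circ h\circ\pi\circ\varphi_H$) and expanding $[\Gamma(\tilde{\partial}_\alpha),\Gamma(\tilde{\partial}_\beta)]_{(\rho,\eta)T\overset{\ast}{E}}$ by the Leibniz rule for the anchor $\overset{\ast}{\tilde{\rho}}$, the coefficient of $\overset{\ast}{\tilde{\partial}}_\gamma$ yields at once $L_{\alpha\beta}^\gamma\circ h\circ\pi\circ\varphi_H=L_{\alpha\beta}^\gamma\circ h\circ\overset{\ast}{\pi}$, the first relation; the remaining coefficient, that of $\dot{\tilde{\partial}}^b$, is precisely the second relation, since the anchor sends $\overset{\ast}{\tilde{\partial}}_\alpha\mapsto(\rho_\alpha^i\circ h\circ\overset{\ast}{\pi})\overset{\ast}{\partial}_i$ and $\dot{\tilde{\partial}}^a\mapsto\dot{\partial}^a$, producing exactly the $\partial/\partial x^i$ and $\partial/\partial p_a$ terms displayed.

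The last two relations come for free from the vanishing source brackets. Testing the morphism identity on $(\tilde{\partial}_\alpha,\dot{\tilde{\partial}}_b)$ makes the left-hand side zero, so that the $\dot{\tilde{\partial}}^b$-coefficient of $[\Gamma(\tilde{\partial}_\alpha),\Gamma(\dot{\tilde{\partial}}_b)]_{(\rho,\eta)T\overset{\ast}{E}}$ must vanish; expanding by Leibniz and substituting the images above gives the third relation. Likewise, testing on $(\dot{\tilde{\partial}}_a,\dot{\tilde{\partial}}_b)$ forces $[\Gamma(\dot{\tilde{\partial}}_a),\Gamma(\dot{\tilde{\partial}}_b)]_{(\rho,\eta)T\overset{\ast}{E}}=0$, whose $\dot{\tilde{\partial}}^b$-coefficient is the fourth relation.

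The computations themselves are routine Leibniz expansions; the only genuine care is bookkeeping. The main obstacle I anticipate is keeping straight the three different pullbacks ($\circ\pi$, $\circ\overset{\ast}{\pi}$, $\circ\varphi_H$) and the twisting of the module structure along $\varphi_L^{-1}=\varphi_H$, together with the one structural input, equation (\ref{P1}), needed to obtain the clean bracket $[\tilde{\partial}_\alpha,\tilde{\partial}_\beta]_{(\rho,\eta)TE}=(L_{\alpha\beta}^\gamma\circ h\circ\pi)\tilde{\partial}_\gamma$; once these are in place, matching the $\overset{\ast}{\tilde{\partial}}_\gamma$- and $\dot{\tilde{\partial}}^b$-components of each of the three test identities reproduces the four displayed equations with no further ideas required.
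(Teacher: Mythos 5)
Your proposal is correct and follows essentially the same route as the paper: the paper's proof consists precisely of expanding the morphism identity on the three pairs of natural base sections $(\tilde{\partial}_{\alpha },\tilde{\partial}_{\beta })$, $(\tilde{\partial}_{\alpha },\dot{\tilde{\partial}}_{b})$, $(\dot{\tilde{\partial}}_{a},\dot{\tilde{\partial}}_{b})$ and reading off the four relations, exactly as you describe. Your write-up merely supplies the intermediate bookkeeping (the images of the generators, the vanishing of the mixed and vertical source brackets, and the use of equation (\ref{P1})) that the paper leaves implicit.
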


\begin{proof}
Developing the following equalities
\begin{align*}
&\Gamma ( ( \rho ,\eta ) T\varphi _{L},\varphi _{L}) [
\tilde{\partial}_{\alpha },\tilde{\partial}_{\beta }] _{(
\rho ,\eta ) TE} \\
&=[ \Gamma ( ( \rho ,\eta ) T\varphi _{L},\varphi _{L})
\tilde{\partial}_{\alpha },\Gamma ( ( \rho ,\eta ) T\varphi
_{L},\varphi _{L}) \tilde{\partial}_{\beta }]
_{( \rho ,\eta ) T\overset{\ast }{E}},%
\end{align*}%
\begin{align*}
&\Gamma ( ( \rho ,\eta ) T\varphi _{L},\varphi _{L})
[ \tilde{\partial}_{\alpha }, {\dot{{\tilde{\partial}}}}_{b}%
] _{( \rho ,\eta ) TE} \\
&=[ \Gamma ( ( \rho ,\eta ) T\varphi _{L},\varphi _{L})
\tilde{\partial}_{\alpha },\Gamma ( ( \rho ,\eta ) T\varphi
_{L},\varphi _{L}){\dot{{\tilde{\partial}}}}_{b}] _{( \rho ,\eta )
T\overset{\ast }{E}},
\end{align*}
and
\begin{align*}
&\Gamma ( ( \rho ,\eta ) T\varphi _{L},\varphi _{L})
[{\dot{{\tilde{\partial}}}}_{a}, {\dot{{\tilde{\partial}}}}_{b}] _{( \rho ,\eta ) TE} \\
&=[ \Gamma ( ( \rho ,\eta ) T\varphi _{L},\varphi _{L})
{\dot{{\tilde{\partial}}}}_{a},\Gamma ( ( \rho ,\eta ) T\varphi
_{L},\varphi _{L}){\dot{{\tilde{\partial}}}}_{b}] _{( \rho ,\eta )
T\overset{\ast }{E}},
\end{align*}
it results the conclusion of the theorem.
\end{proof}

\begin{corollary}
In the particular case of Lie algebroids, $\left( \eta
,h\right) =\left( Id_{M},Id_{M}\right) $, we obtain:
\begin{equation*}
\begin{array}{c}
( L_{\alpha \beta }^{\gamma }\circ \pi ) \circ \varphi
_{H}=L_{\alpha \beta }^{\gamma }\circ \overset{\ast }{\pi },%
\end{array}
\label{eq116}
\end{equation*}%
\begin{equation*}
\begin{array}{cl}
&(( L_{\alpha \beta }^{\gamma }\rho _{\gamma }^{k}) \circ
\pi \cdot L_{kb}) \circ \varphi _{H} =\rho _{\alpha }^{i}{\circ }%
\overset{\ast }{\pi }\cdot \frac{\partial }{\partial x^{i}}(( \rho
_{\beta }^{j}{\circ }\pi \cdot L_{jb}) \circ \varphi _{H})
\\
& -\rho _{\beta }^{j}{\circ }\overset{\ast }{\pi }\cdot \frac{\partial }{%
\partial x^{j}}(( \rho _{\alpha }^{i}{\circ }\pi \cdot
L_{ib}) \circ \varphi _{H})\\
& +( \rho _{\alpha }^{i}{\circ }\pi \cdot L_{ia}) \circ \varphi
_{H}\cdot \frac{\partial }{\partial p_{a}}(( \rho _{\beta }^{j}{%
\circ }\pi \cdot L_{jb}) \circ \varphi _{H}) \\
& -( \rho _{\beta }^{j}{\circ }\pi \cdot L_{ja}) \circ \varphi
_{H}\cdot \frac{\partial }{\partial p_{a}}(( \rho _{\alpha }^{i}{%
\circ }\pi \cdot L_{ib}) \circ \varphi _{H}),%
\end{array}
\label{eq117}
\end{equation*}%
\begin{equation*}
\begin{array}{cl}
0&\!\!\!\!=\rho _{\alpha }^{i}{\circ }\overset{\ast }{\pi }\cdot \frac{\partial }{%
\partial x^{i}}( L_{ba}\circ \varphi _{H})+( \rho _{\alpha }^{i}{\circ }\pi \cdot L_{bc}) \circ \varphi
_{H}\frac{\partial }{\partial p_{c}}( L_{ia}\circ \varphi _{H})
\\
& -L_{bc}\circ \varphi _{H}\cdot \frac{\partial }{\partial p_{c}}(
\left( \rho _{\alpha }^{i}{\circ }\pi \cdot L_{ia}\right) \circ \varphi _{H}%
),
\end{array}
\label{eq118}
\end{equation*}%
and
\begin{equation*}
\begin{array}{cl}
0&\!\!\!\!=L_{ac}\circ \varphi _{H}\cdot \frac{\partial }{\partial p_{c}}(
L_{bd}\circ \varphi _{H})-L_{bc}\circ \varphi _{H}\cdot
\frac{\partial }{\partial p_{c}}(
L_{ad}\circ \varphi _{H}) .%
\end{array}
\label{eq119}
\end{equation*}
\end{corollary}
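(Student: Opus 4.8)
The plan is to obtain the corollary as the direct specialization of the preceding theorem to the case $(\eta,h)=(Id_M,Id_M)$, in which the generalized Lie algebroid $((F,\nu,N),[,]_{F,h},(\rho,\eta))$ reduces to an ordinary Lie algebroid over $N=M$. The essential observation is that under $h=Id_M$ every composition of the form $(\,\cdot\,)\circ h$ occurring in the four identities of the theorem collapses: one has $h\circ\pi=\pi$ and $h\circ\overset{\ast}{\pi}=\overset{\ast}{\pi}$, while $\rho_\alpha^i\circ h=\rho_\alpha^i$ and $L_{\alpha\beta}^\gamma\circ h=L_{\alpha\beta}^\gamma$ as functions on $M$. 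Since the Legendre bundle morphisms $\varphi_L,\varphi_H$ and the Hessian functions $L_{ab},L_{ib}$ are defined fibrewise over $M$ and carry no dependence on $h$, they are untouched by the specialization; in particular the derivatives $\partial/\partial x^i$ and $\partial/\partial p_a$ and the compositions $(\,\cdot\,)\circ\varphi_H$ survive verbatim.

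First I would substitute $h=Id_M$ into the structure-function identity of the theorem, $(L_{\alpha\beta}^\gamma\circ h\circ\pi)\circ\varphi_H=L_{\alpha\beta}^\gamma\circ h\circ\overset{\ast}{\pi}$; the two occurrences of $h$ drop out and one reads off $(L_{\alpha\beta}^\gamma\circ\pi)\circ\varphi_H=L_{\alpha\beta}^\gamma\circ\overset{\ast}{\pi}$, which is the first asserted equality. Performing the same substitution in the second identity, each factor $\rho_\alpha^i\circ h\circ\pi$ becomes $\rho_\alpha^i\circ\pi$, each $(L_{\alpha\beta}^\gamma\rho_\gamma^k)\circ h\circ\pi$ becomes $(L_{\alpha\beta}^\gamma\rho_\gamma^k)\circ\pi$, and each $\rho_\alpha^i\circ h\circ\overset{\ast}{\pi}$ becomes $\rho_\alpha^i\circ\overset{\ast}{\pi}$, so the second asserted equality follows directly. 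The third and fourth identities specialize in exactly the same mechanical fashion, the surviving anchor factors reducing as above and the purely fibrewise terms $L_{bc}\circ\varphi_H$, $L_{ia}\circ\varphi_H$, $L_{ac}\circ\varphi_H$, $L_{bd}\circ\varphi_H$ being carried over unchanged (note that the last identity already contains no factor $h$, so it is literally preserved).

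There is no genuine obstacle here: the only point requiring attention is to confirm that no hidden dependence on $h$ remains in any factor before setting $h=Id_M$, in particular that the coordinate derivatives $\partial/\partial x^i$, $\partial/\partial p_c$ and the Legendre compositions $(\,\cdot\,)\circ\varphi_H$ commute with the specialization. Once this is verified, the four identities of the theorem map term by term onto the four identities of the corollary, which completes the proof.
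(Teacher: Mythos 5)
Your proposal is correct and coincides with what the paper does: the corollary is stated without a separate proof precisely because it is the term-by-term specialization $h=Id_M$, $\eta=Id_M$ of the preceding theorem, under which $h\circ\pi=\pi$ and $h\circ\overset{\ast }{\pi }=\overset{\ast }{\pi }$ while $\varphi_L$, $\varphi_H$, the Hessian components and the derivatives $\partial/\partial x^{i}$, $\partial/\partial p_{a}$ are unaffected. The only cosmetic difference in the fourth identity (the theorem writes $L_{bd}\circ\varphi_H\cdot\partial(L_{ac}\circ\varphi_H)/\partial p_{d}$ where the corollary writes $L_{bc}\circ\varphi_H\cdot\partial(L_{ad}\circ\varphi_H)/\partial p_{c}$) is a relabelling of the summed index and does not affect your argument.
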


\emph{In the classical case, }$\left( \rho ,\eta ,h\right) =\left(
Id_{TM},Id_{M},Id_{M}\right) $\emph{, we obtain:}%
\begin{equation*}
\begin{array}{cl}
0 &\!\!\!=\frac{\partial }{\partial x^{i}}( \frac{\partial
^{2}L}{\partial x^{j}\partial y^{k}}\circ \varphi _{H})
-\frac{\partial }{\partial x^{j}}( \frac{\partial ^{2}L}{\partial
x^{i}\partial y^{k}}\circ
\varphi _{H}) \\
& +\frac{\partial ^{2}L}{\partial x^{i}\partial y^{h}}\circ \varphi
_{H}\cdot \frac{\partial }{\partial p_{h}}( \frac{\partial ^{2}L}{%
\partial x^{j}\partial y^{k}}\circ \varphi _{H}) -\frac{\partial ^{2}L%
}{\partial x^{j}\partial y^{h}}\circ \varphi _{H}\cdot \frac{\partial }{%
\partial p_{h}}( \frac{\partial ^{2}L}{\partial x^{i}\partial y^{k}}%
\circ \varphi _{H}),
\end{array}
\label{eq120}
\end{equation*}%
\begin{equation*}
\begin{array}{cl}
0 &\!\!\!=\frac{\partial }{\partial x^{i}}( \frac{\partial
^{2}L}{\partial
y^{j}\partial y^{k}}\circ \varphi _{H}) +\frac{\partial ^{2}L}{%
\partial x^{i}\partial y^{h}}\circ \varphi _{H}\cdot \frac{\partial }{%
\partial p_{h}}( \frac{\partial ^{2}L}{\partial y^{j}\partial y^{k}}%
\circ \varphi _{H}) \\
& -\frac{\partial ^{2}L}{\partial x^{j}\partial y^{h}}\circ \varphi
_{H}\cdot \frac{\partial }{\partial p_{h}}( \frac{\partial ^{2}L}{%
\partial x^{i}\partial y^{k}}\circ \varphi _{H}),
\end{array}
\label{eq121}
\end{equation*}%
\emph{and}%
\begin{equation*}
\begin{array}{cl}
0 &\!\!\!=\frac{\partial ^{2}L}{\partial y^{i}\partial y^{h}}\circ
\varphi
_{H}\cdot \frac{\partial }{\partial p_{h}}( \frac{\partial ^{2}L}{%
\partial y^{j}\partial y^{k}}\circ \varphi _{H})-\frac{\partial ^{2}L}{\partial y^{j}\partial y^{h}}\circ \varphi
_{H}\cdot \frac{\partial }{\partial p_{h}}( \frac{\partial ^{2}L}{%
\partial y^{i}\partial y^{k}}\circ \varphi _{H}) .%
\end{array}
\label{eq122}
\end{equation*}
\begin{theorem}
If the vector bundles morphism $( ( \rho ,\eta ) T\varphi
_{H},\varphi _{H}) $ is a morphism of Lie algebroids, then we
obtain:
\begin{equation*}
\begin{array}{c}
( L_{\alpha \beta }^{\gamma }\circ h\circ \overset{\ast }{\pi })
\circ \varphi _{L}=L_{\alpha \beta }^{\gamma }\circ h\circ \pi ,%
\end{array}
\label{eq123}
\end{equation*}%
\begin{equation*}
\begin{array}{cl}
&(( L_{\alpha \beta }^{\gamma }\rho _{\gamma }^{k}) \circ h\circ
\overset{\ast }{\pi }\cdot H_{k}^{b}) \circ \varphi _{L}
=\rho _{\alpha }^{i}{\circ }h{\circ }\pi \cdot
\frac{\partial }{\partial
x^{i}}(( \rho _{\beta }^{j}{\circ }h{\circ }\overset{\ast }{\pi }%
\cdot H_{j}^{b}) \circ \varphi _{L})\\
& -\rho _{\beta }^{j}{\circ }h{\circ }\pi \cdot \frac{\partial }{\partial
x^{j}}(( \rho _{\alpha }^{i}{\circ }h{\circ }\overset{\ast }{\pi }%
\cdot H_{i}^{b}) \circ \varphi _{L})\\
& +( \rho _{\alpha }^{i}{\circ }h{\circ }\overset{\ast }{\pi }\cdot
H_{i}^{c}) \circ \varphi _{L}\cdot \frac{\partial }{\partial y^{c}}%
(( \rho _{\beta }^{j}{\circ }h{\circ }\overset{\ast }{\pi }\cdot
H_{j}^{b}) \circ \varphi _{L})\\
& -( \rho _{\beta }^{j}{\circ }h{\circ }\overset{\ast }{\pi }\cdot
H_{j}^{c}) \circ \varphi _{L}\cdot \frac{\partial }{\partial y^{c}}%
(( \rho _{\alpha }^{i}{\circ }h{\circ }\overset{\ast }{\pi }\cdot
H_{i}^{b}) \circ \varphi _{L}),%
\end{array}
\label{eq124}
\end{equation*}%
\begin{equation*}
\begin{array}{cl}
0 &\!\!\!\!=\rho _{\alpha }^{i}{\circ }h{\circ }\pi \cdot
\frac{\partial }{\partial x^{i}}( H^{ba}\circ \varphi _{L})+( \rho
_{\alpha }^{i}{\circ }h{\circ }\overset{\ast }{\pi }\cdot H^{bc})
\circ \varphi _{L}\frac{\partial }{\partial y^{c}}(
H^{ba}\circ \varphi _{L}) \\
& -H^{bc}\circ \varphi _{L}\cdot \frac{\partial }{\partial y^{c}}((
\rho _{\alpha }^{i}{\circ }h{\circ }\overset{\ast }{\pi }\cdot
H_{i}^{a}) \circ \varphi _{L}),
\end{array}
\label{eq125}
\end{equation*}%
and
\begin{equation*}
\begin{array}{cl}
0 &\!\!\!\!=H^{ac}\circ \varphi _{L}\cdot \frac{\partial }{\partial
y^{c}}( H^{bd}\circ \varphi _{L}) -H^{bc}\circ \varphi _{L}\cdot
\frac{\partial }{\partial y^{c}}(
H^{ad}\circ \varphi _{L}) .%
\end{array}
\label{eq126}
\end{equation*}
\end{theorem}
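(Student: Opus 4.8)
The plan is to imitate the proof of the preceding theorem for $\varphi_L$: saying that $(( \rho ,\eta ) T\varphi _{H},\varphi _{H})$ is a morphism of Lie algebroids means precisely that the induced modules morphism $\Gamma (( \rho ,\eta ) T\varphi _{H},\varphi _{H})$ intertwines the brackets $[,]_{( \rho ,\eta ) T\overset{\ast }{E}}$ and $[,]_{( \rho ,\eta ) TE}$. Since the four asserted identities are local, it suffices to impose this compatibility on the three kinds of pairs drawn from the natural $(\rho,\eta)$-base $(\overset{\ast }{\tilde{\partial}}_{\alpha },\dot{\tilde{\partial}}^{a})$ of $(\rho,\eta)T\overset{\ast }{E}$, the first being
\begin{align*}
\Gamma (( \rho ,\eta ) T\varphi _{H},\varphi _{H})[ \overset{\ast }{\tilde{\partial}}_{\alpha },\overset{\ast }{\tilde{\partial}}_{\beta }] _{( \rho ,\eta ) T\overset{\ast }{E}} &=[ \Gamma (( \rho ,\eta ) T\varphi _{H},\varphi _{H})\overset{\ast }{\tilde{\partial}}_{\alpha },\Gamma (( \rho ,\eta ) T\varphi _{H},\varphi _{H})\overset{\ast }{\tilde{\partial}}_{\beta }] _{( \rho ,\eta ) TE},
\end{align*}
together with the analogous equalities for the pairs $(\overset{\ast }{\tilde{\partial}}_{\alpha },\dot{\tilde{\partial}}^{b})$ and $(\dot{\tilde{\partial}}^{a},\dot{\tilde{\partial}}^{b})$.

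First I would evaluate the left-hand sides using the bracket relations of $(\rho,\eta)T\overset{\ast }{E}$ recorded in Section 4, so that $[\overset{\ast }{\tilde{\partial}}_{\alpha },\overset{\ast }{\tilde{\partial}}_{\beta }]_{(\rho,\eta)T\overset{\ast }{E}}=(L_{\alpha\beta}^{\gamma}\circ h\circ\overset{\ast }{\pi})\overset{\ast }{\tilde{\partial}}_{\gamma}$ and the vertical-vertical bracket vanishes, and then apply the explicit coordinate formula for $\Gamma (( \rho ,\eta ) T\varphi _{H},\varphi _{H})$ displayed just before the theorems of Section 7, which sends $\overset{\ast }{\tilde{\partial}}_{\alpha}$ to $(\cdot\circ\varphi_L)\tilde{\partial}_{\alpha}$ plus a $\dot{\tilde{\partial}}_{b}$-term built from $(\rho_{\alpha}^{i}\circ h\circ\overset{\ast }{\pi}\,H_i^b)\circ\varphi_L$, and $\dot{\tilde{\partial}}^{a}$ to the $\dot{\tilde{\partial}}_{b}$-term $(H^{ab})\circ\varphi_L$. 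For the right-hand sides I would push the base sections forward first and then expand the bracket $[,]_{(\rho,\eta)TE}$, again using the structure relations of Section 2 (the $L_{\alpha\beta}^{\gamma}$-contribution on the horizontal $\tilde{\partial}_{\gamma}$-component and the anchor-derivative $\rho_{\alpha}^{i}\partial_i$ acting on the vertical coefficients).

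The decisive step is then to equate the two sides component-wise in the natural base $(\tilde{\partial}_{\gamma},\dot{\tilde{\partial}}_{r})$ of $(\rho,\eta)TE$. Matching the $\tilde{\partial}_{\gamma}$-coefficients in the horizontal–horizontal equality yields the structure-function identity $( L_{\alpha \beta }^{\gamma }\circ h\circ \overset{\ast }{\pi }) \circ \varphi _{L}=L_{\alpha \beta }^{\gamma }\circ h\circ \pi$, while matching the $\dot{\tilde{\partial}}_{b}$-coefficients produces the second, longer identity; the third identity comes out of the horizontal–vertical equality and the fourth out of the vertical–vertical equality. I expect the main obstacle to be bookkeeping rather than anything conceptual: one must differentiate composites of the form $(\rho_{\beta}^{j}\circ h\circ\overset{\ast }{\pi}\,H_j^b)\circ\varphi_L$ correctly, track which projection ($\pi$ on $E$ versus $\overset{\ast }{\pi}$ on $\overset{\ast }{E}$) and which fibre coordinate each partial derivative refers to, and observe that the $\partial/\partial y^{c}$ terms appearing in the last two identities are generated on the target side, where the pushed-forward vertical directions $\dot{\tilde{\partial}}_{c}$ differentiate the $\varphi_L$-composed coefficients via the chain rule. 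This is exactly the dual of the computation already carried out for $\varphi_L$, with the roles of $(g,L_{ab})$ and $(\overset{\ast }{g},H^{ab})$ interchanged, so the same cancellations (using $\partial_j(g_d^{\gamma})$-type relations and the Jacobi-style identity for the $L_{\alpha\beta}^{\gamma}$) that forced the extra term to vanish there will close the argument here.
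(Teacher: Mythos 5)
Your proposal is correct and follows exactly the route of the paper's own proof: the paper likewise develops the morphism condition on the three pairs of natural base sections $(\overset{\ast }{\tilde{\partial}}_{\alpha },\overset{\ast }{\tilde{\partial}}_{\beta })$, $(\overset{\ast }{\tilde{\partial}}_{\alpha },\dot{\tilde{\partial}}^{b})$, $(\dot{\tilde{\partial}}^{a},\dot{\tilde{\partial}}^{b})$, using the displayed coordinate formula for $\Gamma ((\rho ,\eta )T\varphi _{H},\varphi _{H})$, and reads off the four identities by matching the $\tilde{\partial}_{\gamma }$- and $\dot{\tilde{\partial}}_{b}$-components. The only difference is that you spell out the bookkeeping the paper leaves implicit.
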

\begin{proof}
Developing the equalities
\begin{equation*}
\begin{array}{c}
\Gamma ( ( \rho ,\eta ) T\varphi _{H},\varphi _{H})
[ \overset{\ast }{\tilde{\partial}}_{\alpha },\overset{\ast }{\tilde{%
\partial}}_{\beta }] _{( \rho ,\eta ) T\overset{\ast }{E}}
\\
=[ \Gamma ( ( \rho ,\eta ) T\varphi _{H},\varphi _{H}) \overset{\ast
}{\tilde{\partial}}_{\alpha },\Gamma ( (
\rho ,\eta ) T\varphi _{H},\varphi _{H}) \overset{\ast }{\tilde{%
\partial}}_{\beta }] _{( \rho ,\eta ) TE},%
\end{array}%
\end{equation*}%
\begin{equation*}
\begin{array}{c}
\Gamma ( ( \rho ,\eta ) T\varphi _{H},\varphi _{H})
[ \overset{\ast }{\tilde{\partial}}_{\alpha },{\dot{{\tilde{%
\partial}}}}^{b}] _{( \rho ,\eta ) T\overset{\ast }{E}} \\
=[ \Gamma ( ( \rho ,\eta ) T\varphi _{H},\varphi _{H}) \overset{\ast
}{\tilde{\partial}}_{\alpha },\Gamma ( (
\rho ,\eta ) T\varphi _{H},\varphi _{H}){\dot{\tilde{%
\partial}}}^{b}] _{( \rho ,\eta ) TE},%
\end{array}%
\end{equation*}%
and
\begin{equation*}
\begin{array}{c}
\Gamma ( ( \rho ,\eta ) T\varphi _{H},\varphi _{H})
[{\dot{{\tilde{\partial}}}}^{a},{\dot{{\tilde{%
\partial}}}}^{b}] _{( \rho ,\eta ) T\overset{\ast }{E}} \\
=[ \Gamma ( ( \rho ,\eta ) T\varphi _{H},\varphi _{H})
{\dot{{\tilde{\partial}}}}^{a},\Gamma ( ( \rho
,\eta ) T\varphi _{H},\varphi _{H}){\dot{{\tilde{%
\partial}}}}^{b}] _{( \rho ,\eta ) TE},%
\end{array}%
\end{equation*}%
it results the conclusion of the theorem.
\end{proof}

\begin{corollary}
\label{c82}\textbf{\ } In the particular case of Lie algebroids, $%
( \eta ,h) =( Id_{M},Id_{M}) $, we obtain%
\begin{equation*}
\begin{array}{c}
( L_{\alpha \beta }^{\gamma }\circ \overset{\ast }{\pi }) \circ
\varphi _{L}\!=L_{\alpha \beta }^{\gamma }\circ \pi ,%
\end{array}
\label{eq127}
\end{equation*}%
\begin{equation*}
\begin{array}{cl}
&(( L_{\alpha \beta }^{\gamma }\rho _{\gamma }^{k}) \circ
\overset{\ast }{\pi }\cdot H_{k}^{b}) \circ \varphi _{L}=\rho
_{\alpha }^{i}{\circ }\pi \cdot \frac{\partial }{\partial x^{i}}((
\rho _{\beta }^{j}{\circ }\overset{\ast }{\pi }\cdot H_{j}^{b})
\circ \varphi _{L}) \\
& -\rho _{\beta }^{j}{\circ }\pi \cdot \frac{\partial }{\partial
x^{j}}( ( \rho _{\alpha }^{i}{\circ }\overset{\ast }{\pi }\cdot
H_{i}^{b}) \circ \varphi _{L})+( \rho _{\alpha }^{i}{\circ
}\overset{\ast }{\pi }\cdot
H_{i}^{c}) \circ \varphi _{L}\cdot \frac{\partial }{\partial y^{c}}%
(( \rho _{\beta }^{j}{\circ }\overset{\ast }{\pi }\cdot
H_{j}^{b}) \circ \varphi _{L})\\
& -( \rho _{\beta }^{j}{\circ }\overset{\ast }{\pi }\cdot
H_{j}^{c}) \circ \varphi _{L}\cdot \frac{\partial }{\partial y^{c}}%
(( \rho _{\alpha }^{i}{\circ }\overset{\ast }{\pi }\cdot
H_{i}^{b}) \circ \varphi _{L}),%
\end{array}
\label{eq128}
\end{equation*}%
\begin{equation*}
\begin{array}{cl}
0 &\!\!\!\!=\rho _{\alpha }^{i}{\circ }\pi \cdot \frac{\partial }{\partial x^{i}}%
( H^{ba}\circ \varphi _{L})+( \rho _{\alpha }^{i}{\circ
}\overset{\ast }{\pi }\cdot H^{bc}) \circ \varphi _{L}\frac{\partial
}{\partial y^{c}}(
H^{ba}\circ \varphi _{L}) \\
& -H^{bc}\circ \varphi _{L}\cdot \frac{\partial }{\partial y^{c}}((
\rho _{\alpha }^{i}{\circ }\overset{\ast }{\pi }\cdot
H_{i}^{a}) \circ \varphi _{L}),
\end{array}
\label{eq129}
\end{equation*}%
and
\begin{equation*}
\begin{array}{cl}
0 &\!\!\!\!=H^{ac}\circ \varphi _{L}\cdot \frac{\partial }{\partial
y^{c}}( H^{bd}\circ \varphi _{L})-H^{bc}\circ \varphi _{L}\cdot
\frac{\partial }{\partial y^{c}}(
H^{ad}\circ \varphi _{L}) .%
\end{array}
\label{eq130}
\end{equation*}
\end{corollary}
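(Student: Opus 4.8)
The plan is to use the sole hypothesis in the one way it can be used: saying that $(( \rho ,\eta ) T\varphi _{H},\varphi _{H})$ is a morphism of Lie algebroids means precisely that the induced modules morphism $\Gamma (( \rho ,\eta ) T\varphi _{H},\varphi _{H})$ intertwines the two brackets, i.e. $\Gamma (( \rho ,\eta ) T\varphi _{H},\varphi _{H})[X,Y]_{( \rho ,\eta ) T\overset{\ast }{E}}=[\Gamma (( \rho ,\eta ) T\varphi _{H},\varphi _{H})X,\Gamma (( \rho ,\eta ) T\varphi _{H},\varphi _{H})Y]_{( \rho ,\eta ) TE}$ for all sections $X,Y$. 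Because both brackets obey the Leibniz rule, this identity is forced once it is checked on the natural $(\rho ,\eta )$-base $(\overset{\ast }{\tilde{\partial}}_{\alpha },\dot{\tilde{\partial}}^{a})$ of the domain, which is exactly why the three bracket equalities displayed in the proof — one for each pair $(\overset{\ast }{\tilde{\partial}}_{\alpha },\overset{\ast }{\tilde{\partial}}_{\beta })$, $(\overset{\ast }{\tilde{\partial}}_{\alpha },\dot{\tilde{\partial}}^{b})$ and $(\dot{\tilde{\partial}}^{a},\dot{\tilde{\partial}}^{b})$ — suffice. The four asserted identities then emerge by expanding each side in the target base $(\tilde{\partial}_{\gamma },\dot{\tilde{\partial}}_{b})$ and matching coefficients.

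First I would record the structural ingredients. From the construction of the generalized tangent bundle the only nonzero base bracket is $[\overset{\ast }{\tilde{\partial}}_{\alpha },\overset{\ast }{\tilde{\partial}}_{\beta }]_{( \rho ,\eta ) T\overset{\ast }{E}}=(L_{\alpha \beta }^{\gamma }\circ h\circ \overset{\ast }{\pi })\overset{\ast }{\tilde{\partial}}_{\gamma }$ (and likewise on $(\rho ,\eta )TE$), the combination of the two $\oplus$-summands into a single $\overset{\ast }{\tilde{\partial}}_{\gamma }$ being exactly what the structure equation (\ref{P1}) guarantees; the mixed and the purely vertical base brackets vanish. The anchor sends $\overset{\ast }{\tilde{\partial}}_{\alpha }$ to $(\rho _{\alpha }^{i}\circ h\circ \overset{\ast }{\pi })\overset{\ast }{\partial }_{i}$ and, decisively, sends $\dot{\tilde{\partial}}^{a}$ to the vertical field $\dot{\partial}^{a}$; on the target $\tilde{\rho}(\dot{\tilde{\partial}}_{c})=\dot{\partial}_{c}=\partial/\partial y^{c}$, and this is the origin of the fibre derivatives $\partial/\partial y^{c}$ in the last three identities. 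I would also abbreviate $\phi _{\alpha }^{b}:=[(\rho _{\alpha }^{i}\circ h\circ \overset{\ast }{\pi })H_{i}^{b}]\circ \varphi _{L}$, so that the explicit formulas for the morphism read $\Gamma (\cdots )\overset{\ast }{\tilde{\partial}}_{\alpha }=\tilde{\partial}_{\alpha }+\phi _{\alpha }^{b}\dot{\tilde{\partial}}_{b}$ and $\Gamma (\cdots )\dot{\tilde{\partial}}^{a}=(H^{ab}\circ \varphi _{L})\dot{\tilde{\partial}}_{b}$.

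The purely vertical case is the easiest: both sides of $[\dot{\tilde{\partial}}^{a},\dot{\tilde{\partial}}^{b}]$ vanish, so $[(H^{ac}\circ \varphi _{L})\dot{\tilde{\partial}}_{c},(H^{bd}\circ \varphi _{L})\dot{\tilde{\partial}}_{d}]_{( \rho ,\eta ) TE}=0$, and the Leibniz rule with $\tilde{\rho}(\dot{\tilde{\partial}}_{c})=\partial/\partial y^{c}$ and $[\dot{\tilde{\partial}}_{c},\dot{\tilde{\partial}}_{d}]=0$ yields the last identity at once. The mixed case $[\overset{\ast }{\tilde{\partial}}_{\alpha },\dot{\tilde{\partial}}^{b}]=0$ gives the third identity by expanding $[\tilde{\partial}_{\alpha }+\phi _{\alpha }^{d}\dot{\tilde{\partial}}_{d},(H^{bc}\circ \varphi _{L})\dot{\tilde{\partial}}_{c}]$ and collecting the coefficient of $\dot{\tilde{\partial}}_{c}$. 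The top case $[\overset{\ast }{\tilde{\partial}}_{\alpha },\overset{\ast }{\tilde{\partial}}_{\beta }]$ is the richest: the image of the bracket has $\tilde{\partial}_{\gamma }$-coefficient $(L_{\alpha \beta }^{\gamma }\circ h\circ \overset{\ast }{\pi })\circ \varphi _{L}$, and matching it with the $\tilde{\partial}_{\gamma }$-coefficient $L_{\alpha \beta }^{\gamma }\circ h\circ \pi$ of the bracket of images gives the first identity; the $\dot{\tilde{\partial}}_{b}$-coefficients then give the second.

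The main obstacle is the bookkeeping inside that final coefficient comparison. On the side of the bracket of images one must expand $[\tilde{\partial}_{\alpha }+\phi _{\alpha }^{c}\dot{\tilde{\partial}}_{c},\tilde{\partial}_{\beta }+\phi _{\beta }^{d}\dot{\tilde{\partial}}_{d}]_{( \rho ,\eta ) TE}$ in full: the $\tilde{\partial}$–$\tilde{\partial}$ term feeds an $L_{\alpha \beta }^{\gamma }$ contraction, the two $\tilde{\partial}$–$\dot{\tilde{\partial}}$ terms produce the base derivatives $(\rho _{\alpha }^{i}\circ h\circ \pi )\partial _{i}\phi _{\beta }^{b}-(\rho _{\beta }^{j}\circ h\circ \pi )\partial _{j}\phi _{\alpha }^{b}$, and the $\dot{\tilde{\partial}}$–$\dot{\tilde{\partial}}$ term produces the fibre cross terms $\phi _{\alpha }^{c}\,\partial _{y^{c}}\phi _{\beta }^{b}-\phi _{\beta }^{c}\,\partial _{y^{c}}\phi _{\alpha }^{b}$. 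After substituting the abbreviation for $\phi$, applying the chain rule through $\varphi _{L}$, and using the already-established first identity to rewrite $h\circ \pi $ as $(h\circ \overset{\ast }{\pi })\circ \varphi _{L}$, these reorganize into the claimed four-term expression. No new idea is needed beyond care in not dropping or double-counting a term among the many produced by the Leibniz rule. The corollary and the classical displays then follow by specializing $(\eta ,h)=(Id_{M},Id_{M})$ and further $\rho =Id_{TM}$.
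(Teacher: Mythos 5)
Your proposal is correct and follows essentially the same route as the paper: the corollary is obtained by specializing the preceding theorem to $(\eta,h)=(Id_M,Id_M)$, and that theorem is proved exactly as you describe, by imposing bracket-preservation of $\Gamma((\rho,\eta)T\varphi_H,\varphi_H)$ on the three pairs of natural $(\rho,\eta)$-base sections $(\overset{\ast}{\tilde{\partial}}_{\alpha},\dot{\tilde{\partial}}^{a})$ and matching coefficients in the target base. Your write-up merely makes explicit the Leibniz-rule bookkeeping that the paper compresses into ``developing the equalities.''
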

\emph{In the classical case, }$( \rho ,\eta ,h) =(
Id_{TM},Id_{M},Id_{M}) $\emph{, we obtain}%
\begin{equation*}
\begin{array}{cl}
0 & =\frac{\partial }{\partial x^{i}}( \frac{\partial
^{2}H}{\partial x^{k}\partial p_{j}}\circ \varphi _{L})
-\frac{\partial }{\partial x^{k}}( \frac{\partial ^{2}H}{\partial
x^{i}\partial p_{j}}\circ \varphi _{L}) +\frac{\partial
^{2}H}{\partial x^{i}\partial p_{h}}\circ \varphi
_{L}\cdot \frac{\partial }{\partial y^{h}}( \frac{\partial ^{2}H}{%
\partial x^{k}\partial p_{j}}\circ \varphi _{L}) \\
& -\frac{\partial ^{2}H}{\partial x^{k}\partial p_{h}}\circ \varphi
_{L}\cdot \frac{\partial }{\partial y^{h}}( ( \frac{\partial ^{2}H%
}{\partial x^{i}\partial p_{j}}\circ \varphi _{L}) \circ \varphi
_{L}),
\end{array}
\label{eq131}
\end{equation*}%
\begin{equation*}
\begin{array}{cl}
0 &\!\!\!\!=\frac{\partial }{\partial x^{k}}( \frac{\partial
^{2}H}{\partial
p_{i}\partial p_{j}}\circ \varphi _{L}) +\frac{\partial ^{2}H}{%
\partial p_{i}\partial p_{h}}\circ \varphi _{L}\cdot \frac{\partial }{%
\partial y^{h}}( \frac{\partial ^{2}H}{\partial x_{k}\partial p_{j}}%
\circ \varphi _{L}) \\
& -\frac{\partial ^{2}H}{\partial p_{j}\partial p_{h}}\circ \varphi
_{L}\cdot \frac{\partial }{\partial y^{h}}( \frac{\partial ^{2}H}{%
\partial x^{k}\partial p_{i}}\circ \varphi _{L}),
\end{array}
\label{eq132}
\end{equation*}%
\emph{and}%
\begin{equation*}
\begin{array}{cl}
0 &\!\!\!\!=\frac{\partial ^{2}H}{\partial p_{i}\partial p_{k}}\circ
\varphi
_{L}\cdot \frac{\partial }{\partial y^{k}}( \frac{\partial ^{2}H}{%
\partial p_{j}\partial p_{h}}\circ \varphi _{L})-\frac{\partial ^{2}H}{\partial p_{j}\partial p_{k}}\circ \varphi
_{L}\cdot \frac{\partial }{\partial y^{k}}( \frac{\partial ^{2}H}{%
\partial p_{i}\partial p_{h}}\circ \varphi _{L}) .%
\end{array}
\label{eq133}
\end{equation*}
\begin{definition}
If $( ( \rho ,\eta ) T\varphi _{L},\varphi _{L}) $ and $( ( \rho
,\eta ) T\varphi _{H},\varphi _{H}) $ are
Lie algebroids morphisms, then we will say that $( E,\pi ,M) $%
and $( \overset{\ast }{E},\overset{\ast }{\pi },M) $  are Legendre
$( \rho ,\eta) $-equivalent and we will write
\begin{equation*}
\begin{array}{c}
( E,\pi ,M) \overset{\mathcal{L}}{\widetilde{_{( \rho ,\eta) }}}( \overset{\ast }{E},\overset{\ast }{\pi },M) .%
\end{array}%
\end{equation*}
\end{definition}
{\bf\Large Acknowledgment}\\\\
The second author would like to give his warmest thanks to Radinesti-Gorj Cultural Sciantifique Society for financial support.


\noindent
Esmail Peyghan and Leila Nourmohammadifar\\
Department of Mathematics, Faculty  of Science\\
Arak University\\
Arak 38156-8-8349,  Iran\\
Email: e-peyghan@araku.ac.ir,\ \ l.nourmohammadi@gmail.com\\

\noindent
Constantin M Arcu\c{s}\\
Secondary School "Cornelius Radu"\\
Radinesti Village, 217196\\
Gorj County, Romania\\
Email:\ c\_arcus@radinesti.ro

\end{document}